\theoremstyle{plain}
\newtheorem{thm}{\protect\theoremname}
  \theoremstyle{definition}
  \newtheorem{defn}[thm]{\protect\definitionname}
  \theoremstyle{remark}
  \newtheorem{rem}[thm]{\protect\remarkname}
  \theoremstyle{plain}
  \newtheorem{lem}[thm]{\protect\lemmaname}
  \theoremstyle{plain}
  \newtheorem{prop}[thm]{\protect\propositionname}
  \theoremstyle{plain}
  \newtheorem{cor}[thm]{\protect\corollaryname}
\setlist[enumerate,1]{label=(\roman*)}  
\newcommand{\sign}{{\operatorname{sign}}}
\DeclareMathOperator{\E}{{\mathbb E}}
\DeclareMathOperator{\EVaR}{\mathsf {EV@R}}
\DeclareMathOperator{\AVaR}{\mathsf {AV@R}}
\DeclareMathOperator{\VaR}{\mathsf {V@R}}
\DeclareMathOperator{\one}{{\mathds 1}} 		
\DeclareMathOperator{\esssup}{\operatornamewithlimits{ess\,sup}}
\DeclareMathOperator*{\argmin}{arg\,min}
  \providecommand{\corollaryname}{Corollary}
  \providecommand{\definitionname}{Definition}
  \providecommand{\lemmaname}{Lemma}
  \providecommand{\propositionname}{Proposition}
  \providecommand{\remarkname}{Remark}
\providecommand{\theoremname}{Theorem}
\begin{document}

\title{Entropy Based Risk Measures }

\author{Alois Pichler\thanks{Both authors: Technische Universität Chemnitz, Fakultät für Mathematik.
90126 Chemnitz, Germany.}  \and  Ruben Schlotter\thanks{Corresponding author. Contact: \protect\href{mailto:ruben.schlotter@math.tu-chemnitz.de}{ruben.schlotter@math.tu-chemnitz.de}}}
\maketitle
\begin{abstract}
Entropy is a measure of self-information which is used to quantify
information losses. Entropy was developed in thermodynamics, but is
also used to compare probabilities based on their deviating information
content. Corresponding model uncertainty is of particular interest
and importance in stochastic programming and its applications like
mathematical finance, as complete information is not accessible or
manageable in general.

This paper extends and generalizes the Entropic Value-at-Risk by involving
Rényi entropies. We provide explicit relations of different entropic
risk measures, we elaborate their dual representations and present
their relations explicitly. 

We consider the largest spaces which allow studying the impact of
information in detail and it is demonstrated that these do not depend
on the information loss. The dual norms and Hahn\textendash Banach
functionals are characterized explicitly. 

\textbf{Keywords:} Risk Measures, Rearrangement Inequalities, Stochastic
Dominance, Dual Representation

\textbf{Classification:} 90C15, 60B05, 62P05
\end{abstract}

\section{Introduction}

\citet{boltzmann1877-R} defines the entropy of a thermodynamic system
as a measure of how many different microstates could give rise to
the macrostate the system is in. He gives the famous formula
\[
{\displaystyle S=k\log W}
\]
for the entropy of the thermodynamic system, where $S$ is the entropy
of the macrostate, $k$ is Boltzmann's constant and $W$ is the total
number of possible microstates that might yield the macrostate. It
then becomes natural to interpret entropy as a measure of disorder.
\citet{Shannon1948-R} defines the \emph{information entropy} of a
discrete random variable $Z$ with probability mass function $f(\cdot)$
by 
\[
H(Z)=\sum_{x}f(x)\log f(x),
\]
which  extends to
\[
H(Z)=\E Z\log Z
\]
in the continuous case. Information entropy is interpreted as the
average amount of information produced by the probabilistic source
of data~$Z$. Relating this to Boltzmann's entropy formula one can
say that information entropy of a system is the amount of information
needed to determine a microstate, given the macrostate. Many extensions
of information entropy (now often called Shannon entropy) have been
introduced. The most notable extensions are \emph{Rényi entropies
of order $q$, }specified as 
\[
H_{q}(Z):=\frac{1}{q-1}\log\E Z^{q}.
\]

Related to Shannon entropy is the quantity
\[
D\left(Q\|P\right)=\sum_{x}f\left(x\right)\log\frac{f\left(x\right)}{g(x)}
\]
called \emph{relative entropy} or \emph{Kullback\textendash Leibler
divergence}. Here, $f\left(\cdot\right)$ ($g\left(\cdot\right)$,
resp.) is the probability mass functions of the probability measure
$P$ ($Q$, resp.). Relative entropy describes the information loss
when considering the distribution $g$ while $f$ is the correct distribution.
Divergence can therefore be seen as a distance of probability measures,
although it is not a metric since it is neither symmetric nor does
it satisfy the triangle inequality. Divergences can be derived from
different entropies in analogy to relative entropy. For Rényi entropy
we obtain Rényi divergences given by 
\begin{align*}
D_{R}^{q}\left(Q\|P\right) & =\frac{1}{q-1}\log\E Z^{q}=H_{q}(Z),
\end{align*}
where $Z=\frac{\mathrm{d}Q}{\mathrm{d}P}$ is the Radon\textendash Nikodým
derivative. The family of Rényi divergences is related to Hellinger
divergences defined as
\[
D_{T}^{q}(Q\|P)=\frac{1}{q-1}\E\left(Z^{q}-1\right)
\]
(see \citet{Liese2006-R}), where $Z$ is as above. Hellinger divergence
is sometimes called Tsallis divergence.

For an overview of entropy in information theory we may refer to \citet{Cover2006-R}.
For the relationship between different divergences see \citet{amari2010-R},
\citet{Amari2009-R} and \citet{Liese2006-R}. For information specific
to Rényi divergence we refer the interested reader to \citet{vanErven2014-R}. 

Risk measures play an important role in finance, stochastic optimization,
e.g. In fact, in risk-averse stochastic optimization one is often
interested in problems of the form
\begin{align}
\text{minimize } & \rho\ \big(c(x,Y)\big)\label{eq:stochOpt}\\
\text{subject to } & x\in\mathbb{X},\nonumber 
\end{align}
where $\rho$ is a risk measure, $c$ is a cost function of a random
variable $Y$ and $\mathbb{X}$ is a decision space. In this paper
we focus our attention on risk measures based on entropy. In particular,
we address generalizations of the Entropic Value-at-Risk $(\EVaR),$
a coherent risk measure introduced in~\citet{AhmadiJavidEVaR-R,AhmadiJavidEVaR2-R}.
It is known that $\EVaR$ can be represented using relative entropy
or Kullback\textendash Leibler divergence as 
\begin{equation}
\EVaR_{\alpha}(Y)=\sup\left\{ \E YZ\colon Z\geq0,\,\E Z=1,\,\E Z\log Z\leq\log\frac{1}{1-\alpha}\right\} ,\label{eq:EVaRsup}
\end{equation}
where $Z$ is a density with respect to the reference probability
measure $P$. This risk measure corresponds to the worst expected
value with respect to probability measures with information content
not deviating by more than $\log\frac{1}{1-\alpha}$ from the baseline
distribution $P$. 

\medskip{}

In this paper we generalize the risk measure~\eqref{eq:EVaRsup}
and consider new risk measures by replacing the relative entropy in
the dual representation~\eqref{eq:EVaRsup} with different divergences
as suggested in \citet{AhmadiJavidEVaR-R} first. \citet{Breuer2013-R,Breuer2013a-R}
study the class of $\phi$-entropic risk measures (cf.\ also \citet{Bellini2008986-R}).
\citet{KovacevicBreuer2016-R} study a multiperiod extension of~\eqref{eq:EVaRsup}
while \citet{Foellmer2011-R} study a version of an entropic risk
measure too, but in a very different conceptual setting.

Extending this direction of research involving information losses
motivates studying risk measures of the form
\begin{equation}
\rho(Y)=\sup\left\{ \E YZ\colon Z\geq0,\,\E Z=1,\,H_{q}(Z)\leq\log\frac{1}{1-\alpha}\right\} ,\label{eq:RVaRsup}
\end{equation}
where $H_{q}$ is the Rényi entropy. For $q>1$ these risk measures
are related to \emph{higher order dual risk measures} considered in
\citet{Dentcheva2010-R}. Incorporating entropy into the definition
of the risk measure allows a consistent interpretation based on information
theory. 

The new class of risk measures based on Rényi divergence recovers
well-known coherent risk measures such as the Average Value-at-risk
($\AVaR)$, the expectation and the essential supremum, the classical
Entropic Value-at-Risk is a special case as well. In fact, our new
class of entropy based risk measures interpolates the Average Value-at-Risk
and the classical Entropic Value-at-Risk. This provides a flexible
class of risk measures for modeling stochastic optimization problems
taking information into account.

Returning to the standard problem~\eqref{eq:stochOpt} in risk-averse
stochastic optimization we notice that for risk measures of the form~\eqref{eq:RVaRsup}
the problem~\eqref{eq:stochOpt} becomes a minimax problem which
in general is difficult. However, extending the work of \citet{Dentcheva2010-R}
we provide an equivalent infimum representation of~\eqref{eq:RVaRsup}
which facilitates the problem.

We further study the norms associated with the risk measure $\rho$
defined as 
\begin{equation}
\left\Vert Y\right\Vert _{\rho}\coloneqq\rho\left(\left|Y\right|\right)\label{eq:7-3}
\end{equation}
and their corresponding dual norms. We give explicit characterizations
for the associated Hahn\textendash Banach functionals.

\paragraph{Mathematical Setting.}

We consider a vector space $L$ of $\mathbb{R}$-valued random variables
on a reference probability space $\left(\Omega,\,\mathcal{F},\,P\right)$.
The set $L$ is called $\textit{model space},$ which is used in this
paper to represent a set of random losses. This setting is typical
in stochastic optimization as well as in the insurance literature,
while in a finance context one is primarily interested in monetary
gains instead of losses. Throughout this paper we work with random
variables in $L^{p}(\Omega,\,\mathcal{F},\,P)$ for $p\geq1$ or $L^{\infty}(\Omega,\,\mathcal{F},\,P)$.
We shall call exponents $p$ and $q$ \emph{conjugate} (\emph{Hölder
conjugate}, resp.), if $\nicefrac{1}{p}+\nicefrac{1}{q}=1$, even
for $p<1$ or $q<1$. Typically, we denote the exponent conjugate
to $p$ by $p^{\prime}$. As usual, we set $p^{\prime}=\infty$ for
$p=1$ and $p^{\prime}=1$ for $p=\infty$. 

\medskip{}

A risk measure $\rho\colon L\to\mathbb{R}\cup\left\{ \infty\right\} $
is called $\textit{coherent}$ if it satisfies the following four
properties introduced by \citet{Artzner1999-R}.
\begin{enumerate}[noitemsep]
\item Translation equivariance: $\rho\left(Y+c\right)=\rho(Y)+c$ for any
$Y\in L$ and $c\in\mathbb{R}$;
\item Subadditivity: $\rho\left(Y_{1}+Y_{2}\right)\leq\rho\left(Y_{1}\right)+\rho\left(Y_{2}\right)$
for all $Y_{1},\,Y_{2}\in L$;
\item Monotonicity: if $Y_{1},\,Y_{2}\in L$ and $Y_{1}\leq Y_{2},$ then
$\rho\left(Y_{1}\right)\leq\rho\left(Y_{2}\right)$;
\item Positive homogeneity: $\rho\left(\lambda Y\right)=\lambda\rho(Y)$
for all $Y\in L$ and $\lambda>0$.
\end{enumerate}
\citet{Delbaen02coherentrisk-R} gives a general representation of
coherent risk measures in the form 
\begin{equation}
\rho(Y)=\sup\left\{ \E_{Q}Y\colon Q\in\Gamma\right\} ,\label{eq:DelbaenRep}
\end{equation}
where $\Gamma$ is a set of probability measures satisfying certain
regularity conditions. Risk measures considered in~\eqref{eq:RVaRsup}
are thus coherent. 

\paragraph{Outline of the paper.}

\noindent Section~\ref{sec:Entropy} introduces and discusses important
properties of the Rényi entropy, Section~\ref{sec:RM} then introduces
the new risk measure $\EVaR$ and discusses its domain. In Section~\ref{sec:Duality}
we give the dual representation while Section~\ref{sec:Analytic-prop}
relates $\EVaR$-norms with the norm in $L^{p}$. This section discusses
the relation of different levels and orders as well. Section~\ref{sec:Dual-Norms}
deals with the dual norms and the Kusuoka representation, Section~\ref{sec:Summary}
concludes.

\section{\label{sec:Entropy}Entropy}

This section defines the Rényi entropy and features important properties
which we relate to in later sections.
\begin{defn}
We shall call a random variable $Z\in L^{1}$ with $Z\ge0$ and $\E Z=1$
a \emph{density} with respect to $P$, or simply density.
\end{defn}

\begin{defn}
\emph{\label{def:Renyi} }The \emph{Rényi entropy}\footnote{Named after Alfréd Rényi, 1921\textendash 1970, Hungarian mathematician}
of order $q\in\mathbb{R}$ of a density $Z$ is\footnote{We employ the analytic continuation of the mapping $z\mapsto z\cdot\log z$
by setting $0\log0:=0$.}
\begin{equation}
H_{q}(Z):=\begin{cases}
-\log P(Z>0) & \text{if }q=0,\\
\E Z\log Z & \text{if }q=1,\\
\log\left\Vert Z\right\Vert _{\infty} & \text{if }q=\infty,\\
\frac{1}{q-1}\log\E Z^{q} & \text{else},
\end{cases}\label{eq:DefRenyi}
\end{equation}
provided that the expectations are finite (note that one has to assume
$Z>0$ in order to have $H_{q}(Z)$ well-defined for $q<0$).
\end{defn}

Rényi entropy, as introduced in~\eqref{eq:DefRenyi}, is continuous
in $q\in\mathbb{R}$. Indeed, by l'Hôpital's rule we have 
\begin{equation}
\lim_{q\to1}H_{q}(Z)=\lim_{q\to1}\frac{\E Z^{q}\log Z}{\E Z^{q}}=\E Z\log Z=H_{1}(Z),\label{eq:H1}
\end{equation}
so that the entropy of order $q=1$ in~\eqref{eq:DefRenyi} is the
continuous extension of $\frac{1}{q-1}\log\E Z^{q}$. Furthermore,
for $q\to\infty$, it holds that 
\[
\lim_{q\to\infty}H_{q}(Z)=\lim_{q\to\infty}\frac{q}{q-1}\log\left\Vert Z\right\Vert _{q}=\log\left\Vert Z\right\Vert _{\infty}.
\]
For $q\to0$ we get 
\[
\lim_{q\to0}H_{q}(Z)=\lim_{q\to0}\frac{1}{q-1}\log\int_{\Omega}\one_{Z>0}Z^{q}\,\mathrm{d}P=-\log\,P\left\{ Z>0\right\} 
\]
and hence the case $q=0$ in Definition~\ref{def:Renyi} is consistent
as well.

\begin{rem}
\label{rem: norm-ent}The expression
\[
\left\Vert Z\right\Vert _{q}=\left(\E Z^{q}\right)^{\frac{1}{q}}
\]
is not a norm whenever $q<1$, but we will employ it to allow for
a compact notation. With this notation at hand the Rényi entropy rewrites
as 
\begin{equation}
H_{q}(Z)=\frac{q}{q-1}\log\left\Vert Z\right\Vert _{q}.\label{eq:Zq}
\end{equation}
\end{rem}

\subsection*{Properties of Rényi Entropy}

The entropy $H_{q}(Z)$ is nonnegative for $q\geq1$ as we have that
$\left\Vert Z\right\Vert _{q}\geq\E Z=1$. For $0<q<1$ the exponents
$\frac{1}{q}$ and $\frac{1}{1-q}$ are conjugate so that by Hölder's
inequality $\E Z^{q}\le\left\Vert Z^{q}\right\Vert _{\frac{1}{q}}\cdot\left\Vert \one\right\Vert _{\frac{1}{1-q}}=\E Z=1$
and consequently $H_{q}(Z)=\frac{1}{q-1}\log\E Z^{q}\geq0$ even for
$q\in(0,1)$. Together with the special case $q=0$ we thus have that
the entropy is nonnegative for all $q\ge0$, 
\[
H_{q}(Z)\ge0\qquad(q\ge0).
\]

The elementary relation $P(Z>0)\cdot\left\Vert Z\right\Vert _{\infty}\ge1$
follows from $\E Z=1$ and consequently we have that $H_{0}(Z)\le H_{\infty}(Z)$.
The next lemma reveals the general monotonic behavior of the Rényi
entropy in its order $q$.
\begin{lem}
\label{lem:R=0000E9nyi-Entropy-Mono} The Rényi entropy $H_{q}(Z)$
is non-decreasing in its order $q$ for every $Z$ fixed. Further,
there exists a density $Z$ with arbitrary entropy so that $H_{q}(Z)$
is constant for $q\ge0$.
\end{lem}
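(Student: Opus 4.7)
The plan is to reduce the first assertion to the well-known slope-monotonicity of convex functions. Define
\[
\psi(q) := \log \E Z^q.
\]
This is the cumulant generating function of the random variable $\log Z$ (with $0\cdot\log 0 := 0$), so $\psi$ is convex in $q$ on its effective domain; this follows from Hölder's inequality applied to $Z^{tq_1}\cdot Z^{(1-t)q_2}$. Furthermore, since $Z$ is a density, $\psi(1) = \log \E Z = 0$. Using the representation from Remark~\ref{rem: norm-ent} one rewrites
\[
H_q(Z) \;=\; \frac{\psi(q) - \psi(1)}{q-1}
\]
for $q \neq 1$, which is exactly the slope of a chord of the convex function $\psi$ anchored at the point $(1,0)$. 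For any convex function the chord slope from a fixed base point is non-decreasing, and this gives monotonicity of $H_q(Z)$ for all $q \neq 1$ in the admissible range. The missing values $q \in \{0,1,\infty\}$ are handled by the continuity computations in~\eqref{eq:H1} and the two displays following it, which already appear in the excerpt.

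For the second claim I would construct an explicit Bernoulli-type density. Given any target value $h \geq 0$, set $p := e^{-h} \in (0,1]$, pick an event $A \in \mathcal{F}$ with $P(A) = p$ (assuming the probability space supports such an event; otherwise one takes $p$ in the range of $P$), and let
\[
Z := \frac{1}{p}\,\one_A.
\]
Then $\E Z = 1$, so $Z$ is a density, and for every $q > 0$ with $q \neq 1$ one computes $\E Z^q = p \cdot p^{-q} = p^{1-q}$, hence
\[
H_q(Z) \;=\; \frac{1}{q-1}\log p^{1-q} \;=\; -\log p \;=\; h.
\]
The edge cases are immediate: $H_0(Z) = -\log P(Z>0) = -\log p$, $H_1(Z) = \E Z\log Z = -\log p$, and $H_\infty(Z) = \log\|Z\|_\infty = -\log p$. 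Thus $H_q(Z) \equiv h$ for all $q \geq 0$, and since $h$ was arbitrary the claim follows.

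The only real obstacle is the convexity of $\psi$, but this is classical; once it is in place, the slope-monotonicity argument is elementary and the explicit construction in the second part is straightforward. If the probability space happens not to contain an event of every prescribed mass $p$, one can alternatively enlarge $\Omega$ by a product with a uniform variable, or note that the lemma only claims existence \emph{of} a density with arbitrary constant entropy — so we are free to work on a suitably rich atomless space.
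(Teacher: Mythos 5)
Your proof is correct, but it takes a different route to the monotonicity than the paper does. The paper differentiates $H_q(Z)$ with respect to $q$ and rewrites the derivative as $\frac{1}{(q-1)^2}\,\E Z_q\log\frac{Z_q}{Z}$ with the power density $Z_q=Z^q/\E Z^q$, i.e.\ as a (scaled) Kullback--Leibler divergence, whose nonnegativity is then checked via the Bregman divergence of $\varphi(z)=z\log z$; see~\eqref{eq:7}. You instead observe that $\psi(q)=\log\E Z^q$ is convex (the same H\"older argument the paper uses later, in Proposition~\ref{prop:Convexity}) and that $\psi(1)=0$ because $Z$ is a density, so that $H_q(Z)=\frac{\psi(q)-\psi(1)}{q-1}$ is a chord slope of a convex function anchored at $(1,0)$, hence non-decreasing; the exceptional orders $q\in\{0,1,\infty\}$ follow from the continuity statements around~\eqref{eq:H1}. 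Your argument buys something: it needs no differentiation under the expectation sign (which the paper performs without justification) and it works verbatim at points where $\psi$ fails to be differentiable, while also tying the lemma directly to Proposition~\ref{prop:Convexity}. What the paper's route buys in exchange is the explicit derivative formula~\eqref{eq:7}, identifying the local rate of increase of $H_q$ as a divergence, which is of independent interpretive interest. For the second assertion your construction $Z=\frac1p\one_A$ with $P(A)=p=e^{-h}$ is the same scaled-indicator density the paper uses (there parametrized by $P(Z=0)=\alpha$, $P(Z=\frac{1}{1-\alpha})=1-\alpha$), and your caveat about the probability space supporting events of arbitrary mass is a reasonable addition that the paper leaves implicit; note only that such a $Z$ vanishes on a set of positive measure, so constancy indeed holds only for $q\ge 0$, exactly as claimed.
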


\begin{proof}
The derivative of~\eqref{eq:DefRenyi}  with respect to the order
$q$ is 
\begin{align*}
\frac{\mathrm{d}}{\mathrm{d}q}H_{q}(Z) & =-\frac{1}{\left(q-1\right){}^{2}}\log\E Z^{q}+\frac{1}{q-1}\frac{\E Z^{q}\log Z}{\E Z^{q}},
\end{align*}
which can be restated as 
\begin{align}
\frac{\mathrm{d}}{\mathrm{d}q}H_{q}(Z) & =-\frac{1}{\left(q-1\right){}^{2}}\log\E Z^{q}+\frac{q-1}{\left(q-1\right){}^{2}}\E\frac{Z^{q}}{\E Z^{q}}\log Z\nonumber \\
 & =\frac{1}{\left(q-1\right){}^{2}}\E\frac{Z^{q}}{\E Z^{q}}\log\frac{1}{Z}\frac{Z^{q}}{\E Z^{q}}=\frac{1}{\left(q-1\right){}^{2}}\E Z_{q}\log\frac{Z_{q}}{Z},\label{eq:7}
\end{align}
where we employ the abbreviation $Z_{q}$ for the power-density $Z_{q}:=\frac{Z^{q}}{\E Z^{q}}$.
In line with the proof that the Kullback\textendash Leibler divergence
is non-negative we consider the Bregman divergence 
\[
D(y,z):=\varphi(y)-\varphi(z)-\varphi^{\prime}(z)\cdot\left(y-z\right).
\]
For the convex function $\varphi(z):=z\log z$ (with derivative $\varphi^{\prime}(z)=1+\log z$)
we get by convexity
\[
0\le D\left(Z_{q},Z\right)=Z_{q}\log Z_{q}-Z\log Z-\left(1+\log Z\right)\cdot\left(Z_{q}-Z\right).
\]
Taking expectations and expanding gives 
\[
\E Z_{q}\log\frac{Z_{q}}{Z}=\E\left[Z_{q}\log Z_{q}-Z\log Z-Z_{q}\log Z+Z\log Z\right]\ge0.
\]
It follows from~\eqref{eq:7} that $\frac{\mathrm{d}}{\mathrm{d}q}H_{q}(Z)\ge0$
and thus the assertion.

Now consider the random variable $Z$ with $\alpha\in\left(0,1\right)$
and $P\left(Z=0\right)=\alpha\text{ and }P\left(Z=\frac{1}{1-\alpha}\right)=1-\alpha.$
The random variable $Z$ is a density with entropy 
\[
H_{q}(Z)=\frac{1}{q-1}\log\frac{1-\alpha}{\left(1-\alpha\right){}^{q}}=\log\frac{1}{1-\alpha},
\]
which is independent of the order $q$. 
\end{proof}
\begin{rem}
\label{rem:Pos-Entr}From Remark~\ref{rem: norm-ent} it is clear
that we have $H_{0}(Z)=0$ for $Z>0$ and hence Lemma~\ref{lem:R=0000E9nyi-Entropy-Mono}
implies that 
\[
H_{q}(Z)\leq0
\]
for $q<0$.
\end{rem}

We state convexity properties of the \emph{Rényi} entropy for varying
order $q$ next. 
\begin{prop}[Convexity]
\label{prop:Convexity}The mapping $q\mapsto\left(q-1\right)\cdot H_{q}(Z)$
is a convex function on $\mathbb{R}$.
\end{prop}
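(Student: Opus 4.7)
The plan is to first rewrite the function in a form that is uniform in $q$ and then recognize it as a cumulant/log-Laplace transform, whose convexity is classical and follows from H\"older's inequality.

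First, I would observe that for every $q\in\mathbb{R}$ the quantity $(q-1)\,H_{q}(Z)$ coincides with $\log\E Z^{q}$. For $q\neq 0,1$ this is just the definition~\eqref{eq:DefRenyi}. For $q=1$ both sides are $0$, and for $q=0$, using the convention $0^{0}:=0$ built into the analytic continuation, one has $\E Z^{0}=P(Z>0)$, so $(0-1)\,H_{0}(Z)=\log P(Z>0)=\log\E Z^{0}$. Thus it suffices to prove that
\[
f(q):=\log\E Z^{q}
\]
is convex on $\mathbb{R}$ (with the usual convention that $f(q)=+\infty$ where $\E Z^{q}$ is not finite, which is still consistent with convexity).

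The key step is H\"older's inequality. Given $q_{0},q_{1}\in\mathbb{R}$ and $\lambda\in(0,1)$, set $q_{\lambda}:=\lambda q_{1}+(1-\lambda)q_{0}$. Writing $Z^{q_{\lambda}}=Z^{\lambda q_{1}}\cdot Z^{(1-\lambda)q_{0}}$ (on the set $\{Z>0\}$, with the analytic convention $0^{q_{\lambda}}=0$ whenever $q_{\lambda}>0$, and in the remaining case $q_{0},q_{1}\le 0$ one already needs $Z>0$ a.s.\ for the entropies to be defined), H\"older with conjugate exponents $1/\lambda$ and $1/(1-\lambda)$ gives
\[
\E Z^{q_{\lambda}}\le\bigl(\E Z^{q_{1}}\bigr)^{\lambda}\bigl(\E Z^{q_{0}}\bigr)^{1-\lambda}.
\]
Taking logarithms yields $f(q_{\lambda})\le\lambda f(q_{1})+(1-\lambda)f(q_{0})$, which is the desired convexity.

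The only subtlety I would need to address is the handling of boundary/degenerate cases: the point $q=0$ (covered by $0^{0}=0$ and the $H_{0}$ formula as noted), the point $q=1$ (covered by continuity, since $f(1)=0$ and the inequality degenerates correctly), and the issue of $Z$ vanishing on a set of positive measure when one endpoint $q_{i}$ is negative, where I would simply note that the inequality is then trivial because the right-hand side is $+\infty$. Beyond that, there is no real obstacle: the result is the well-known convexity of the cumulant generating function $\log\E e^{q\log Z}$ in~$q$, recast in multiplicative form.
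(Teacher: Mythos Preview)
Your proof is correct and follows essentially the same approach as the paper: both arguments reduce to showing that $q\mapsto\log\E Z^{q}$ is convex by factoring $Z^{q_\lambda}=Z^{(1-\lambda)q_0}\cdot Z^{\lambda q_1}$ and applying H\"older's inequality with conjugate exponents $\tfrac{1}{1-\lambda}$ and $\tfrac{1}{\lambda}$. Your version is a little more explicit about the identification $(q-1)H_q(Z)=\log\E Z^{q}$ at the special points $q=0,1$ and about the degenerate cases, but the core idea is identical.
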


\begin{proof}
For $\lambda\in\left(0,1\right)$ and $q_{0}$, $q_{1}\in\mathbb{R}$
define $q_{\lambda}:=\left(1-\lambda\right)q_{0}+\lambda q_{1}$.
By Hölder's inequality we have for arbitrary $q$ (such that every
integral exists)
\begin{equation}
\log\E Z^{q_{\lambda}}=\log\E\left(Z^{q}\cdot Z^{q_{\lambda}-q}\right)\le\frac{1}{p}\log\left(\E Z^{qp}\right)+\frac{1}{p^{\prime}}\log\left(\E Z^{\left(q_{\lambda}-q\right)p^{\prime}}\right),\label{eq:5-2}
\end{equation}
where $p^{\prime}=\frac{p}{p-1}$ is Hölder's conjugate exponent to
$p$. 

Choose $p:=\frac{1}{1-\lambda}$ and $q:=\left(1-\lambda\right)q_{0}$
and observe that $qp=q_{0}$, $p^{\prime}=\frac{1}{\lambda}$ and
$\left(q_{\lambda}-q\right)p^{\prime}=q_{1}$. The inequality~\eqref{eq:5-2}
thus reads
\[
\log\E Z^{q_{\lambda}}\le\left(1-\lambda\right)\log\left(\E Z^{q_{0}}\right)+\lambda\log\left(\E Z^{q_{1}}\right),
\]
from which the assertion follows.
\end{proof}
The preceding Proposition~\ref{prop:Convexity} extends to the case
$q=\infty$ in the following way.
\begin{prop}
\label{prop: Hqconvex}For $q,\tilde{q}\in\mathbb{R}$ and $q<\tilde{q}$
it holds that 
\[
\left(\tilde{q}-1\right)H_{\tilde{q}}(Z)\le\left(q-1\right)H_{q}(Z)+\left(\tilde{q}-q\right)H_{\infty}(Z)
\]
whenever the integrals are well defined. 
\end{prop}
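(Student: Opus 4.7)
The plan is to reduce the claimed inequality to a pointwise bound on $Z$. Unwinding the definitions, with $H_q(Z)=\frac{1}{q-1}\log\E Z^{q}$ and $H_{\infty}(Z)=\log\|Z\|_{\infty}$, the assertion is equivalent to
\[
\log\E Z^{\tilde q}\le \log\E Z^{q}+(\tilde q-q)\log\|Z\|_{\infty},
\]
i.e., $\E Z^{\tilde q}\le \|Z\|_{\infty}^{\tilde q-q}\,\E Z^{q}$. Since $\tilde q-q>0$ and $0\le Z\le\|Z\|_{\infty}$ almost surely, I would split $Z^{\tilde q}=Z^{\tilde q-q}\cdot Z^{q}\le \|Z\|_{\infty}^{\tilde q-q}\cdot Z^{q}$ pointwise and integrate to obtain the required inequality. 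Taking logarithms then yields the claim, with the case $\|Z\|_{\infty}=\infty$ (i.e.\ $H_{\infty}(Z)=\infty$) being trivial and the case $H_{\infty}(Z)<\infty$ handled as above.

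As a sanity check, the same inequality can be recovered as a limiting case of Proposition~\ref{prop:Convexity}: for finite $q<\tilde q<q'$, convexity of $q\mapsto (q-1)H_{q}(Z)$ gives
\[
(\tilde q-1)H_{\tilde q}(Z)\le\frac{q'-\tilde q}{q'-q}(q-1)H_{q}(Z)+\frac{\tilde q-q}{q'-q}(q'-1)H_{q'}(Z).
\]
Using $(q'-1)H_{q'}(Z)=\log\E Z^{q'}=q'\log\|Z\|_{q'}$ and $\|Z\|_{q'}\to\|Z\|_{\infty}$ as $q'\to\infty$, the second term on the right converges to $(\tilde q-q)\,H_{\infty}(Z)$, while the first term converges to $(q-1)H_{q}(Z)$, giving the same bound. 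This alternative shows the statement is the natural extension of Proposition~\ref{prop:Convexity} to the endpoint $q=\infty$.

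There is essentially no hard step; the only mild care needed is handling domain issues (ensuring $\E Z^{q}$ and $\E Z^{\tilde q}$ are finite when $\|Z\|_{\infty}<\infty$, which is automatic) and accommodating the sign of $q-1$ so that the reformulation in terms of $\log\E Z^{q}$ is faithful. I would therefore present the direct pointwise argument as the main proof and optionally note the convexity-limit interpretation as a remark.
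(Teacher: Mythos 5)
Your proof is correct and essentially the same as the paper's: the paper also reduces the claim to $\E Z^{\tilde q}\le \left\Vert Z\right\Vert _{\infty}^{\tilde q-q}\,\E Z^{q}$ (phrased as an application of H\"older's inequality with the $L^{\infty}$-bound) and then takes logarithms. The convexity-limit remark is a nice complementary observation but not part of the paper's argument.
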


\begin{proof}
Again by Hölder's inequality we have for $q<\tilde{q}$ that $\E Z^{\tilde{q}}\le\E\left(Z^{q}\cdot\left\Vert Z\right\Vert _{\infty}^{\tilde{q}-q}\right).$
Thus 
\[
\log\E Z^{\tilde{q}}\le\log\E Z^{q}+\left(\tilde{q}-q\right)\log\left\Vert Z\right\Vert _{\infty},
\]
i.e., 
\[
\left(\tilde{q}-1\right)H_{\tilde{q}}(Z)\le\left(q-1\right)H_{q}(Z)+\left(\tilde{q}-q\right)H_{\infty}(Z),
\]
which is the assertion.
\end{proof}

\section{\label{sec:RM}Risk measures based on Rényi entropy}

We now define entropic risk measures based on Rényi entropy. We start
from the dual representation of coherent risk measures first introduced
in~\citet{Delbaen02coherentrisk-R}. The constant $\log\frac{1}{1-\alpha}$
in the definition below is chosen to relate the entropic risk measures
to the Average Value-at-Risk and to the Value-at-Risk with confidence
level $\alpha.$
\begin{defn}[Risk measures based on Rényi entropy]
\label{def:Entropic}The Entropic Value-at-Risk $\EVaR_{\alpha}^{p}$
of order $p\in\mathbb{R}$ at confidence level $\alpha\in\left[0,1\right)$
and $Y\in L^{p}$ based on Rényi entropy is 
\begin{equation}
\EVaR_{\alpha}^{p}(Y):=\sup\left\{ \E YZ\colon Z\ge0,\,\E Z=1\text{ and }H_{p^{\prime}}(Z)\le\log\frac{1}{1-\alpha}\right\} ,\label{eq: EVaRDef1}
\end{equation}
where $\frac{1}{p}+\frac{1}{p^{\prime}}=1.$ For $p=\infty$ we set
$\EVaR_{\alpha}^{\infty}(Y)\coloneqq\EVaR_{\alpha}(Y)$ (cf.~\eqref{eq:H1}),
i.e., 
\begin{equation}
\EVaR_{\alpha}(Y):=\sup\left\{ \E YZ\colon Z\ge0,\,\E Z=1,\,\E Z\log Z\le\log\frac{1}{1-\alpha}\right\} \label{eq: EVaRDef2}
\end{equation}
and for $p=1$
\[
\EVaR_{\alpha}^{1}(Y)=\AVaR_{\alpha}(Y):=\sup\left\{ \E YZ\colon Z\geq0,\,\E Z=1,\,Z\leq\frac{1}{1-\alpha}\right\} .
\]
For $\alpha=1$ we set $\EVaR_{1}^{p}(Y):=\esssup Y.$
\end{defn}

\begin{rem}[The confidence level $\alpha=0$]
The Entropic Value-at-Risk based on Rényi entropy is nondecreasing
in $\alpha$, as $\alpha\mapsto\log\frac{1}{1-\alpha}$ is an increasing
function. Also note that $\log\frac{1}{1-\alpha}=0$ whenever $\alpha=0$,
and $H_{p^{\prime}}(Z)=0$ if and only if $Z=\one$ whenever $p^{\prime}\ge0$.
Hence, $\EVaR_{0}^{p}(Y)=\E Y$ for $p\not\in(0,1)$. Theorem~\ref{thm:RVaR(p<1)}
below addresses the case $p\in(0,1)$. 
\end{rem}

For $p>1$, the risk measure $\EVaR_{\alpha}^{p}\left(\cdot\right)$
is well defined on $L^{p}$ since
\begin{equation}
\EVaR_{\alpha}^{p}(Y)\leq\left\Vert Y\right\Vert _{p}\left(\frac{1}{1-\alpha}\right)^{\frac{1}{p}}.\label{eq:7-1}
\end{equation}
An in-depth discussion of this case can be found in \citet{Dentcheva2010-R}.

\medskip{}

This paper particularly extends the Entropic Value-at-Risk for $p<1$.
 To this end it is useful to revise the Hölder and Minkowski inequality
for $p<1.$ Since the inequalities in both cases are reversed, they
are sometimes called \emph{reverse Hölder} and \emph{reverse Minkowski
inequality}, respectively.
\begin{lem}[Reverse Hölder and reverse Minkowski inequality]
For $p\in(0,1)$ and $Y,\,Z\in L^{\infty}$ with $Z>0$ the inequality
\begin{equation}
\left\Vert YZ\right\Vert _{1}\geq\left\Vert Y\right\Vert _{p}\left\Vert Z\right\Vert _{q}\label{eq:5}
\end{equation}
holds true, where $q=\frac{p}{p-1}<0$ is the Hölder exponent conjugate
to $p$. 

For $Z_{1},\,Z_{2}\in L^{\infty}$ such that $Z_{1},\,Z_{2}>0$ and
$q<1$ we have
\[
\left\Vert Z_{1}+Z_{2}\right\Vert _{q}\geq\left\Vert Z_{1}\right\Vert _{q}+\left\Vert Z_{2}\right\Vert _{q}.
\]
\end{lem}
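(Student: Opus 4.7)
The plan is to derive both inequalities from the \emph{standard} Hölder inequality by a change of variables that moves all exponents back into $[1,\infty)$, where Hölder applies in its usual form.

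For the reverse Hölder inequality, I would write $Y=(YZ)\cdot Z^{-1}$ (legitimate because $Z>0$) and then raise to the $p$th power, so that $|Y|^p=|YZ|^p\cdot|Z|^{-p}$. I then apply ordinary Hölder to this product with the exponents $a=\nicefrac{1}{p}$ and $b=\nicefrac{1}{(1-p)}$, both strictly larger than $1$ since $p\in(0,1)$, and satisfying $\nicefrac{1}{a}+\nicefrac{1}{b}=1$. The resulting bound reads $\E |Y|^p\le (\E |YZ|)^p\bigl(\E|Z|^{-p/(1-p)}\bigr)^{1-p}$. The key arithmetic checks are the identities $-p/(1-p)=q$ and $q(1-p)=-p$, which together show that the second factor equals precisely $\|Z\|_q^{-p}$. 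Taking $p$th roots, which does not reverse the inequality because $p>0$, and rearranging then yields $\|YZ\|_1\ge\|Y\|_p\|Z\|_q$.

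For the reverse Minkowski inequality, I would mimic the classical Riesz derivation but feed in the reverse Hölder inequality just established. Starting from the identity $(Z_1+Z_2)^q=Z_1(Z_1+Z_2)^{q-1}+Z_2(Z_1+Z_2)^{q-1}$, integrating, and applying the reverse Hölder inequality to each summand with exponents $q$ and $q'=q/(q-1)$ gives $\E Z_i(Z_1+Z_2)^{q-1}\ge\|Z_i\|_q\cdot\|(Z_1+Z_2)^{q-1}\|_{q'}$. Using $(q-1)q'=q$ this last factor simplifies to $\|Z_1+Z_2\|_q^{q-1}$. Summing over $i=1,2$ and dividing by $\|Z_1+Z_2\|_q^{q-1}$, which is strictly positive by the assumption $Z_1,Z_2>0$, produces the desired inequality. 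The same identity $(q-1)q'=q$ works both when $q\in(0,1)$ (where $q'<0$ and we invoke the reverse Hölder inequality as proved) and when $q<0$ (where $q'\in(0,1)$ and the roles of the two exponents in reverse Hölder are swapped).

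The main obstacle is bookkeeping: because negative exponents reverse monotonicity when raising to powers, one has to track the signs of $p$, $q$, $1-p$, and $q'$ carefully at each step. A secondary point is checking that all quantities remain well-defined and strictly positive; the hypotheses $Z, Z_1, Z_2>0$ in $L^{\infty}$ guarantee this throughout and also allow the division by $\|Z_1+Z_2\|_q^{q-1}$ at the end of the Minkowski argument.
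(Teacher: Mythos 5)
Your argument is correct and follows essentially the same route as the paper: the reverse H\"older inequality is obtained by applying the ordinary H\"older inequality with exponents $\nicefrac{1}{p}$ and $\nicefrac{1}{1-p}$ to the factorization $|Y|^{p}=|YZ|^{p}\cdot Z^{-p}$ (the paper merely normalizes $\left\Vert YZ\right\Vert _{1}=\left\Vert Z\right\Vert _{q}=1$ first to shorten the algebra), and the reverse Minkowski inequality is derived by the same Riesz-type splitting $\E Z_{i}\left(Z_{1}+Z_{2}\right)^{q-1}$ with conjugate exponents $q$ and $\tfrac{q}{q-1}$. Your explicit remark on swapping the roles of the exponents when $q<0$ is a point the paper leaves implicit, but it does not change the argument.
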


\begin{proof}
Without loss of generality we may rescale $Y$ and $Z$ such that
$\left\Vert YZ\right\Vert _{1}=\left\Vert Z\right\Vert _{q}=1.$ Then
the desired inequality~\eqref{eq:5} reduces to $1\geq\left\Vert Y\right\Vert _{p}^{p}=\left\Vert \left(\frac{\left|Y\right|Z}{Z}\right)^{p}\right\Vert _{1}.$
To accept the latter apply Hölder's inequality to $\left(\frac{\left|Y\right|Z}{Z}\right)^{p}$
with $\frac{1}{p}>1$ and its conjugate Hölder exponent $\frac{1}{1-p}$,
giving %
{} %
\begin{align*}
\left\Vert \left(\frac{\left|Y\right|Z}{Z}\right)^{p}\right\Vert _{1} & \leq\left\Vert \left(\left|Y\right|Z\right)^{^{p}}\right\Vert _{\frac{1}{p}}\cdot\left\Vert \frac{1}{Z^{p}}\right\Vert _{\frac{1}{1-p}}=\bigl\Vert\left|Y\right|Z\bigr\Vert_{1}^{p}\cdot\left\Vert Z\right\Vert _{q}^{-p}=1
\end{align*}
and thus the statement.

We now derive the reverse Minkowski inequality by employing the reverse
Hölder inequality. Let $Z_{1},\,Z_{2}\in L^{\infty}$ be positive,
then 
\begin{align*}
\left\Vert Z_{1}+Z_{2}\right\Vert _{q}^{q} & =\E Z_{1}\left(Z_{1}+Z_{2}\right)^{q-1}+\E Z_{2}\left(Z_{1}+Z_{2}\right)^{q-1}.
\end{align*}
An application of the reverse Hölder inequality with conjugate exponents
$q$ and $\frac{q}{q-1}$ gives 
\begin{align*}
\E Z_{1}\left(Z_{1}+Z_{2}\right)^{q-1}+\E Z_{2}\left(Z_{1}+Z_{2}\right)^{q-1} & \geq\left(\left\Vert Z_{1}\right\Vert _{q}+\left\Vert Z_{2}\right\Vert _{q}\right)\left\Vert \left(Z_{1}+Z_{2}\right)^{q-1}\right\Vert _{\frac{q}{q-1}}\\
 & =\left(\left\Vert Z_{1}\right\Vert _{q}+\left\Vert Z_{2}\right\Vert _{q}\right)\left(\E\left(Z_{1}+Z_{2}\right)^{q}\right)^{\frac{q-1}{q}}
\end{align*}
from which the inequality $\left\Vert Z_{1}+Z_{2}\right\Vert _{q}\geq\left\Vert Z_{1}\right\Vert _{q}+\left\Vert Z_{2}\right\Vert _{q}$
follows. 
\end{proof}
\begin{rem}
The functional $\left\Vert \cdot\right\Vert _{p^{\prime}}$ is not
convex for $p^{\prime}<1,$ hence one might assume that the set 
\begin{equation}
\left\{ Z\in L^{1}\colon Z\geq0,\ \E Z=1,\ H_{p^{\prime}}(Z)\leq\log\frac{1}{1-\alpha}\right\} \label{eq:5-1}
\end{equation}
of feasible densities in~\eqref{eq: EVaRDef1} is not convex for
$p^{\prime}<1$ and $\alpha>0$. However, $H_{p^{\prime}}(Z)\leq\log\frac{1}{1-\alpha}$
rewrites as $\left\Vert Z\right\Vert _{p^{\prime}}\geq\left(1-\alpha\right)^{-\frac{1}{p^{\prime}}}$
for $p^{\prime}<1$. Thus the reverse Minkowski inequality guarantees
that the set of feasible densities~\eqref{eq:5-1} is convex even
for $p^{\prime}<1$.
\end{rem}

\begin{thm}
\label{thm:RVaR(p<1)}The domain of the risk measure $\EVaR_{\alpha}^{p}(\cdot)$
for $p<1$ is $L^{\infty}$, i.e., $\EVaR_{\alpha}^{p}\left(\left|Y\right|\right)<\infty$
if and only if $Y$ is bounded. Furthermore, the entropic risk measure
$\EVaR_{\alpha}^{p}$ collapses to the essential supremum for $0<p<1$.
\end{thm}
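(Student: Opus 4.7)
The plan is to handle the easy direction trivially and then split the core argument according to the sign of the H\"older conjugate $p^{\prime}=p/(p-1)$, which is negative for $0<p<1$ and lies in $(0,1)$ for $p<0$. The inclusion $L^{\infty}\subseteq\{Y:\EVaR_{\alpha}^{p}(|Y|)<\infty\}$ is immediate: for any density $Z$ one has $\E|Y|Z\le\|Y\|_{\infty}\E Z=\|Y\|_{\infty}$. The substance of the proof is the reverse inclusion together with the collapse statement.

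For $0<p<1$, hence $p^{\prime}<0$, I invoke Remark~\ref{rem:Pos-Entr}: every density $Z>0$ satisfies $H_{p^{\prime}}(Z)\le 0\le\log\frac{1}{1-\alpha}$, so the entropy constraint in~\eqref{eq: EVaRDef1} is vacuous and the feasible set reduces to all positive densities. The upper estimate $\E YZ\le\esssup Y$ is obvious, and the matching lower bound comes from the approximation $Z_{\epsilon}:=\frac{1-\epsilon P(A^{c})}{P(A)}\one_{A}+\epsilon\one_{A^{c}}$ on $A=\{Y>M\}$ (which has positive probability for $M<\esssup Y$), yielding $\E YZ_{\epsilon}\ge M(1-\epsilon P(A^{c}))\to M$ as $\epsilon\downarrow 0$. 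Letting $M\uparrow\esssup Y$ establishes both claims of the theorem in this regime simultaneously.

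For $p<0$, hence $p^{\prime}\in(0,1)$, the constraint rewrites (after multiplying through by $p^{\prime}-1<0$) as $\E Z^{p^{\prime}}\ge(1-\alpha)^{1-p^{\prime}}$, which is genuinely active. Given an unbounded $Y\ge 0$, let $A_{N}:=\{|Y|>N\}$ with $a_{N}:=P(A_{N})>0$, $a_{N}\to 0$, and construct
\[
Z_{N}:=c_{N}\one_{A_{N}}+\delta_{0}\one_{A_{N}^{c}},\qquad\delta_{0}:=(1-\alpha)^{(1-p^{\prime})/p^{\prime}}<1,\qquad c_{N}:=\tfrac{1-\delta_{0}(1-a_{N})}{a_{N}}.
\]
A short calculation shows $\E Z_{N}^{p^{\prime}}=(1-\alpha)^{1-p^{\prime}}+a_{N}\bigl(c_{N}^{p^{\prime}}-(1-\alpha)^{1-p^{\prime}}\bigr)$, and since $c_{N}=(1-\delta_{0})/a_{N}+\delta_{0}\to\infty$, the inequality $c_{N}\ge\delta_{0}$ (equivalent to feasibility) holds for all sufficiently large $N$. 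At the same time $\E|Y|Z_{N}\ge Nc_{N}a_{N}=N\bigl(1-\delta_{0}(1-a_{N})\bigr)\to\infty$, forcing $\EVaR_{\alpha}^{p}(|Y|)=\infty$.

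The main obstacle is the case $p<0$: because the entropy constraint is active, one cannot simply quote that every density is admissible but must exhibit explicit feasible densities concentrating on $\{|Y|>N\}$. The device that makes this go through is the positive floor $\delta_{0}<1$, which keeps $\E Z_{N}^{p^{\prime}}$ above the required threshold while still leaving mass $1-\delta_{0}$ free to concentrate on $A_{N}$. Note that $\delta_{0}<1$ precisely requires $\alpha>0$; the degenerate level $\alpha=0$ forces $Z=\one$ and reduces to $\EVaR_{0}^{p}(Y)=\E Y$, consistent with the remark preceding the theorem.
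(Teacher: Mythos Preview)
Your proof is correct and follows the paper's overall strategy: split according to the sign of $p^{\prime}$, observe that for $0<p<1$ the entropy constraint is vacuous by Remark~\ref{rem:Pos-Entr}, and for $p<0$ exhibit feasible two-valued densities concentrating on $\{|Y|>N\}$.

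The only genuine difference is in the construction for $p<0$. The paper introduces an auxiliary parameter $\kappa>1$ and uses densities taking the values $\tfrac{1}{\kappa\beta}$ and $\tfrac{1-1/\kappa}{1-\beta}$, then argues that $\E Z_{\beta}^{p^{\prime}}\to(1-1/\kappa)^{p^{\prime}}>(1-\alpha)^{1-p^{\prime}}$ as $\beta\to0$, so feasibility holds for $\beta$ small enough. Your choice of a fixed floor $\delta_{0}=(1-\alpha)^{(1-p^{\prime})/p^{\prime}}$ on $A_{N}^{c}$ is more economical: since $\delta_{0}^{p^{\prime}}=(1-\alpha)^{1-p^{\prime}}$, the complement already saturates the constraint, and the concentrating term $c_{N}^{p^{\prime}}a_{N}$ only improves it. In fact $c_{N}=(1-\delta_{0})/a_{N}+\delta_{0}\ge\delta_{0}$ holds for \emph{every} $N$ (not just large $N$), so feasibility is immediate and no limit argument is needed. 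Both constructions deliver $\E|Y|Z_{N}\gtrsim cN\to\infty$ with a positive constant, so the conclusion is the same; your version simply avoids the extra parameter $\kappa$.
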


\begin{proof}
For $p\in(0,1)$ the conjugate Hölder exponent $p^{\prime}$ is negative
and we may thus assume that $Z>0$. By Remark~\ref{rem:Pos-Entr}
we conclude that $H_{p^{\prime}}(Z)\leq0$. The constraint $H_{p^{\prime}}(Z)\le\log\frac{1}{1-\alpha}$
thus is trivial, as $\log\frac{1}{1-\alpha}\ge0$ and it follows that
the entropic risk measure $\EVaR_{\alpha}^{p}$ reduces to
\[
\EVaR_{\alpha}^{p}(Y)=\sup\left\{ \E YZ\colon Z\geq0,\,\E Z=1\right\} =\esssup Y
\]
in this case. 

Let us now consider the case $p<0$. Then its Hölder conjugate exponent
satisfies $0<p^{\prime}<1$ and the constraint $H_{p^{\prime}}(Z)\le\log\frac{1}{1-\alpha}$
is equivalent to $\E Z^{p^{\prime}}\ge\left(\frac{1}{1-\alpha}\right)^{p^{\prime}-1}=\left(1-\alpha\right)^{1-p^{\prime}}$.
We may choose $\kappa>1$ large enough so that
\[
\left(1-\frac{1}{\kappa}\right)^{p^{\prime}}>\left(1-\alpha\right)^{1-p^{\prime}}.
\]
For $\beta\in(0,1)$ consider random variables $Z_{\beta}$ with 
\[
P\left(Z_{\beta}=\frac{1}{\kappa\beta}\right)=\beta\text{ and }P\left(Z_{\beta}=\frac{1-\frac{1}{\kappa}}{1-\beta}\right)=1-\beta.
\]
The random variable $Z_{\beta}$ is a density, as $Z_{\beta}>0$ and
$\E Z_{\beta}=1$. For the random variables $Z_{\beta}$ it thus holds
that 
\[
\E Z_{\beta}^{p^{\prime}}=\beta\left(\frac{1}{\kappa\beta}\right)^{p^{\prime}}+(1-\mathbf{\beta})\left(\frac{1-\frac{1}{\kappa}}{1-\beta}\right)^{p^{\prime}}\xrightarrow[\beta\to0]{}\left(1-\frac{1}{\kappa}\right)^{p^{\prime}}>\left(1-\alpha\right)^{1-p^{\prime}}.
\]
We thus may choose $\hat{\beta}<1$ so that for $\beta<\hat{\beta}$
we have that $\mathbb{E}Z_{\beta}^{p^{\prime}}>\left(1-\alpha\right)^{1-p^{\prime}}$.

Finally let $Y$ be an unbounded random variable. Without loss of
generality we may assume that $Y\ge0$. Then, for each $n\in\mathbb{N}$,
the set $B_{n}:=\left\{ Y\ge n\right\} $ has strictly positive probability
and we set $\beta_{n}:=P(B_{n})>0$. The variable
\[
Z_{\beta_{n}}(\omega):=\begin{cases}
\frac{1}{\kappa\beta_{n}} & \text{if }\omega\in B_{n}\\
\frac{1-\frac{1}{\kappa}}{1-\beta_{n}} & \text{if }\omega\not\in B_{n}
\end{cases}
\]
is feasible and it holds that $\E YZ_{\beta_{n}}\ge\frac{n}{\kappa}$
and thus $\EVaR_{\alpha}^{p}(Y)\ge\frac{n}{\kappa}$. This proves
that $Y\notin L^{\infty}$ implies $\EVaR_{\alpha}^{p}(Y)=\infty$.
The converse implication follows directly from Hölder's inequality. 
\end{proof}
It is now clear that for $p>1$ the risk measures $\EVaR_{\alpha}^{p}$
have the domain $L^{p}$ and for these spaces the dual spaces are
known, they are $L^{p^{\prime}}$ spaces, where $p^{\prime}$ is the
Hölder conjugate of $p$. For $p=\infty$ we have the special case
that $\EVaR$ can be defined on a space larger than $L^{\infty}$,
this is studied in \citet{AhmadiPi-R}. 

In what follows we address the duality relations of the Entropic Value-at-Risk
for $p<1$. 

\section{\label{sec:Duality}Dual representation of entropic risk measures}

In this section we develop representations of entropic risk measures
which are dual to the expression given in Definition~\ref{def:Entropic}.
This characterization allows us to deduce continuity properties of
$\EVaR$ as well as to compare the entropic risk measures with Hölder
norms. In what follows we discuss the three cases $p\ge1$, $p\in(0,1)$
and $p<0$ separately.

\subsection{Infimum representation for $p\geq1$}

The following theorem is originally due to~\citet{Dentcheva2010-R}.
We state the result as it is similar to duality representations given
below. We further use it to construct an explicit characterization
of the dual norm and its corresponding Hahn\textendash Banach functionals
in Section~\ref{sec:Dual-Norms} below. 
\begin{thm}[Infimum representation for $p\ge1$, cf.\ \citet{Dentcheva2010-R}]
\label{thm:primal1}Let $\alpha\in\left(0,1\right)$, then the Entropic
Value-at-Risk based on Rényi entropy for $p\in[1,\infty)$ has the
representation
\begin{equation}
\EVaR_{\alpha}^{p}(Y)=\inf_{t\in\mathbb{R}}\left\{ t+\left(\frac{1}{1-\alpha}\right)^{\nicefrac{1}{p}}\cdot\left\Vert \left(Y-t\right)_{+}\right\Vert _{p}\right\} .\label{eq: EVaRinf1}
\end{equation}
\end{thm}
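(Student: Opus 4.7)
The plan is to recast the entropic constraint as an $L^{p'}$-norm ball, obtain the $\le$ direction by coherence plus Hölder's inequality, and obtain the $\ge$ direction by exhibiting an explicit maximizing density $Z^{*}$ coming from the first-order optimality condition in $t$. Using Remark~\ref{rem: norm-ent}, $H_{p'}(Z)=\tfrac{p'}{p'-1}\log\|Z\|_{p'}$, and since $(p'-1)/p'=1/p$, the constraint $H_{p'}(Z)\le\log\frac{1}{1-\alpha}$ is equivalent to $\|Z\|_{p'}\le M$ with $M:=(1-\alpha)^{-1/p}$ (for $p=1$ it directly reads $\|Z\|_{\infty}\le M=1/(1-\alpha)$).

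For the upper bound I would exploit that $\EVaR_{\alpha}^{p}$ is coherent; its definition~\eqref{eq: EVaRDef1} matches Delbaen's representation~\eqref{eq:DelbaenRep}, so it is translation equivariant and monotone. For every $t\in\mathbb{R}$,
\begin{equation*}
\EVaR_{\alpha}^{p}(Y) = t+\EVaR_{\alpha}^{p}(Y-t) \le t+\EVaR_{\alpha}^{p}\bigl((Y-t)_{+}\bigr),
\end{equation*}
because $Y-t\le(Y-t)_{+}$. Hölder's inequality applied inside the supremum gives $\E(Y-t)_{+}Z\le\|(Y-t)_{+}\|_{p}\|Z\|_{p'}\le M\|(Y-t)_{+}\|_{p}$ for every feasible $Z$. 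Passing to the infimum over $t$ yields the $\le$ direction.

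For the lower bound, assume first $p>1$ and that the infimum on the right is attained at some $t^{*}$ with $\|(Y-t^{*})_{+}\|_{p}>0$. Differentiating $t\mapsto t+M\|(Y-t)_{+}\|_{p}$ yields the first-order relation $\E(Y-t^{*})_{+}^{p-1}=\tfrac{1}{M}\|(Y-t^{*})_{+}\|_{p}^{p-1}$. I would then set
\begin{equation*}
Z^{*}:=M\,\frac{(Y-t^{*})_{+}^{p-1}}{\|(Y-t^{*})_{+}\|_{p}^{p-1}},
\end{equation*}
observe that $Z^{*}\ge 0$, that $\|Z^{*}\|_{p'}=M$ thanks to the identity $(p-1)p'=p$, and that $\E Z^{*}=1$ from the first-order relation. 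Using $(Y-t^{*})(Y-t^{*})_{+}^{p-1}=(Y-t^{*})_{+}^{p}$ pointwise,
\begin{equation*}
\E YZ^{*} = t^{*} + M\,\frac{\E(Y-t^{*})_{+}^{p}}{\|(Y-t^{*})_{+}\|_{p}^{p-1}} = t^{*} + M\,\|(Y-t^{*})_{+}\|_{p},
\end{equation*}
which matches the infimum value and closes the two inequalities.

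The main obstacle is the boundary case $p=1$ together with the degenerate subcases. For $p=1$ the analogue of $Z^{*}$ is $\tfrac{1}{1-\alpha}\one_{Y>t^{*}}$ where $t^{*}$ is an upper $(1-\alpha)$-quantile of $Y$, but a mass redistribution on the level set $\{Y=t^{*}\}$ may be necessary to enforce $\E Z^{*}=1$ when the quantile carries an atom; this reproduces the Rockafellar--Uryasev representation of $\AVaR_{\alpha}$. When $(Y-t^{*})_{+}\equiv 0$ a.s.\ the right-hand side collapses to a constant dominating $\esssup Y\ge\EVaR_{\alpha}^{p}(Y)$, and when the infimum is not attained I would replace $t^{*}$ by a minimizing sequence $t_{n}$ and the corresponding $Z_{n}^{*}$. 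For $p>1$ the strict convexity of $\|\cdot\|_{p}$ makes both existence of $t^{*}$ and the calculus argument routine, so the essential bookkeeping is concentrated in the $p=1$ endpoint.
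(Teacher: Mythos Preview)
Your argument is correct and is precisely the classical approach the paper invokes: the paper does not prove Theorem~\ref{thm:primal1} itself but attributes it to \citet{Dentcheva2010-R} (and, via Remark~\ref{rem:7}, to \citet{Pichler2017-R}), and Remark~\ref{rem:7} records exactly the optimal density $Z^{*}=(Y-t^{*})_{+}^{p-1}/\E(Y-t^{*})_{+}^{p-1}$ together with the atom-adjusted variant $Z^{*}=p_{\max}^{-1}\one_{\{Y=\esssup Y\}}$ that your first-order construction produces. The edge cases you flag (the $p=1$ endpoint, the degenerate case $t^{*}=\esssup Y$) are the same ones the paper's remark singles out.
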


\begin{rem}
\label{rem:7}Note that the previous setting includes the case $p=1$
as a special case and we have the identity 
\[
\AVaR_{\alpha}(Y)=\inf_{t\in\mathbb{R}}\left\{ t+\frac{1}{1-\alpha}\cdot\E(Y-t)_{+}\right\} 
\]
given in~\citet{RuszOgryczak}. The result~\eqref{eq: EVaRinf1}
deduces as well from 
\begin{equation}
\EVaR_{\alpha}^{p}(Y)=\sup\left\{ \E YZ\colon Z\ge0,\,\E Z=1\text{ and }\left\Vert Z\right\Vert _{p^{\prime}}\le\left(\frac{1}{1-\alpha}\right)^{\frac{p^{\prime}-1}{p^{\prime}}}\right\} \label{eq:3-2}
\end{equation}
and \citet[Theorem 3.1]{Pichler2017-R}.

\citet{Dentcheva2010-R} also relate the optimal density $Z^{*}$
of~\eqref{eq: EVaRDef1} and the optimizer $t^{*}$ of~\eqref{eq: EVaRinf1}
by 
\begin{equation}
Z^{*}=\frac{\left(Y-t^{*}\right)_{+}^{p-1}}{\E\left(Y-t^{*}\right)_{+}^{p-1}},\label{eq:7-4}
\end{equation}
if $t^{*}<\esssup Y$. For $t^{*}=\esssup Y$ they give the optimal
density $Z^{*}=\frac{1}{p_{\text{max}}}\one_{\left\{ Y=\esssup Y\right\} }$
with $p_{\text{max}}:=P\left(Y=\esssup Y\right)$. 
\end{rem}

\subsection{Infimum representation for $p<0$}

To elaborate the dual representation in analogy to~\eqref{eq: EVaRinf1}
 for $p<0$ we discuss the function $f\colon(\esssup Y,\infty)\to\mathbb{R}$
given by
\begin{equation}
f(t)=t-\left(\frac{1}{1-\alpha}\right)^{\nicefrac{1}{p}}\cdot\left\Vert t-Y\right\Vert _{p},\label{eq: ObjectiveRVaRinf}
\end{equation}
where $p<0$ and $Y\in L^{\infty}.$ The function $f(\cdot)$ has
the following property.
\begin{prop}
\label{prop:objective}The following are equivalent for $p<0$ and
$Y\in L^{\infty}$:
\begin{enumerate}
\item \label{enu:2-1}The infimum in~\eqref{eq: ObjectiveRVaRinf} is attained
at some $t^{*}>\esssup Y$,
\item \label{enu:2}$P\left(Y=\esssup Y\right)<1-\alpha$. 
\end{enumerate}
Furthermore,\textup{ }if $t^{*}=\esssup Y$, then  it holds that \textup{$\inf_{t>\esssup Y}\,f(t)=\esssup Y$}. 

\end{prop}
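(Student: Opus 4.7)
The idea is to use convexity of $f$ on $(\esssup Y,\infty)$ together with an analysis of its one-sided derivative at $t\downarrow\esssup Y$. Because $t>\esssup Y$ implies $t-Y>0$ a.s., positive homogeneity of $\|\cdot\|_p$ combined with the reverse Minkowski inequality (proven just above the proposition) gives
\[
\|\lambda t_1+(1-\lambda)t_2-Y\|_p=\|\lambda(t_1-Y)+(1-\lambda)(t_2-Y)\|_p\ge\lambda\|t_1-Y\|_p+(1-\lambda)\|t_2-Y\|_p,
\]
so $t\mapsto-\|t-Y\|_p$ is convex and thus so is $f$. Writing $c:=(1/(1-\alpha))^{1/p}=(1-\alpha)^{-1/p}$, one has $c\in(0,1)$ because $p<0$ and $\alpha\in(0,1)$. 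Since $\|t-Y\|_p/t\to1$ as $t\to\infty$, it follows that $f(t)\sim(1-c)t\to+\infty$. A convex function on $(\esssup Y,\infty)$ which blows up at infinity attains its infimum at an interior point precisely when its right derivative at the boundary is strictly negative; otherwise $f$ is non-decreasing and the infimum equals the boundary limit $\lim_{t\downarrow\esssup Y}f(t)$.

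Next I would compute this right derivative. Setting $M:=\esssup Y$ and $\beta:=P(Y=M)$, differentiation yields
\[
f'(t)=1-c\bigl(\E(t-Y)^p\bigr)^{1/p-1}\E(t-Y)^{p-1}.
\]
Splitting $\E(t-Y)^q=\beta(t-M)^q+\E\!\left[(t-Y)^q\one_{Y<M}\right]$ for $q\in\{p,p-1\}$, dominated convergence applied to the factor $((t-Y)/(t-M))^q$ on $\{Y<M\}$ shows the second summand is $o((t-M)^q)$ as $t\downarrow M$ (the ratio tends to $0$ pointwise since $t-Y$ stays bounded below there while $t-M\to0$). Hence whenever $\beta>0$,
\[
\E(t-Y)^p\sim\beta(t-M)^p\quad\text{and}\quad\E(t-Y)^{p-1}\sim\beta(t-M)^{p-1},
\]
and substituting cancels the $(t-M)$-factors to give $\lim_{t\downarrow M}f'(t)=1-(\beta/(1-\alpha))^{1/p}$. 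Since $1/p<0$, this limit is strictly negative iff $\beta<1-\alpha$ and strictly positive iff $\beta>1-\alpha$; with the natural convention $0^{1/p}=+\infty$, the case $\beta=0$ is subsumed and again yields $f'(M^+)<0$. This establishes the equivalence (i)$\Leftrightarrow$(ii).

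For the supplementary claim suppose $t^{*}=\esssup Y$. Then by convexity $f$ is non-decreasing on $(M,\infty)$, so $f'(M^+)\ge0$, which by the preceding computation forces $\beta\ge1-\alpha$; in particular $\beta>0$. The asymptotic $\E(t-Y)^p\sim\beta(t-M)^p\to+\infty$ then gives $\|t-Y\|_p=(\E(t-Y)^p)^{1/p}\to0$ as $t\downarrow M$, so $\lim_{t\downarrow M}f(t)=M=\esssup Y$, which is the asserted value of the infimum. \textbf{The main obstacle} is the asymptotic analysis of the two expectations at $t\downarrow M$: one must cleanly isolate the contribution of the atom at $\{Y=M\}$ from the bulk on $\{Y<M\}$, and the dominated convergence argument on the ratio $((t-Y)/(t-M))^{p-1}$ is the technical heart of the proof.
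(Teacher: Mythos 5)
Your computation is fine whenever $\beta:=P(Y=\esssup Y)>0$: the atom/bulk split together with dominated convergence gives $\lim_{t\downarrow M}f^{\prime}(t)=1-\bigl(\beta/(1-\alpha)\bigr)^{1/p}$, and this is in fact a cleaner route than the paper's, which works with the sets $A_{\varepsilon}=\{Y+\varepsilon\ge\esssup Y\}$ and two successive limits $t\searrow\esssup Y$, $\varepsilon\searrow0$. The genuine gap is your treatment of $\beta=0$. There the asymptotics $\E(t-Y)^{q}\sim\beta(t-M)^{q}$ are vacuous, and the ``convention $0^{1/p}=+\infty$'' is not an argument; the asserted conclusion $f^{\prime}(M^{+})<0$ is actually false in general. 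If $W:=\esssup Y-Y$ satisfies $\E W^{p-1}<\infty$ (hence also $\E W^{p}<\infty$), monotone convergence gives the finite limit $f^{\prime}(M^{+})=1-(1-\alpha)^{-1/p}\,\E W^{p-1}\,\bigl(\E W^{p}\bigr)^{\frac{1-p}{p}}$, which can be positive. Concretely, take $p=-1$, $\alpha=\tfrac12$ and $W$ with density $3w^{2}$ on $[0,1]$: then $\E W^{-1}=\tfrac32$, $\E W^{-2}=3$, so $f^{\prime}(M^{+})=1-\tfrac12\cdot\tfrac{3}{(3/2)^{2}}=\tfrac13>0$; by convexity $f$ is then strictly increasing on $(\esssup Y,\infty)$, the infimum is not attained (it equals $\lim_{t\downarrow M}f(t)=\tfrac23$), although $P(Y=\esssup Y)=0<1-\alpha$. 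So the chain ``attained $\iff f^{\prime}(M^{+})<0\iff\beta<1-\alpha$'' breaks at $\beta=0$, and your proof of the supplementary claim (which deduces $\beta\ge1-\alpha$ from $f^{\prime}(M^{+})\ge0$) inherits the same gap. Note that the implication you lose is precisely the one the paper itself only asserts (``proven similarly''); the example suggests it requires an additional hypothesis (an atom at the essential supremum, or $\E(\esssup Y-Y)^{p-1}=\infty$), so this is not a technicality you can patch within your scheme.

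A second, smaller point: your reduction ``a convex function blowing up at infinity attains its infimum in the interior iff its right boundary derivative is strictly negative'' is not true verbatim. If $f^{\prime}(M^{+})=0$ (the borderline case $\beta=1-\alpha$), a convex $f$ could in principle be constant on an initial interval and attain its infimum at interior points, which would spoil the direction (i)$\Rightarrow$(ii). You need to exclude local constancy of $f$ — for instance by observing that $t\mapsto\E(t-Y)^{p}$ is analytic on $(\esssup Y,\infty)$ and that $t-c\|t-Y\|_{p}$ cannot be constant because $c=(1-\alpha)^{-1/p}\neq1$ forces different behaviour as $t\to\infty$ — or argue with strict inequalities along the lines of the paper's proof.
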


\begin{proof}
For $p<0$, the function $\left\Vert \cdot\right\Vert _{p}$  is concave
by the reverse Minkowski inequality. The function~\eqref{eq: ObjectiveRVaRinf}
thus is convex and hence almost everywhere differentiable. For $t\in\mathbb{R}$
large enough the objective is monotone increasing, as we have
\begin{align*}
f\left(t\right) & =t-\left(\frac{1}{1-\alpha}\right)^{\frac{1}{p}}\left(\E\left(t-Y\right){}^{p}\right)^{\frac{1}{p}}=t-\left(\frac{1}{1-\alpha}\right)^{\frac{1}{p}}\cdot t\cdot\left(\E1-\frac{pY}{t}\right)^{\frac{1}{p}}+\mathcal{O}\left(\nicefrac{1}{t}\right)\\
 & =t-\left(\frac{1}{1-\alpha}\right)^{\frac{1}{p}}\left(t-\E Y\right)+\mathcal{O}\left(\nicefrac{1}{t}\right),
\end{align*}
by successive Taylor series expansions; the infimum is hence attained
at some $t^{*}<\infty$, as $\left(\frac{1}{1-\alpha}\right)^{\frac{1}{p}}<1$. 

The derivative $f^{\prime}(t)=1-(1-\alpha)^{-1/p}\left(\E\left(t-Y\right){}^{p}\right)^{\frac{1}{p}-1}\cdot\E\left(t-Y\right){}^{p-1}$
is negative if and only if $\left(1-\alpha\right){}^{1/p}<\left(\E\left(t-Y\right){}^{p}\right)^{\frac{1-p}{p}}\cdot\E\left(t-Y\right){}^{p-1}.$
Now note that $1-p>0$ so that we get $\left(1-\alpha\right){}^{\frac{1}{p(1-p)}}\cdot\left\Vert t-Y\right\Vert _{p-1}<\left\Vert t-Y\right\Vert _{p}$.
Define $A_{\varepsilon}:=\left\{ Y+\varepsilon\ge\esssup Y\right\} $
and set $Y_{\text{max}}:=\esssup Y$. Then we have for large $t$
that 
\[
\left(t-Y_{\text{max}}\right){}^{p-1}P\left(A_{\varepsilon}\right)+\varepsilon^{p-1}P\left(A_{\varepsilon}^{\mathsf{c}}\right)\ge\E\left(t-Y\right)^{p-1}
\]
whenever $t>Y_{\text{max}}$. This implies
\begin{align*}
\left(1-\alpha\right){}^{\frac{1}{p(1-p)}}\cdot & \left(\left(t-Y_{\text{max}}\right)^{p-1}P\left(A_{\varepsilon}\right)+\varepsilon^{p-1}P\left(A_{\varepsilon}^{\mathsf{c}}\right)\right)^{\frac{1}{p-1}}\\
\le & \left(1-\alpha\right){}^{\frac{1}{p(1-p)}}\cdot\left\Vert t-Y\right\Vert _{p-1}<\left\Vert t-Y\right\Vert _{p}\le\left(\left(t-Y_{\text{max}}\right)^{p}P(A_{\varepsilon})\right)^{\frac{1}{p}}.
\end{align*}
Dividing the quantity by $t-Y_{\text{max}}$ we obtain further
\[
\left(1-\alpha\right){}^{\frac{1}{p(1-p)}}\cdot\left(P\left(A_{\varepsilon}\right)+\varepsilon^{p-1}P\left(A_{\varepsilon}^{\mathsf{c}}\right)\left(t-Y_{\text{max}}\right)^{1-p}\right)^{\frac{1}{p-1}}<P\left(A_{\varepsilon}\right)^{\frac{1}{p}}.
\]
Letting $t\searrow Y_{\text{max}}$ yields $\left(1-\alpha\right){}^{\frac{1}{p(1-p)}}\cdot P\left(A_{\varepsilon}\right)^{\frac{1}{p-1}}<P\left(A_{\varepsilon}\right)^{\frac{1}{p}}$,
which is equivalent to $\left(1-\alpha\right)>P\left(A_{\varepsilon}\right)$
and letting $\varepsilon\searrow0$ give $1-\alpha>P\left(A_{0}\right)=P\left(Y=\esssup Y\right)$.
Note that strict inequality holds, since $P\left(A_{\varepsilon}\right)\geq P\left(A_{0}\right)$
for all $\varepsilon>0$. Therefore $t^{*}>\esssup Y$ implies that
$1-\alpha>P\left(Y=\esssup Y\right)$. 

The converse implication~$\ref{enu:2}\implies\ref{enu:2-1}$ is proven
similarly. 

To see the remaining statement set $A:=\left\{ Y=\esssup Y\right\} $.
By the previous results we know that the objective function $f$ is
increasing on its domain. Therefore the infimum of~\eqref{eq: ObjectiveRVaRinf}
is a limit and
\begin{align*}
\inf_{t>\esssup Y}\left\{ t-\left(\frac{1}{1-\alpha}\right)^{\nicefrac{1}{p}}\cdot\left\Vert t-Y\right\Vert _{p}\right\}  & =\lim_{t\searrow\esssup Y}\:t-\lim_{t\searrow\esssup Y}\:\left(\frac{1}{1-\alpha}\right)^{\nicefrac{1}{p}}\cdot\left\Vert t-Y\right\Vert _{p}.
\end{align*}
But for the second limit we have
\begin{align*}
0\le\lim_{t\searrow\esssup Y}\left\Vert t-Y\right\Vert _{p} & \leq\lim_{t\searrow\esssup Y}\left(\left(t-\esssup Y\right)^{p}P\left(A\right)\right)^{\frac{1}{p}}\\
 & =\lim_{t\searrow\esssup Y}\left(t-\esssup Y\right)P\left(A\right)^{\frac{1}{p}}=0,
\end{align*}
which proves that $\inf_{t>\esssup Y}\,f(t)=\esssup Y$ and concludes
the proof. 
\end{proof}
Using the characterization of the optimal value of $f(\cdot)$ in~\eqref{eq: ObjectiveRVaRinf}
we are now ready to prove the infimum representation of $\EVaR_{\alpha}^{p}$
for $p<0$.
\begin{thm}[Infimum representation for $p<0$]
\label{thm:primal2}Let $\alpha\in(0,1)$, then the Entropic Value-at-Risk
based on Rényi entropy $\left(\EVaR_{\alpha}^{p}\right)$ for $p<0$
has the representation 
\begin{equation}
\EVaR_{\alpha}^{p}(Y)=\inf_{t>\esssup Y}\left\{ t-\left(\frac{1}{1-\alpha}\right)^{\nicefrac{1}{p}}\cdot\left\Vert t-Y\right\Vert _{p}\right\} .\label{eq:EVaRinf}
\end{equation}
\end{thm}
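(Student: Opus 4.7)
The plan is to mirror the proof of Theorem~\ref{thm:primal1} but with inequalities reversed: get one direction via the reverse H\"older inequality proven above, and the other by exhibiting an optimal density. A preliminary step is to recast the entropy constraint in a form compatible with reverse H\"older. Since $p<0$ forces $p'\in(0,1)$, one has $H_{p'}(Z)=\frac{p'}{p'-1}\log\lVert Z\rVert_{p'}$ with a \emph{negative} prefactor, so the constraint $H_{p'}(Z)\le\log\frac{1}{1-\alpha}$ is equivalent to
\[
\lVert Z\rVert_{p'}\ge(1-\alpha)^{(1-p')/p'}=(1-\alpha)^{-1/p},
\]
using $1/p'-1=-1/p$. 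Note the bound is strictly less than $1$, as required for the constraint to be non-vacuous on densities.

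For the inequality ``$\EVaR_{\alpha}^{p}(Y)\le\inf_{t}\dots$'', fix $t>\esssup Y$ and a feasible $Z$; without loss of generality $Z>0$ (this is standard for $p<0$ since otherwise $H_{p'}$ is degenerate). Write $\E YZ=t-\E(t-Y)Z$ and apply the reverse H\"older inequality from the preceding lemma to the positive random variable $t-Y$ with conjugate exponents $p$ and $p'$:
\[
\E(t-Y)Z\ge\lVert t-Y\rVert_{p}\cdot\lVert Z\rVert_{p'}\ge(1-\alpha)^{-1/p}\,\lVert t-Y\rVert_{p}.
\]
Taking the supremum over $Z$ and then the infimum over $t>\esssup Y$ yields one direction.

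For the reverse direction I would split into the two cases provided by Proposition~\ref{prop:objective}. If $P(Y=\esssup Y)<1-\alpha$, then the infimum is attained at some $t^{*}>\esssup Y$, and I choose
\[
Z^{*}:=\frac{(t^{*}-Y)^{p-1}}{\E(t^{*}-Y)^{p-1}}.
\]
This is a strictly positive density, and it forces equality in reverse H\"older (the saturation condition $(t^{*}-Y)^{p}\propto(Z^{*})^{p'}$ holds, using $(p-1)p'=p$). To verify $Z^{*}$ is feasible, I compute
\[
\lVert Z^{*}\rVert_{p'}^{p'}=\frac{\E(t^{*}-Y)^{p}}{\bigl(\E(t^{*}-Y)^{p-1}\bigr)^{p'}},
\]
and the first-order condition $f'(t^{*})=0$ derived in the proof of Proposition~\ref{prop:objective} rearranges precisely to the identity $\E(t^{*}-Y)^{p}=(1-\alpha)^{-p'/p}\bigl(\E(t^{*}-Y)^{p-1}\bigr)^{p'}$, giving $\lVert Z^{*}\rVert_{p'}=(1-\alpha)^{-1/p}$ with equality in the constraint. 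Then $\E YZ^{*}=t^{*}-(1-\alpha)^{-1/p}\lVert t^{*}-Y\rVert_{p}$, matching the infimum.

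In the complementary case $P(Y=\esssup Y)\ge1-\alpha$, Proposition~\ref{prop:objective} says the infimum equals $\esssup Y$, and monotonicity already gives $\EVaR_{\alpha}^{p}(Y)\le\esssup Y$. For the matching lower bound, set $A:=\{Y=\esssup Y\}$ and take $Z:=\one_{A}/P(A)$. A short computation gives $\lVert Z\rVert_{p'}^{p'}=P(A)^{1-p'}$, and using $p(1-p')=-p'$ one checks that $P(A)\ge1-\alpha$ is exactly equivalent to $\lVert Z\rVert_{p'}\ge(1-\alpha)^{-1/p}$, so $Z$ is feasible with $\E YZ=\esssup Y$. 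The main obstacle in the whole argument is matching the first-order condition with the entropy constraint in Case~A; the exponent bookkeeping is delicate (conjugacy is inherited on both the norm and the density sides), but once $(p-1)p'=p$ is invoked it is routine.
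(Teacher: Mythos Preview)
Your proof is correct and follows essentially the same approach as the paper: the same case split via Proposition~\ref{prop:objective}, the same explicit densities $Z^{*}=(t^{*}-Y)^{p-1}/\E(t^{*}-Y)^{p-1}$ and $\one_{A}/P(A)$, and the same use of reverse H\"older together with the first-order condition $f'(t^{*})=0$ to match the entropy constraint. The only organizational difference is that you establish the upper bound $\EVaR_{\alpha}^{p}(Y)\le\inf_{t}f(t)$ uniformly upfront via reverse H\"older, whereas the paper folds this direction into each case as a verification that the constructed density is maximal.
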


\begin{proof}
Let $f\left(\cdot\right)$ denote the objective function in~\eqref{eq:EVaRinf}.
From the previous proposition it is clear that the minimizer $t^{*}$
of $f(\cdot)$ either satisfies $t^{*}=\esssup Y$ or $t^{*}>\esssup Y$.
First assume that $t^{*}>\esssup Y$. The random variable $Z:=\frac{1}{c}\left(t^{*}-Y\right){}^{p-1}$
with $c:=\E\left(t^{*}-Y\right)^{p-1}$ has expectation $1$ and $Z>0$,
i.e., $Z$ is a density. By definition of $Z$, Hölder's inequality
is an equality for $Z$ and $(t^{*}-Y)$, i.e., $\E\left(t^{*}-Y\right)Z=\left\Vert t^{*}-Y\right\Vert _{p}\left\Vert Z\right\Vert _{p^{\prime}}$,
as 
\[
\left|Z\right|^{p^{\prime}}\E\left(t^{*}-Y\right){}^{p}=\left|t^{*}-Y\right|^{p}\,\E Z^{p^{\prime}}.
\]
Furthermore, for the optimizer $t^{*}$ of the objective function
$f(\cdot)$ it holds that $f^{\prime}(t^{*})=0$, which is  
\[
\left(1-\alpha\right){}^{\frac{1}{p}}=\E\left(t^{*}-Y\right)^{p-1}\left(\E\left(t^{*}-Y\right)^{p}\right)^{-\frac{1}{p'}}.
\]
Therefore,
\begin{align*}
\E YZ & =t^{*}-\E\left(t^{*}-Y\right)Z=t^{*}-\left\Vert t^{*}-Y\right\Vert _{p}\left\Vert Z\right\Vert _{p^{\prime}}=t^{*}-\left(\frac{1}{1-\alpha}\right)^{\frac{1}{p}}\left\Vert t^{*}-Y\right\Vert _{p}\\
 & =\inf_{t>\esssup Y}\left\{ t-\left(\frac{1}{1-\alpha}\right)^{\nicefrac{1}{p}}\cdot\left\Vert t-Y\right\Vert _{p}\right\} ,
\end{align*}
establishing that $\EVaR_{\alpha}^{p}(Y)\geq\inf_{t>\esssup Y}\left\{ t-\left(\frac{1}{1-\alpha}\right)^{\nicefrac{1}{p}}\cdot\left\Vert t-Y\right\Vert _{p}\right\} $.
Suppose for a moment that $Z$ is not a maximizing density in~\eqref{eq: EVaRDef1},
then there is another density $\widehat{Z}$ satisfying the moment
constraints which maximizes~\eqref{eq: EVaRDef1}, but
\begin{align*}
\E Y\widehat{Z} & =t^{*}-\E\left(t^{*}-Y\right)\widehat{Z}<t^{*}-\lVert t^{*}-Y\rVert_{p}\lVert\widehat{Z}\rVert_{p^{\prime}}.
\end{align*}
Since $H_{p^{\prime}}(Z)\leq\log\left(\frac{1}{1-\alpha}\right)$
is equivalent to $\left\Vert Z\right\Vert _{p^{\prime}}\geq\left(\frac{1}{1-\alpha}\right)^{\frac{1}{p}}$,
it follows that 
\begin{equation}
t^{*}-\lVert t^{*}-Y\rVert_{p}\lVert\widehat{Z}\rVert_{p^{\prime}}<t^{*}-\left(\frac{1}{1-\alpha}\right)^{\frac{1}{p}}\left\Vert t^{*}-Y\right\Vert _{p}=\E YZ,\label{eq: weak duality}
\end{equation}
contradicting the assumption that $\widehat{Z}$ maximizes~\eqref{eq: EVaRDef1}
and thus $\EVaR_{\alpha}^{p}(Y)=\E YZ$.

Now assume that $t^{*}=\esssup Y$. By the previous proposition it
follows that $P(Y=\esssup Y)\geq1-\alpha$ and therefore define 
\[
Z:=P\left(Y=\esssup Y\right)^{-1}\one_{\left\{ Y=\esssup Y\right\} }.
\]
Then $\E Z=1$, $\left\Vert Z\right\Vert _{p^{\prime}}=P\left(Y=\esssup Y\right)^{\frac{1-p^{\prime}}{p^{\prime}}}>\left(1-\alpha\right)^{\frac{1-p^{\prime}}{p^{\prime}}}$
and $\E YZ=\esssup Y$. By the previous proposition we have $\EVaR_{\alpha}^{p}(Y)\geq\inf_{x>\esssup Y}\left\{ t-\left(\frac{1}{1-\alpha}\right)^{\nicefrac{1}{p}}\cdot\left\Vert t-Y\right\Vert _{p}\right\} =\esssup Y$.
Now consider any density $\widehat{Z}$ satisfying the moment constraints
and let $t^{*}=\esssup Y$. Then 
\begin{align*}
\E Y\widehat{Z} & =t^{*}-\E\left(t^{*}-Y\right)\widehat{Z}\leq t^{*}=\esssup Y=\inf_{x>\esssup Y}\left\{ t-\left(\frac{1}{1-\alpha}\right)^{\nicefrac{1}{p}}\cdot\left\Vert t-Y\right\Vert _{p}\right\} ,
\end{align*}
where the last equation follows by the assumption $t^{*}=\esssup Y$
and Proposition~\ref{prop:objective}. This establishes the infimum
representation.
\end{proof}

\subsection{Infimum representation for $p\in\left(0,1\right)$}

From Theorem~\ref{thm:RVaR(p<1)} we know that for $0<p<1$ the entropic
risk measure $\EVaR_{\alpha}^{p}$ does not depend on $\alpha$ or
$p$. The corresponding infimum representation is 
\[
\EVaR_{\alpha}^{p}(Y)=\left\Vert Y\right\Vert _{\infty}=\inf_{t\geq\esssup Y}\left\{ t+\left\Vert \left(Y-t\right)_{+}\right\Vert _{p}\right\} ,
\]
as the infimum is always attained for $t=\esssup Y$. As this case
is trivial we will not consider it throughout the remainder of this
paper. 

\section{\label{sec:Analytic-prop}Monotonicity and EV@R-norms }

Based on the properties of the Rényi entropy and the infimum representation
of $\EVaR$ we deduce limiting risk measures of $\EVaR^{p}$ as well
as a general monotonicity in the dual parameter $p^{\prime}$. We
further show that the Entropic Value-at-Risk based on Rényi entropy
is convex in its dual order $p^{\prime}$. 
\begin{lem}[The special case $p=1$]
\label{lem:Extreme1}For $\alpha\in\left(0,1\right)$ and $Y\in L^{1}$
we have
\[
\EVaR_{\alpha}^{p}(Y)\xrightarrow[p\downarrow1]{}\AVaR_{\alpha}(Y).
\]
\end{lem}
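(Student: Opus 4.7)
The natural approach is to compare the two infimum representations. Theorem~\ref{thm:primal1} supplies
\[
\EVaR^p_\alpha(Y) = \inf_{t \in \mathbb{R}} \Bigl\{t + (1-\alpha)^{-1/p}\,\|(Y-t)_+\|_p\Bigr\},
\]
and Remark~\ref{rem:7} recalls
\[
\AVaR_\alpha(Y) = \inf_{t \in \mathbb{R}} \Bigl\{t + (1-\alpha)^{-1}\,\E(Y-t)_+\Bigr\}.
\]
The convergence is then established by a matched two-sided bound.

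\textbf{Lower bound.} I would first show $\EVaR^p_\alpha(Y) \geq \AVaR_\alpha(Y)$ uniformly in $p > 1$ by comparing dual feasible sets. Any density $Z$ admissible in the $\AVaR_\alpha$ representation satisfies $\|Z\|_\infty \leq 1/(1-\alpha)$, whence Lemma~\ref{lem:R=0000E9nyi-Entropy-Mono} gives $H_{p'}(Z) \leq H_\infty(Z) = \log\|Z\|_\infty \leq \log\tfrac{1}{1-\alpha}$, so $Z$ is also $\EVaR^p_\alpha$-admissible. Taking $\liminf_{p \downarrow 1}$ yields one half of the claim.

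\textbf{Upper bound.} For the other direction, fix $\varepsilon > 0$ and a near-minimizer $t_\varepsilon$ of the AVaR objective, so that $t_\varepsilon + (1-\alpha)^{-1}\E(Y-t_\varepsilon)_+ \leq \AVaR_\alpha(Y) + \varepsilon$. Then the $\EVaR^p_\alpha$ infimum representation gives
\[
\EVaR^p_\alpha(Y) \leq t_\varepsilon + (1-\alpha)^{-1/p}\,\|(Y-t_\varepsilon)_+\|_p.
\]
Since $(1-\alpha)^{-1/p} \to (1-\alpha)^{-1}$ as $p \downarrow 1$, the passage to the limit reduces to verifying the continuity $\|(Y-t_\varepsilon)_+\|_p \to \E(Y-t_\varepsilon)_+$ at $p = 1$; letting then $\varepsilon \downarrow 0$ closes the argument.

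\textbf{Main obstacle.} The only delicate step is precisely this $L^p$-norm continuity at $p = 1$. For $Y \in L^\infty$ it follows immediately from dominated convergence applied to $g^p$ with $g := (Y-t_\varepsilon)_+$ bounded, using that $p \mapsto \|g\|_p$ is non-decreasing on a probability space. For general $Y \in L^1$ one reduces to the bounded case by the truncations $Y_n := Y \wedge n$: monotone convergence in the AVaR infimum representation yields $\AVaR_\alpha(Y_n) \uparrow \AVaR_\alpha(Y)$, and combining the bounded-case conclusion $\limsup_{p \downarrow 1} \EVaR^p_\alpha(Y_n) \leq \AVaR_\alpha(Y_n)$ with the $\EVaR^p_\alpha$-subadditivity $\EVaR^p_\alpha(Y) \leq \EVaR^p_\alpha(Y_n) + \EVaR^p_\alpha\bigl((Y-n)_+\bigr)$, and finally letting $n \to \infty$, produces the matching upper bound $\limsup_{p \downarrow 1} \EVaR^p_\alpha(Y) \leq \AVaR_\alpha(Y)$.
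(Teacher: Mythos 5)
Your lower bound is essentially the paper's: the paper exhibits the specific $\AVaR$-optimal density $\frac{1}{1-\alpha}\one_{\{Y\ge\VaR_\alpha(Y)\}}^{\alpha}$ and notes it is feasible for every $p>1$, while you make the same observation for every $\AVaR$-feasible density via Lemma~\ref{lem:R=0000E9nyi-Entropy-Mono}; same idea. Your upper bound is a genuinely different route: the paper stays on the dual side (as $p^{\prime}\to\infty$ the entropy constraint forces $Z\le\frac{1}{1-\alpha}$), whereas you invoke the infimum representation of Theorem~\ref{thm:primal1} and reduce everything to continuity of $p\mapsto\|(Y-t)_+\|_p$ at $p=1$. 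For bounded $Y$ this is correct and, if anything, more explicit than the paper's argument.

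The genuine gap is the final truncation step. From $\limsup_{p\downarrow1}\EVaR_{\alpha}^{p}(Y)\le\AVaR_{\alpha}(Y_n)+\limsup_{p\downarrow1}\EVaR_{\alpha}^{p}\bigl((Y-n)_+\bigr)$, letting $n\to\infty$ accomplishes nothing unless the second term is shown to vanish (or at least be finite and small), and you give no control of it: the crude bound $\EVaR_{\alpha}^{p}\bigl((Y-n)_+\bigr)\le(1-\alpha)^{-1/p}\,\|(Y-n)_+\|_p$ is useless because $(Y-n)_+$ need not lie in any $L^{p}$ with $p>1$ when $Y$ is merely in $L^{1}$, and applying the convergence claim itself to the unbounded variable $(Y-n)_+$ is circular. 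Worse, if $Y_+\in L^{1}\setminus\bigcup_{p>1}L^{p}$ (a heavy Pareto-type tail with $\E Y_+<\infty$ but $\E Y_+^{p}=\infty$ for all $p>1$), then $\EVaR_{\alpha}^{p}\bigl((Y-n)_+\bigr)=+\infty$ for every $p>1$ and every $n$; indeed $\EVaR_{\alpha}^{p}(Y)=+\infty$ itself, since mixing the constant density $\one$ with a normalized truncation of $Y^{p-1}$ yields feasible densities making $\E YZ$ arbitrarily large. So no truncation can close the statement on all of $L^{1}$; what your argument actually establishes is the claim on $L^{\infty}$, and a one-line modification (dominate $(Y-t)_+^{p}$ by $(Y-t)_++(Y-t)_+^{p_0}$ and use dominated convergence) extends it to any $Y\in L^{p_0}$, $p_0>1$, with no truncation at all --- which is the natural domain on which the lemma, and also the paper's own dual-side argument, should be read. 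A smaller point: $\AVaR_{\alpha}(Y_n)\uparrow\AVaR_{\alpha}(Y)$ does not follow by "monotone convergence in the infimum representation" (you cannot swap $\lim_n$ and $\inf_t$ for free); use subadditivity instead, $\AVaR_{\alpha}(Y)\le\AVaR_{\alpha}(Y_n)+\frac{1}{1-\alpha}\E(Y-n)_+$.
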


\begin{proof}
From the definition of Rényi entropy we have
\[
\lim_{p^{\prime}\to\infty}H_{p^{\prime}}(Z)=\lim_{p^{\prime}\to\infty}\frac{p^{\prime}}{p^{\prime}-1}\log\left\Vert Z\right\Vert _{p^{\prime}}=\log\left\Vert Z\right\Vert _{\infty},
\]
so that the inequality $\log\left\Vert Z\right\Vert _{\infty}=\lim_{p^{\prime}\to\infty}H_{p^{\prime}}(Z)\le\log\frac{1}{1-\alpha}$
is satisfied for every $Z$ satisfying the constraint in~\eqref{eq: EVaRDef1}
and $p>1$. Therefore $Z\le\frac{1}{1-\alpha}$, and consequently
\[
\lim_{p\to1}\EVaR_{\alpha}^{p}(Y)\le\AVaR_{\alpha}(Y).
\]
Consider further the generalized indicator function
\[
\one_{\left\{ Y\geq\VaR_{\alpha}(Y)\right\} }^{\alpha}:=\begin{cases}
\one_{\left\{ Y\geq\VaR_{\alpha}(Y)\right\} } & \text{if }P(Y=\VaR_{\alpha}(Y))=0;\\
\one_{\left\{ Y>\VaR_{\alpha}(Y)\right\} }+\frac{P(Y\leq\VaR_{\alpha}(Y))-\alpha}{P(Y=\VaR_{\alpha}(Y))}\one_{\left\{ Y=\VaR_{\alpha}(Y)\right\} } & \text{if }P(Y=\VaR_{\alpha}(Y))>0
\end{cases}
\]
and define the random variable $Z=\frac{1}{1-\alpha}\one_{\left\{ Y\geq\VaR_{\alpha}(Y)\right\} }^{\alpha}$.
Then $Z$ is a density with $H_{\infty}(Z)=\log\frac{1}{1-\alpha}$
and thus $\lim_{p\to1}\EVaR_{\alpha}^{p}(Y)\ge\AVaR_{\alpha}(Y)$.
Hence the assertion.
\end{proof}
\begin{lem}[Special case $p=0$]
\label{lem:Extreme2}For $\alpha\in\left(0,1\right)$ and $Y\in L^{\infty}$
it holds that 
\[
\EVaR_{\alpha}^{p}(Y)\xrightarrow[p\uparrow0]{}\left\Vert Y\right\Vert _{\infty}.
\]
\end{lem}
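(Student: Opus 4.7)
The plan is to establish matching upper and lower bounds. The upper bound $\EVaR_\alpha^p(Y)\le\esssup Y$ holds for every $p$ directly from the defining supremum~\eqref{eq: EVaRDef1}, as any density $Z$ satisfies $\E YZ\le(\esssup Y)\,\E Z=\esssup Y$. Alternatively one may invoke the infimum representation of Theorem~\ref{thm:primal2}: for any fixed $t>\esssup Y$ the factor $(1-\alpha)^{-1/p}$ tends to $0$ as $p\uparrow 0^-$ (since $1-\alpha<1$ and $-1/p\to+\infty$), whereas $\|t-Y\|_p$ remains bounded, because $t-Y$ is essentially bounded and essentially bounded away from $0$. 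Either route gives $\limsup_{p\uparrow 0}\EVaR_\alpha^p(Y)\le\esssup Y$.

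For the lower bound the plan is to exhibit explicit feasible densities whose expectation against $Y$ approaches $M:=\esssup Y$. For small $\varepsilon,\delta\in(0,1)$ put $A_\varepsilon:=\{Y\ge M-\varepsilon\}$, $\beta_\varepsilon:=P(A_\varepsilon)>0$, and set
\[
Z_{\varepsilon,\delta}:=\frac{1-\delta}{\beta_\varepsilon}\one_{A_\varepsilon}+\frac{\delta}{1-\beta_\varepsilon}\one_{A_\varepsilon^{\mathsf c}}.
\]
Then $Z_{\varepsilon,\delta}>0$, $\E Z_{\varepsilon,\delta}=1$, and $\E YZ_{\varepsilon,\delta}\ge(1-\delta)(M-\varepsilon)+\delta\,\essinf Y$. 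Feasibility is verified via
\[
\E Z_{\varepsilon,\delta}^{p'}=\Bigl(\tfrac{1-\delta}{\beta_\varepsilon}\Bigr)^{p'}\beta_\varepsilon+\Bigl(\tfrac{\delta}{1-\beta_\varepsilon}\Bigr)^{p'}(1-\beta_\varepsilon)\xrightarrow[p'\downarrow 0]{}1,
\]
so that $H_{p'}(Z_{\varepsilon,\delta})=\tfrac{1}{p'-1}\log\E Z_{\varepsilon,\delta}^{p'}\to 0$ as $p\uparrow 0^-$, while $\log\tfrac{1}{1-\alpha}>0$. Hence $Z_{\varepsilon,\delta}$ is admissible in~\eqref{eq: EVaRDef1} for all $p$ sufficiently close to $0$, giving $\EVaR_\alpha^p(Y)\ge\E YZ_{\varepsilon,\delta}$. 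Sending first $\delta\downarrow 0$ and then $\varepsilon\downarrow 0$ produces $\liminf_{p\uparrow 0}\EVaR_\alpha^p(Y)\ge M$.

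I expect the main obstacle to be the lower bound, where one must simultaneously concentrate the density on $\{Y\approx M\}$ \emph{and} keep it feasible for the R\'enyi-entropy constraint $H_{p'}(Z)\le\log\tfrac{1}{1-\alpha}$. The decisive observation is that this constraint degenerates as $p'\downarrow 0$: for every bounded, strictly positive density $Z$ one has $H_{p'}(Z)\to H_0(Z)=-\log P(Z>0)=0$, so every such density eventually becomes admissible. This is precisely why the two-level construction above succeeds, whereas the simpler candidate $\beta_\varepsilon^{-1}\one_{A_\varepsilon}$, supported strictly on $A_\varepsilon$, has entropy $-\log\beta_\varepsilon$ (independent of $p'$, by Lemma~\ref{lem:R=0000E9nyi-Entropy-Mono}) and would be admissible only when $\beta_\varepsilon\ge 1-\alpha$.
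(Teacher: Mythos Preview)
Your argument is correct and in fact repairs a subtle gap in the paper's own proof. Both proofs pursue the same strategy for the lower bound---produce a feasible density concentrated on $A_\varepsilon=\{Y>\esssup Y-\varepsilon\}$---but the paper uses the one-level density $Z=\tfrac{1}{P(A_\varepsilon)}\one_{A_\varepsilon}$ and asserts that ``for $p'$ small enough'' the constraint $H_{p'}(Z)\le\log\tfrac{1}{1-\alpha}$ is satisfied, since $x^{p'}\to 1$. As you explicitly observe, that density is precisely the constant-entropy example of Lemma~\ref{lem:R=0000E9nyi-Entropy-Mono}: one computes $\E Z^{p'}=P(A_\varepsilon)^{1-p'}$ and hence $H_{p'}(Z)=-\log P(A_\varepsilon)$ for \emph{every} $p'$, so the paper's feasibility claim holds only when $P(A_\varepsilon)\ge 1-\alpha$, which need not be true for small $\varepsilon$. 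Your two-level density $Z_{\varepsilon,\delta}$ is strictly positive, so $H_{p'}(Z_{\varepsilon,\delta})\to H_0(Z_{\varepsilon,\delta})=0$ genuinely as $p'\downarrow 0$, making it admissible for all $p$ sufficiently close to $0$ regardless of $P(A_\varepsilon)$; the auxiliary parameter $\delta$ is then sent to $0$ at no cost. Your upper-bound discussion (either directly from $\E YZ\le\esssup Y$ or via Theorem~\ref{thm:primal2}) is also correct and makes explicit what the paper leaves unstated.
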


\begin{proof}
Set $A_{\varepsilon}:=\left\{ Y>\left\Vert Y\right\Vert _{\infty}-\varepsilon\right\} $
for $\varepsilon>0$. Set $\lambda:=\frac{1}{2}P\left(A_{\varepsilon}\right)$
and consider the density 
\[
Z:=\frac{1}{2\lambda}\one_{A_{\varepsilon}}.
\]
Note now that $x^{p^{\prime}}\xrightarrow[p^{\prime}\to0]{}1$ for
every $x>0$, so it is possible to find $p^{\prime}>0$ small enough
so that $\frac{1}{p^{\prime}-1}\log\E Z^{p^{\prime}}\le\log\frac{1}{1-\alpha}$.
It holds that $\E\left|Y\right|Z\ge\frac{1}{2\lambda}\left(\left\Vert Y\right\Vert _{\infty}-\varepsilon\right)P\left(A_{\varepsilon}\right)=\left\Vert Y\right\Vert _{\infty}-\varepsilon$,
from which the assertion follows. 
\end{proof}

\subsection{Monotonicity and Convexity}

The following theorem states that the Average Value-at-Risk and the
Entropic Value-at-Risk are extremal cases for the risk measure based
on Rényi entropy. It is more convenient to state the result in $p^{\prime}$
than $p.$ This is due to monotonicity of $p^{\prime}\mapsto H_{p^{\prime}}(Z)$
from Lemma~\ref{lem:R=0000E9nyi-Entropy-Mono} and the fact that
$p^{\prime}$ is less than $0$ whenever $p\in(0,1)$.
\begin{thm}[Monotonicity]
\label{thm:Monotonicity}Let $\alpha\in\left(0,1\right)$, then for
$p_{1}^{\prime}<0<p_{2}^{\prime}<1\leq p_{3}^{\prime}\leq p_{4}^{\prime}$
and their corresponding Hölder conjugates $p_{i}=\frac{p_{i}^{\prime}}{p_{i}^{\prime}-1}$
it holds that
\[
\AVaR_{\alpha}(\cdot)\leq\EVaR_{\alpha}^{p_{4}}(\cdot)\leq\EVaR_{\alpha}^{p_{3}}(\cdot)\leq\EVaR_{\alpha}(\cdot)\leq\EVaR_{\alpha}^{p_{2}}(\cdot)\leq\EVaR_{\alpha}^{p_{1}}(\cdot)=\left\Vert \cdot\right\Vert _{\infty}.
\]
\[
\left\Vert \cdot\right\Vert _{\infty}\geq\EVaR_{\alpha}^{p_{1}}(\cdot)\geq\EVaR_{\alpha}^{p_{2}}(\cdot)\geq\EVaR_{\alpha}(\cdot)\geq\EVaR_{\alpha}^{p_{3}}(\cdot)\geq\EVaR_{\alpha}^{p_{4}}(\cdot)\geq\AVaR_{\alpha}(\cdot)
\]
The mapping $p^{\prime}\mapsto\EVaR_{\alpha}^{p}(Y)$ is continuous
for all $Y$, for which the expression is finite.
\end{thm}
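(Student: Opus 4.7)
The ordering chain follows directly from the monotonicity of the Rényi entropy in its order (Lemma~\ref{lem:R=0000E9nyi-Entropy-Mono}): for $p_i^\prime \le p_j^\prime$ the constraint $H_{p_j^\prime}(Z)\le\log\tfrac{1}{1-\alpha}$ is more restrictive than $H_{p_i^\prime}(Z)\le\log\tfrac{1}{1-\alpha}$, so the feasible set of densities in~\eqref{eq: EVaRDef1} shrinks and the supremum decreases. The continuity of $q\mapsto H_q(Z)$ noted in~\eqref{eq:H1} guarantees that this comparison remains valid across $p^\prime=1$, delivering the middle inequalities $\EVaR_{\alpha}^{p_4}\le\EVaR_{\alpha}^{p_3}\le\EVaR_{\alpha}\le\EVaR_{\alpha}^{p_2}$.

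The two endpoints close the chain. Since $p_1\in(0,1)$, Theorem~\ref{thm:RVaR(p<1)} gives $\EVaR_{\alpha}^{p_1}(\cdot)=\|\cdot\|_{\infty}$; the inclusion $\EVaR_{\alpha}^{p_2}\le\EVaR_{\alpha}^{p_1}$ is then the monotonicity step applied once more. For the lower bound, Lemma~\ref{lem:Extreme1} yields $\AVaR_{\alpha}(Y)=\lim_{p\downarrow 1}\EVaR_{\alpha}^{p}(Y)$, and by the monotonicity just established this limit sits below every $\EVaR_{\alpha}^{p_4}(Y)$ with finite $p_4>1$.

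For continuity of $\phi(p^\prime):=\EVaR_{\alpha}^{p}(Y)$ on the set where it is finite, the plan is to combine upper and lower semi-continuity. Upper semi-continuity in $p^\prime$ follows from the infimum representations in Theorems~\ref{thm:primal1} and~\ref{thm:primal2}: each integrand is jointly continuous in $(t,p)$, and a pointwise infimum of continuous functions is upper semi-continuous. Together with the monotonicity of $\phi$ this already forces left-continuity. For right-continuity I use the dual representation~\eqref{eq: EVaRDef1}: given an $\varepsilon$-optimal density $Z^*$ at $p^\prime_0$, the mixture $Z^\varepsilon:=(1-\varepsilon)Z^*+\varepsilon\cdot\one$ is again a density, and a Jensen argument applied to $z\mapsto z^{p^\prime_0}$ (convex for $p^\prime_0>1$, concave for $p^\prime_0\in(0,1)$) shows that $H_{p^\prime_0}(Z^\varepsilon)$ is strictly below $\log\tfrac{1}{1-\alpha}$. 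Since $p^\prime\mapsto H_{p^\prime}(Z^\varepsilon)$ is continuous, $Z^\varepsilon$ remains feasible for $p^\prime$ in a neighborhood of $p^\prime_0$, so $\phi(p^\prime)\ge\E YZ^\varepsilon\to\E YZ^*$ as $\varepsilon\downarrow 0$. Continuity at the exceptional parameters $p^\prime\in\{0,1,\infty\}$ is supplied respectively by Lemma~\ref{lem:Extreme2}, the identity~\eqref{eq:H1}, and Lemma~\ref{lem:Extreme1}.

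The main obstacle is the continuity claim, because the four regimes $p^\prime<0$, $p^\prime\in(0,1)$, $p^\prime=1$, $p^\prime>1$ are governed by primal representations of distinct analytic form and must be glued consistently at the transition values $p^\prime\in\{0,1,\infty\}$; the perturbation $Z^\varepsilon$ is the device that produces lower semi-continuity uniformly across these regimes.
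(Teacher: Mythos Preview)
Your proof of the ordering chain coincides with the paper's: both invoke the monotonicity of R\'enyi entropy (Lemma~\ref{lem:R=0000E9nyi-Entropy-Mono}) for the interior inequalities, Theorem~\ref{thm:RVaR(p<1)} for the equality $\EVaR_{\alpha}^{p_1}=\|\cdot\|_\infty$, and Lemmas~\ref{lem:Extreme1} and~\ref{lem:Extreme2} for the endpoints.

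For continuity the routes diverge. The paper dispatches it in a single citation to Proposition~\ref{prop: Hqconvex}, the convexity-type estimate $(\tilde q-1)H_{\tilde q}(Z)\le(q-1)H_q(Z)+(\tilde q-q)H_\infty(Z)$; the intended mechanism is presumably that this inequality controls how the feasible set in~\eqref{eq: EVaRDef1} varies with the order, but the details are not spelled out. Your argument is longer and more explicit: upper semicontinuity from the infimum representations of Theorems~\ref{thm:primal1} and~\ref{thm:primal2}, lower semicontinuity via the perturbed density $Z^\varepsilon=(1-\varepsilon)Z^*+\varepsilon\one$ (Jensen gives strict feasibility, then continuity of $p'\mapsto H_{p'}(Z^\varepsilon)$ propagates it to a neighborhood), and separate bookkeeping at the boundary values $p'\in\{0,1,\infty\}$. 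This is a genuinely different and more self-contained approach; it buys you a transparent argument that does not rely on interpreting the convexity estimate, at the cost of handling several regimes by hand. One small point: your appeal to~\eqref{eq:H1} at $p'=1$ is really about continuity of $q\mapsto H_q(Z^\varepsilon)$ there, which is exactly what feeds your perturbation argument, so the case $p'_0=1$ is in fact covered by the same mechanism rather than requiring a separate device.
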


\begin{proof}
The result follows from Lemmas~\ref{lem:Extreme1},~\ref{lem:Extreme2}
and~\ref{lem:R=0000E9nyi-Entropy-Mono}. The last equality follows
from Theorem~\ref{thm:RVaR(p<1)}. Continuity follows from Lemma~\ref{prop: Hqconvex}.
\end{proof}
From Theorem~\ref{thm:Monotonicity} we know that $p^{\prime}\mapsto\EVaR_{\alpha}^{p}$
is decreasing. We now show that this mapping is not only monotone
but logarithmically convex.
\begin{thm}[Log-convexity of the Entropic Value-at-Risk]
\label{thm:log-convex}For $1<p_{0},\,p_{1}$ and $0\leq\lambda\leq1$
define $p_{\lambda}^{\prime}:=(1-\lambda)p_{0}^{\prime}+\lambda p_{1}^{\prime}$,
where $p_{0}^{\prime}$ ($p_{1}^{\prime}$, resp.) is the Hölder conjugate
exponent of $p_{0}$ ($p_{1}$, resp.). Then, for $Y\in L^{\infty}$,
$\alpha\in(0,1)$ and $p_{\lambda}:=\frac{p_{\lambda}^{\prime}}{p_{\lambda}^{\prime}-1}$
we have that
\[
\EVaR_{\alpha}^{p_{\lambda}}\left(\left|Y\right|\right)\leq\left(\EVaR_{\alpha}^{p_{0}}\left(\left|Y\right|\right)\right)^{1-\lambda}\cdot\left(\EVaR_{\alpha}^{p_{1}}\left(\left|Y\right|\right)\right)^{\lambda}.
\]
That is, the Entropic Value-at-Risk is logarithmically convex in its
conjugate order.
\end{thm}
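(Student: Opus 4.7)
The plan is to invoke the infimum representation of $\EVaR^p_\alpha$ from Theorem~\ref{thm:primal1}, namely $\EVaR^p_\alpha(|Y|) = \inf_{t\ge 0}g_p(t)$ with $g_p(t) := t + M_p\|(|Y|-t)_+\|_p$ and $M_p := (1-\alpha)^{-1/p}$, and then interpolate between the exponents $p_0,p_1$ via Lyapunov's inequality. Restricting the infimum to $t\ge 0$ is permitted since $|Y|\ge 0$.

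First I would observe that the hypothesis $p_\lambda' = (1-\lambda)p_0' + \lambda p_1'$ combined with $\tfrac{1}{p}+\tfrac{1}{p'}=1$ yields the dual identity $\tfrac{1}{p_\lambda} = (1-\lambda)\tfrac{1}{p_0} + \lambda \tfrac{1}{p_1}$, so in particular $M_{p_\lambda} = M_{p_0}^{1-\lambda}M_{p_1}^\lambda$. Lyapunov's inequality (log-convexity of $L^p$-norms in $1/p$) then gives, for each fixed $t \ge 0$,
\[
M_{p_\lambda}\|(|Y|-t)_+\|_{p_\lambda} \le \bigl(M_{p_0}\|(|Y|-t)_+\|_{p_0}\bigr)^{1-\lambda}\bigl(M_{p_1}\|(|Y|-t)_+\|_{p_1}\bigr)^\lambda.
\]
Combined with the elementary inequality $t + x^{1-\lambda}y^\lambda \le (t+x)^{1-\lambda}(t+y)^\lambda$ for $t,x,y \ge 0$---which I would derive from the concavity and positive $1$-homogeneity of $\phi(x,y) := x^{1-\lambda}y^\lambda$ via $\phi(t+x,t+y) \ge \phi(t,t) + \phi(x,y) = t + x^{1-\lambda}y^\lambda$---this upgrades to the pointwise bound $g_{p_\lambda}(t) \le g_{p_0}(t)^{1-\lambda}g_{p_1}(t)^\lambda$ for every $t \ge 0$.

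The main obstacle is the final step: concluding log-convexity would require $\inf_t g_{p_\lambda}(t) \le (\inf_t g_{p_0}(t))^{1-\lambda}(\inf_t g_{p_1}(t))^\lambda$, but the minimizers $t^*_p$ of $g_p$ generally move with $p$, so the infimum of a weighted geometric mean does not decouple into the weighted geometric mean of the infima. I would handle this by passing to the dual representation~\eqref{eq: EVaRDef1}: writing $\EVaR^{p_\lambda}_\alpha(|Y|) = \E|Y|Z^*$ for an optimal density $Z^*$ with $\|Z^*\|_{p_\lambda'} \le M_{p_\lambda}$, the goal becomes to construct densities $Z_0,Z_1$ feasible at the levels $p_0',p_1'$ such that $Z^* \le Z_0^{1-\lambda}Z_1^\lambda$ pointwise, after which Hölder's inequality with exponents $\tfrac{1}{1-\lambda},\tfrac{1}{\lambda}$ yields $\E|Y|Z^* \le (\E|Y|Z_0)^{1-\lambda}(\E|Y|Z_1)^\lambda \le \EVaR^{p_0}_\alpha(|Y|)^{1-\lambda}\EVaR^{p_1}_\alpha(|Y|)^\lambda$. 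The naive construction $Z_i := (Z^*)^{a_i}/\E(Z^*)^{a_i}$ with $(1-\lambda)a_0 + \lambda a_1 = 1$ produces the opposite pointwise inequality because the Hölder scaling $K = (\E(Z^*)^{a_0})^{1-\lambda}(\E(Z^*)^{a_1})^\lambda \ge \E Z^* = 1$; engineering the correct construction---for instance by exploiting the explicit form $Z^* \propto (|Y|-t^*)_+^{p_\lambda-1}$ from Remark~\ref{rem:7}---is where the real content of the theorem lies.
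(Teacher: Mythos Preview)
Your proposal is not a proof: it is two sketches that each halt exactly where the work begins, and you say so yourself (``the main obstacle is the final step''; ``engineering the correct construction\dots is where the real content of the theorem lies''). Both obstacles are genuine. On the primal side, the pointwise bound $g_{p_\lambda}(t)\le g_{p_0}(t)^{1-\lambda}g_{p_1}(t)^{\lambda}$ only gives $\inf_t g_{p_\lambda}\le\inf_t\bigl[g_{p_0}(t)^{1-\lambda}g_{p_1}(t)^{\lambda}\bigr]$, and the latter sits \emph{above} $(\inf_t g_{p_0})^{1-\lambda}(\inf_t g_{p_1})^{\lambda}$, so nothing follows. (Incidentally, the claim that one may restrict to $t\ge0$ because $|Y|\ge0$ is false---Theorem~\ref{thm:Normequi1} exhibits minimizers $t^*<0$---so the right-hand infima over $t\ge0$ need not equal the $\EVaR$'s either.) On the dual side the situation is worse than you indicate: if $Z_0,Z_1$ are densities and $Z^*\le Z_0^{1-\lambda}Z_1^{\lambda}$ pointwise, then integrating and applying H\"older gives $1=\E Z^*\le\E\bigl[Z_0^{1-\lambda}Z_1^{\lambda}\bigr]\le(\E Z_0)^{1-\lambda}(\E Z_1)^{\lambda}=1$, forcing equality everywhere and hence $Z_0=Z_1=Z^*$ a.s. But then feasibility at level $p_1'>p_\lambda'$ would require $H_{p_1'}(Z^*)\le\log\frac{1}{1-\alpha}$, which fails generically since $H_{p_\lambda'}(Z^*)=\log\frac{1}{1-\alpha}$ at the optimum and $q\mapsto H_q(Z^*)$ is nondecreasing. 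So no pointwise factorization of the kind you seek exists, and the explicit form of $Z^*$ from Remark~\ref{rem:7} cannot rescue it.

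The paper takes neither route. It carries out a direct differential analysis: the envelope theorem applied to~\eqref{eq: EVaRinf1} produces a closed expression for $\frac{\mathrm d}{\mathrm dp'}\EVaR_\alpha^{p}(Y)$ in terms of the optimizer $x^*(p')$ and the optimal density~\eqref{eq:7-4}; one first shows this derivative is nondecreasing in $p'$ (convexity), then separately proves that $p'\mapsto x^*(p')$ is nondecreasing, and combines the two to show that $\frac{\mathrm d}{\mathrm dp'}\log\EVaR_\alpha^{p}(Y)$ is nondecreasing. The monotonicity of the optimizer is precisely the extra input that handles the ``moving $t^*$'' problem you flagged; your interpolation framework has no analogue of it.
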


\begin{proof}
The rather technical proof can be found in the appendix. There we
first derive a compact representation of the derivative $\frac{\mathrm{d}}{\mathrm{d}p^{\prime}}\EVaR_{\alpha}^{\frac{p^{\prime}}{p^{\prime}-1}}(Y)$
and consequently show first convexity and then log-convexity. 
\end{proof}
\begin{rem}
We emphasize that the above result on logarithmic convexity of the
function $p^{\prime}\mapsto\EVaR_{\alpha}^{\frac{p^{\prime}}{^{p^{\prime}-1}}}(Y)$
does not extent to the case $p^{\prime}\in(0,1)$. For $p^{\prime}<0$,
in contrast, we have $\EVaR_{\alpha}^{\frac{p^{\prime}}{^{p^{\prime}-1}}}(Y)=\esssup(Y)$
and convexity hence is obvious. 
\end{rem}

\subsection{Comparison with Hölder norms}

The remainder of this section is concerned with the norms generated
by $\EVaR$. We show that the $\EVaR_{\alpha}^{p}$-norm is equivalent
to Hölder norms ($p\not=\infty$), irrespective of the confidence
level $\alpha.$ The norms are equivalent for varying confidence level
$\alpha$ and $p$ fixed. 
\begin{thm}[Comparison with $L^{p}$]
\label{thm:Normequi1}The space $\left(L^{p},\EVaR_{\alpha}^{p}\left(\left|\cdot\right|\right)\right)$
is a Banach space for each $\alpha\in\left(0,1\right)$ and $p\in(1,\infty)$.
Furthermore, the norms $\left\Vert \cdot\right\Vert _{p}$ and $\EVaR_{\alpha}^{p}\left(\left|\cdot\right|\right)$
are equivalent, i.e., it holds that 
\begin{equation}
C\left\Vert \cdot\right\Vert _{p}\leq\EVaR_{\alpha}^{p}\left(\left|\cdot\right|\right)\leq\left(\frac{1}{1-\alpha}\right)^{\frac{1}{p}}\left\Vert \cdot\right\Vert _{p},\label{eq: Normequi1}
\end{equation}
where $C=1\wedge\left(\left(\frac{1}{1-\alpha}\right)^{\frac{1}{p-1}}-1\right)^{\frac{p-1}{p}}$.
The inequality~\eqref{eq: Normequi1} is sharp. 
\end{thm}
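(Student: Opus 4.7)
The plan is to get the upper bound from the dual representation~\eqref{eq:3-2}, the lower bound from the infimum representation (Theorem~\ref{thm:primal1}), and the Banach property for free from norm equivalence. That $Y\mapsto\EVaR_\alpha^p(|Y|)$ is a seminorm is inherited from coherence together with monotonicity applied to $|Y_1+Y_2|\le|Y_1|+|Y_2|$; positive definiteness follows from the trivial bound $\EVaR_\alpha^p(|Y|)\ge\E|Y|$ (attained by the density $Z\equiv1$). Norm equivalence with $\|\cdot\|_p$ then transfers completeness from $L^p$ to $(L^p,\EVaR_\alpha^p(|\cdot|))$. The upper bound itself is immediate: in~\eqref{eq:3-2} H\"older gives $\E|Y|Z\le\|Y\|_p\|Z\|_{p'}\le\beta\|Y\|_p$ with $\beta:=(1/(1-\alpha))^{1/p}$.

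The lower bound is the real work. Write $g(t):=t+\beta\|(|Y|-t)_+\|_p$, so that Theorem~\ref{thm:primal1} reads $\EVaR_\alpha^p(|Y|)=\inf_{t\in\mathbb{R}}g(t)$; I split this infimum at $t=0$. For $t\ge0$ the pointwise inequality $|Y|\le t+(|Y|-t)_+$ combined with Minkowski gives $\|Y\|_p\le t+\|(|Y|-t)_+\|_p\le g(t)$, hence $\inf_{t\ge0}g(t)\ge\|Y\|_p$. For $t\le0$ I set $u:=-t\ge0$; the elementary pointwise inequality $(a+b)^p\ge a^p+b^p$ (valid for $a,b\ge0$ and $p\ge1$) yields $\||Y|+u\|_p\ge(\|Y\|_p^p+u^p)^{1/p}$, hence $g(-u)\ge h(u):=-u+\beta(\|Y\|_p^p+u^p)^{1/p}$. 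A direct minimization of the convex function $h$ produces $\min_{u\ge0}h(u)=\gamma^{(p-1)/p}\|Y\|_p$, where $\gamma:=\beta^{p/(p-1)}-1=(1/(1-\alpha))^{1/(p-1)}-1$; the key algebraic fact is the identity $\beta(\gamma+1)^{1/p}=\gamma+1$, which collapses the first-order condition cleanly. Combining the two cases gives $\EVaR_\alpha^p(|Y|)\ge(1\wedge\gamma^{(p-1)/p})\|Y\|_p=C\|Y\|_p$.

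Sharpness of both bounds is witnessed by scaled indicators $Y=\one_A$. For the upper bound, $P(A)=1-\alpha$ produces the feasible density $Z=\one_A/(1-\alpha)$ that saturates $\|Z\|_{p'}=\beta$, so $\EVaR_\alpha^p(Y)=1=\beta\|Y\|_p$. For the lower bound in the non-trivial regime $\gamma<1$, I would let $P(A)\to0$: the optimizer in~\eqref{eq: EVaRinf1} sits at $t^*\sim-\gamma^{-1/p}P(A)^{1/p}$ (mirroring the interior minimizer of $h$ above), and a first-order expansion yields $\EVaR_\alpha^p(Y)\sim\gamma^{(p-1)/p}P(A)^{1/p}=C\|Y\|_p$; in the complementary regime $\gamma\ge1$ the constant $Y\equiv c$ trivially realises $C=1$. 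The main obstacle will be the $t\le0$ calculation together with the matching asymptotic expansion certifying sharpness; everything else is a clean application of H\"older, Minkowski, and the infimum representation.
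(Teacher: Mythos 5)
Your proposal is correct, but your lower-bound argument takes a genuinely different route from the paper. Both proofs get the upper bound from H\"older applied to the dual representation and certify its sharpness with $Y=\one_A$, $P(A)=1-\alpha$, and both identify the same constant $C=1\wedge\gamma^{(p-1)/p}$ with $\gamma=(1-\alpha)^{-1/(p-1)}-1$. For the lower bound, however, the paper first argues that it suffices to test on (normalized) indicators, passing through simple random variables, and then extracts the constant from the asymptotics of $\EVaR_{\alpha}^{p}\bigl(\varepsilon^{-1/p}\one_{A}\bigr)$ as $\varepsilon=P(A)\to0$, so the bound and its sharpness come out of one computation on a single extremal family; you instead prove the bound directly for arbitrary $Y$ by splitting the infimum representation at $t=0$: Minkowski handles $t\ge0$, while for $t=-u\le0$ the superadditivity $(a+b)^{p}\ge a^{p}+b^{p}$ gives $g(-u)\ge h(u)=-u+\beta\left(\|Y\|_{p}^{p}+u^{p}\right)^{1/p}$, and the scalar convex minimization of $h$ (your identity $\beta(\gamma+1)^{1/p}=\gamma+1$ is exactly right, and $h'(0^{+})=-1<0$ ensures the interior critical point is the minimizer) yields $\gamma^{(p-1)/p}\|Y\|_{p}$. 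This buys you a pointwise lower bound valid for all $Y\in L^{p}$ without any reduction step --- note that the paper's reduction to indicators leans on an inequality chain through $\sum_{i}\lambda_{i}\EVaR_{\alpha}^{p}(\one_{A_{i}})$ that subadditivity alone does not justify, so your direct argument is arguably the cleaner one --- while the paper's approach delivers sharpness simultaneously with the bound. Your sharpness check is only sketched, but the test family $\one_{A}$, $P(A)\to0$ (or, after normalization, evaluating the objective at the fixed $t=-\gamma^{-1/p}$ and letting $\varepsilon\to0$) is the same one the paper uses and the asymptotics you state do close it; the case $\gamma\ge1$ via constants is also fine. The seminorm/positive-definiteness/completeness remarks are routine and correct.
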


\begin{proof}
The right inequality follows directly from Hölder's inequality, cf.~\eqref{eq:7-1}.
To see that the right inequality is sharp consider the random variable
$Y=\one_{A}$ with $P\left(A\right)=1-\alpha$. Then it holds that
$\EVaR_{\alpha}^{p}(Y)=1$ and $\left\Vert Y\right\Vert _{p}=\left(1-\alpha\right)^{\frac{1}{p}}$. 

For the left inequality consider first the case where the optimal
$t^{*}\geq0$ so that 
\[
\EVaR_{\alpha}^{p}(Y)\geq\inf_{t>0}t+\left\Vert \left(Y-t\right)_{+}\right\Vert _{p}=\left\Vert Y\right\Vert _{p}.
\]
The case $t^{*}<0$ is more complicated. Consider a simple random
variable $Y=\sum_{i}\lambda_{i}\one_{A_{i}}$, then 
\[
C\left\Vert Y\right\Vert _{p}=C\left\Vert \sum_{i}\lambda_{i}\one_{A_{i}}\right\Vert _{p}\le C\sum_{i}\lambda_{i}\left\Vert \one_{A_{i}}\right\Vert _{p}\le\sum_{i}\lambda_{i}\EVaR_{\alpha}^{p}(\one_{A_{i}})=\EVaR(Y),
\]
therefore it is enough to consider random variables of the form $Y=C\one_{A}$.
Without loss of generality we may assume that $\left\Vert Y\right\Vert _{p}=1$
and consider $Y_{A}=\left(\frac{1}{\varepsilon}\right)^{\frac{1}{p}}\one_{A}$
where $\varepsilon:=P(A)$. Define the quantity $t^{*}=-\left(\left(1-\alpha\right)^{\frac{1}{1-p}}-1\right)^{-\frac{1}{p}}$,
which is negative for $\alpha<1-2^{1-p}$. Hence, for $\alpha<1-2^{1-p}$,
we have that 
\[
\EVaR_{\alpha}^{p}\left(Y_{\varepsilon}\right)\leq t^{*}+\left(\frac{1}{1-\alpha}\right)^{\frac{1}{p}}\left(\left(1-\varepsilon\right)\left(-t^{*}\right)^{p}+\varepsilon\left(\left(\frac{1}{\varepsilon}\right)^{\frac{1}{p}}-t^{*}\right)^{p}\right)^{\frac{1}{p}}.
\]
The right hand side is differentiable in $\varepsilon$ with derivative
\begin{align*}
\frac{\mathrm{d}}{\mathrm{d}\varepsilon}f(\varepsilon)= & \left(\frac{1}{1-\alpha}\right)^{\frac{1}{p}}\frac{1}{p}\left[\left(1-\varepsilon\right)\left(-t^{*}\right)^{p}+\varepsilon\left(\left(\varepsilon\right)^{-\frac{1}{p}}-t^{*}\right)\right]^{\frac{1}{p}-1}\\
 & \times\left[-\left(-t^{*}\right)^{p}+\left(\left(\frac{1}{\varepsilon}\right)^{\frac{1}{p}}-t^{*}\right)^{p}+\varepsilon p\left(-\frac{1}{p}\varepsilon^{-\frac{1}{p}-1}\right)\left(\left(\frac{1}{\varepsilon}\right)^{\frac{1}{p}}-t^{*}\right)^{p-1}\right].
\end{align*}
The first factor is positive, so that the derivative $\frac{\mathrm{d}}{\mathrm{d}\varepsilon}f(\varepsilon)$
is positive, if and only if
\begin{align}
0 & \leq-\left(-t^{*}\right)^{p}+\left(\left(\frac{1}{\varepsilon}\right)^{\frac{1}{p}}-t^{*}\right)^{p}+\varepsilon p\left(-\frac{1}{p}\varepsilon^{-\frac{1}{p}-1}\right)\left(\left(\frac{1}{\varepsilon}\right)^{\frac{1}{p}}-t^{*}\right)^{p-1}\label{eq:7-2}\\
 & =\left(\left(\frac{1}{\varepsilon}\right)^{\frac{1}{p}}-t^{*}\right)^{p-1}\left(-\varepsilon^{-\frac{1}{p}}+\varepsilon^{-\frac{1}{p}}-t^{*}\right)-\left(-t^{*}\right)^{p}\nonumber 
\end{align}
However, this is the case, as it is a consequence of the binomial
theorem. We now consider the limit 
\[
\lim_{\varepsilon\to0}\:t^{*}+\left(\frac{1}{1-\alpha}\right)^{\frac{1}{p}}\left(\left(1-\varepsilon\right)\left(-t^{*}\right)^{p}+\varepsilon\left(\left(\frac{1}{\varepsilon}\right)^{\frac{1}{p}}-t^{*}\right)^{p}\right)^{\frac{1}{p}}=t^{*}+\left(\frac{1}{1-\alpha}\right)^{\frac{1}{p}}\left(\left(-t^{*}\right)^{p}+1\right)^{\frac{1}{p}},
\]
which is a lower bound for $\EVaR_{\alpha}^{p}\left(Y_{\varepsilon}\right)$.
This is the optimal bound, since 
\[
t^{*}\in\argmin_{t\in\mathbb{R}}\:t+\left(\frac{1}{1-\alpha}\right)^{\frac{1}{p}}\left(\left(-t\right)^{p}+1\right)^{\frac{1}{p}}=-\left(\left(1-\alpha\right)^{\frac{1}{1-p}}-1\right)^{-\frac{1}{p}}.
\]
The optimal constant in~\eqref{eq: Normequi1} is thus given by $1\wedge\,t^{*}+\left(\frac{1+\left(-t^{*}\right)^{p}}{1-\alpha}\right)^{\frac{1}{p}}$,
i.e., $C=1\wedge\left(\left(1-\alpha\right)^{\frac{1}{1-p}}-1\right)^{\frac{p-1}{p}}$.
\end{proof}
\begin{thm}[Comparison with $L^{\infty}$]
\label{thm:Normequi2}For $\alpha\in(0,1)$ and $p<1$, the norms
$\EVaR_{\alpha}^{p}\left(\left|\cdot\right|\right)$ and $\left\Vert \cdot\right\Vert _{\infty}$
are equivalent. Indeed, we have that 
\begin{equation}
c\left\Vert \cdot\right\Vert _{\infty}\le\EVaR_{\alpha}^{p}\left(\left|\cdot\right|\right)\le\left\Vert \cdot\right\Vert _{\infty},\label{eq: Normequi2}
\end{equation}
where the constant $c=1-\left(1-\alpha\right){}^{-\frac{1}{p}}$ is
sharp. 
\end{thm}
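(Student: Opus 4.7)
I would split on the sign of $p$ and reduce everything to the infimum representation of Theorem~\ref{thm:primal2}. For $p\in(0,1)$, Theorem~\ref{thm:RVaR(p<1)} gives $\EVaR_{\alpha}^{p}(|Y|) = \|Y\|_{\infty}$, so the upper inequality holds with equality, and the lower inequality is automatic because the stated $c = 1-(1-\alpha)^{-1/p}$ is negative in that regime. The upper inequality for $p<0$ is also immediate from Definition~\ref{def:Entropic}: every admissible density $Z$ satisfies $\E|Y|Z \le \|Y\|_{\infty}\E Z = \|Y\|_{\infty}$. So the substantive content is the lower bound and the sharpness, both for $p<0$.

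For the lower bound I would invoke the representation
\[ \EVaR_{\alpha}^{p}(|Y|) = \inf_{t>\|Y\|_{\infty}}\left\{ t - (1-\alpha)^{-1/p}\,\|t-|Y|\|_{p}\right\}. \]
The key observation is that for $p<0$ the functional $\|\cdot\|_{p}$ is \emph{monotone increasing} on strictly positive random variables: if $0<A\le B$ a.s., then $A^{p}\ge B^{p}$, so $\E A^{p}\ge \E B^{p}$, and since $x\mapsto x^{1/p}$ reverses order on $(0,\infty)$ (as $1/p<0$), the two reversals yield $\|A\|_{p}\le \|B\|_{p}$. Applying this to $0<t-|Y|\le t$, which is valid for $t>\|Y\|_{\infty}$, I get $\|t-|Y|\|_{p}\le \|t\|_{p}=t$, hence
\[ t - (1-\alpha)^{-1/p}\,\|t-|Y|\|_{p}\ \ge\ t\bigl(1-(1-\alpha)^{-1/p}\bigr) = ct. \]
For $p<0$ one has $-1/p>0$ and $0<1-\alpha<1$, so $(1-\alpha)^{-1/p}<1$ and $c>0$; taking the infimum over $t>\|Y\|_{\infty}$ then yields $\EVaR_{\alpha}^{p}(|Y|)\ge c\|Y\|_{\infty}$.

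For sharpness I would test the inequality on the family $Y_{n}:=\one_{A_{n}}$ with $P(A_{n})=\beta_{n}\downarrow 0$, so $\|Y_{n}\|_{\infty}=1$. Evaluating the infimum representation at an arbitrary fixed $t=1+\varepsilon>1$ gives
\[ \EVaR_{\alpha}^{p}(Y_{n})\ \le\ (1+\varepsilon) - (1-\alpha)^{-1/p}\bigl((1+\varepsilon)^{p}(1-\beta_{n}) + \varepsilon^{p}\beta_{n}\bigr)^{1/p}. \]
Sending $\beta_{n}\to 0$ and then $\varepsilon\downarrow 0$ gives $\limsup_{n}\EVaR_{\alpha}^{p}(Y_{n})\le c$, which combined with the lower bound $\EVaR_{\alpha}^{p}(Y_{n})\ge c$ proves $\EVaR_{\alpha}^{p}(Y_{n})/\|Y_{n}\|_{\infty}\to c$, so no larger constant works. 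The main obstacle is orienting the inequality $\|t-|Y|\|_{p}\le t$ correctly for $p<0$ --- the functional is not a norm and not convex, so one cannot appeal to the triangle inequality --- but the two-step order-reversal argument above handles it cleanly; the limit interchange in the sharpness step is a routine application of pointwise convergence with the obvious bound.
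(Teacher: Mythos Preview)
Your proof is correct and follows essentially the same route as the paper: both use the infimum representation (Theorem~\ref{thm:primal2}) together with the monotonicity $\|t-|Y|\|_{p}\le t$ for $p<0$ to obtain the lower bound, and both establish sharpness via the indicators $\one_{A}$ with $P(A)\to 0$. Your execution is in fact slightly cleaner---you bound the infimand directly rather than locating the minimizer $t^{*}$, and your sharpness step (fix $t=1+\varepsilon$, send $\beta_{n}\to 0$, then $\varepsilon\downarrow 0$) avoids the derivative computation the paper uses to show monotonicity in $P(A)$.
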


\begin{proof}
By Theorem~\ref{thm:RVaR(p<1)} it is sufficient to consider the
case $p<0$. Then, by Theorem~\ref{thm:Monotonicity}, $\EVaR_{\alpha}^{p}\left(\left|Y\right|\right)\leq\left\Vert Y\right\Vert _{\infty}$
for all $Y\in L^{\infty}$. 

Without loss of generality let $Y\geq0$ and $\left\Vert Y\right\Vert _{\infty}=1$.
Let $t^{*}$ be the minimizer from the infimum representation~\eqref{eq:dualNorm-p>1}.
By Proposition~\ref{prop:objective} we know that $t^{*}\geq\left\Vert Y\right\Vert _{\infty}=1$
and thus 
\[
t^{*}=\left\Vert t^{*}\right\Vert _{p}\geq\left\Vert t^{*}-Y\right\Vert _{p}.
\]
It follows that 
\begin{align*}
\EVaR_{\alpha}^{p}\left(\left|Y\right|\right) & =\inf_{t>\esssup(Y)}\left\{ t-\left(\frac{1}{1-\alpha}\right)^{\nicefrac{1}{p}}\cdot\left\Vert t-Y\right\Vert _{p}\right\} \geq t^{*}-\left(\frac{1}{1-\alpha}\right)^{\nicefrac{1}{p}}t^{*}\\
 & =t^{*}\left(1-\left(1-\alpha\right){}^{-\frac{1}{p}}\right)\geq1-\left(1-\alpha\right){}^{-\frac{1}{p}}.
\end{align*}
We demonstrate that this constant is optimal for~\eqref{eq: Normequi2}.
Indeed, for every $\varepsilon>0$ consider the random variable $Y_{A}\coloneqq\one_{A}$
where $A$ is chosen such that $\varepsilon=P\left(A\right)$ for
which 
\begin{align*}
\EVaR_{\alpha}^{p}\left(Y_{A}\right) & =\inf_{t>1}\left\{ t-\left(\frac{1}{1-\alpha}\right)^{\frac{1}{p}}\left(\left(1-\varepsilon\right)\left(t\right)^{p}+\varepsilon\left(t-1\right)^{p}\right)^{\frac{1}{p}}\right\} .
\end{align*}
Similar to the previous proof we see that the right hand side is increasing
in $\varepsilon$ for each $t>1$ as 
\[
\frac{\mathrm{d}}{\mathrm{d}\varepsilon}\left(t-\left(\frac{1}{1-\alpha}\right)^{\frac{1}{p}}\left(\left(1-\varepsilon\right)\left(t\right)^{p}+\varepsilon\left(t-1\right)^{p}\right)^{\frac{1}{p}}\right)=-\left(\frac{1}{1-\alpha}\right)^{\frac{1}{p}}\left(-\left(t\right)^{p}+\left(t-1\right)^{p}\right)^{\frac{1}{p}}>0.
\]
Fixing a $t>1$ we can evaluate the limit of the right hand side 
\[
\lim_{\varepsilon\to0}\,\left\{ t-\left(\frac{1}{1-\alpha}\right)^{\frac{1}{p}}\left(\left(1-\varepsilon\right)\left(t\right)^{p}+\varepsilon\left(t-1\right)^{p}\right)^{\frac{1}{p}}\right\} =t-\left(\frac{1}{1-\alpha}\right)^{\frac{1}{p}}t=t\left(1-\left(1-\alpha\right)^{-\frac{1}{p}}\right).
\]
Taking the infimum over all feasible $t>1$ reveals that $c=1-(1-\alpha)^{-\frac{1}{p}}$
is the optimal constant in~\eqref{eq: Normequi2}. 
\end{proof}
We investigate the $\EVaR$ for different confidence levels $\alpha$.
It follows from~\eqref{eq:3-2} that $\EVaR_{\alpha}^{p}(Y)\le\EVaR_{\alpha^{\prime}}^{p}(Y)$
whenever $\alpha\le\alpha^{\prime}$. In addition, the following holds
true for nonnegative $Y\ge0$.
\begin{cor}[Comparison for different risk levels]
Let $\alpha\ge\alpha^{\prime}$, $p>1$ and $Y\ge0$. Then it holds
that
\[
\EVaR_{\alpha}^{p}(Y)\le\left(\left(\frac{1-\alpha}{1-\alpha^{\prime}}\right)^{\frac{1}{p-1}}-\left(\frac{1}{1-\alpha}\right)^{\frac{1}{1-p}}\right)^{\frac{1-p}{p}}\EVaR_{\alpha^{\prime}}^{p}(Y)
\]
and for $p<0$,
\[
\EVaR_{\alpha}^{p}(Y)\le\left(1-\left(1-\alpha^{\prime}\right){}^{-\frac{1}{p}}\right)^{-1}\EVaR_{\alpha^{\prime}}^{p}(Y).
\]
\end{cor}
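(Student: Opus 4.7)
The plan is to deduce both bounds from the norm comparisons already established in Theorems~\ref{thm:Normequi1} and~\ref{thm:Normequi2}. For $p>1$, I first observe that $\EVaR_{\alpha}^{p}(Y)\le(1/(1-\alpha))^{1/p}\,\|Y\|_{p}$ by H\"older's inequality applied to the supremum representation~\eqref{eq:3-2} (equivalently, by taking $t=0$ in the infimum representation of Theorem~\ref{thm:primal1}, which is admissible because $Y\ge0$ implies $(Y)_{+}=Y$). For the denominator, Theorem~\ref{thm:Normequi1} supplies the matching lower bound $\EVaR_{\alpha'}^{p}(Y)\ge\bigl((1/(1-\alpha'))^{1/(p-1)}-1\bigr)^{(p-1)/p}\,\|Y\|_{p}$. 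Dividing the two inequalities and canceling $\|Y\|_{p}$ gives $\EVaR_{\alpha}^{p}(Y)\le K\cdot\EVaR_{\alpha'}^{p}(Y)$ with the constant $K$ appearing in the statement.

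For $p<0$ the argument is parallel but uses $L^{\infty}$ in place of $L^{p}$: Theorem~\ref{thm:Normequi2} already yields both $\EVaR_{\alpha}^{p}(Y)\le\|Y\|_{\infty}$ (right inequality) and $\EVaR_{\alpha'}^{p}(Y)\ge\bigl(1-(1-\alpha')^{-1/p}\bigr)\|Y\|_{\infty}$ (left inequality). Combining the two produces the stated inequality directly, with no further simplification required.

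The real work is algebraic rather than analytic. I must verify that the ratio $(1/(1-\alpha))^{1/p}\big/\bigl((1/(1-\alpha'))^{1/(p-1)}-1\bigr)^{(p-1)/p}$ coincides with the expression $\bigl(((1-\alpha)/(1-\alpha'))^{1/(p-1)}-(1/(1-\alpha))^{1/(1-p)}\bigr)^{(1-p)/p}$ given in the corollary. Using the identity $(1/(1-\alpha))^{1/(1-p)}=(1-\alpha)^{1/(p-1)}$ and factoring $(1-\alpha)^{1/(p-1)}$ out of the bracketed difference leaves exactly the claimed form after raising to the power $(1-p)/p$. This bookkeeping is routine but must be executed carefully because of the reciprocal exponents, and it is the only step where the explicit formula, as opposed to the mere equivalence of norms, enters the proof.
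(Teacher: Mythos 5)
Your route is the same as the paper's: the corollary is obtained by chaining the upper estimate of $\EVaR_{\alpha}^{p}$ against the $L^{p}$- (resp.\ $L^{\infty}$-) norm with the lower estimate of $\EVaR_{\alpha'}^{p}$ from Theorem~\ref{thm:Normequi1} (resp.\ Theorem~\ref{thm:Normequi2}), and your algebraic identification of the constant, via $(1/(1-\alpha))^{1/(1-p)}=(1-\alpha)^{1/(p-1)}$, is correct. The $p<0$ half is complete: there the lower constant in Theorem~\ref{thm:Normequi2} is $c=1-(1-\alpha')^{-1/p}\in(0,1)$ and the chain gives exactly the stated bound.

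For $p>1$, however, you quote a lower bound that Theorem~\ref{thm:Normequi1} does not deliver. The theorem gives $C\,\lVert Y\rVert_{p}\le\EVaR_{\alpha'}^{p}(\lvert Y\rvert)$ only with $C=1\wedge\bigl((1/(1-\alpha'))^{1/(p-1)}-1\bigr)^{(p-1)/p}$, and the minimum with $1$ cannot be dropped: whenever $\alpha'>1-2^{1-p}$ (for $p=2$ this is already $\alpha'>\tfrac12$) the unclipped quantity exceeds $1$, and the inequality $\EVaR_{\alpha'}^{p}(Y)\ge\bigl((1/(1-\alpha'))^{1/(p-1)}-1\bigr)^{(p-1)/p}\lVert Y\rVert_{p}$ fails already for constant $Y$, where both $\EVaR_{\alpha'}^{p}(Y)$ and $\lVert Y\rVert_{p}$ coincide. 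In that parameter range your chain only yields $\EVaR_{\alpha}^{p}(Y)\le(1/(1-\alpha))^{1/p}\,\EVaR_{\alpha'}^{p}(Y)$, which is weaker than the corollary, whose constant is exactly $(1/(1-\alpha))^{1/p}$ divided by the unclipped quantity. To be fair, the paper's own one-line proof (``an application of Theorem~\ref{thm:Normequi1}'') has the same blind spot, so you have faithfully reproduced the intended argument; but as a proof of the statement as written you must either restrict to $\alpha'\le1-2^{1-p}$, replace the constant by $(1/(1-\alpha))^{1/p}\bigl(1\wedge((1/(1-\alpha'))^{1/(p-1)}-1)^{(p-1)/p}\bigr)^{-1}$, or supply an independent argument for the sharper constant in the remaining range.
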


\begin{proof}
The proof is an application of Theorem~\ref{thm:Normequi1} and Theorem~\ref{thm:Normequi2},
respectively. Therefore the $\EVaR_{\alpha}^{p}$ norms are equivalent
for different confidence levels. 
\end{proof}

\section{\label{sec:Dual-Norms}Dual norms}

As mentioned in the introduction (cf.~\eqref{eq:7-3}) every coherent
risk measure induces a semi-norm if applied to the absolute value
of the argument. We have already seen that 
\[
\left\Vert \cdot\right\Vert :=\EVaR_{\alpha}^{p}\left(\left|\cdot\right|\right)
\]
is in fact a norm on $L^{p}$ ( $L^{\infty}$ for $p<1$), respectively.
For these new norms we consider the associated dual norm on the dual
space $L^{p^{\prime}}$ given by 
\begin{equation}
\left\Vert Z\right\Vert _{\alpha,p^{\prime}}^{*}:=\sup_{\EVaR_{\alpha}^{p}(\left|Y\right|)\leq1}\E YZ.\label{eq: DualNorm-Def1}
\end{equation}
In what follows we give explicit representations for the dual norm
$\left\Vert \cdot\right\Vert _{\alpha,p^{\prime}}^{*}$. Further,
we describe the dual variables for which the maximum in~\eqref{eq: DualNorm-Def1}
is attained (if available). 

\subsection{Characterization of the dual norm}

The evaluation of the dual norm in~\eqref{eq: DualNorm-Def1} requires
computing the supremum over an infinite dimensional space of random
variables. Using the dual representations developed in Section~\ref{sec:Duality}
we can give an equivalent representation of those dual norms as a
supremum over real numbers, thus facilitating the evaluation of those
norms. 
\begin{cor}[Corollary to Theorem~\ref{thm:Normequi1}]
The norms $\left\Vert \cdot\right\Vert _{p^{\prime}}$ and $\left\Vert Z\right\Vert _{\alpha,p^{\prime}}^{*}$
are equivalent, it holds that 
\[
(1-\alpha)^{\frac{p^{\prime}-1}{p^{\prime}}}\left\Vert Z\right\Vert _{p^{\prime}}\le\left\Vert Z\right\Vert _{\alpha,p^{\prime}}^{*}\le\frac{1}{C}\left\Vert Z\right\Vert _{p^{\prime}},
\]
where $C$ is the constant given in Theorem~\ref{thm:Normequi1}.
The inequalities are sharp.
\end{cor}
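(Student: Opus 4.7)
The plan is to deduce both inequalities directly from the two-sided norm comparison in Theorem~\ref{thm:Normequi1} via a straightforward duality argument, and then to transfer the sharpness statements from the primal bounds to the dual bounds.

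First I would write the dual norm as
\[
\left\Vert Z\right\Vert _{\alpha,p^{\prime}}^{*}=\sup\bigl\{\E YZ\colon\EVaR_{\alpha}^{p}(|Y|)\le1\bigr\}
\]
and compare the feasible set with $L^{p}$-balls. By Theorem~\ref{thm:Normequi1}, $\EVaR_{\alpha}^{p}(|Y|)\le1$ implies $\|Y\|_{p}\le 1/C$, so Hölder's inequality gives $\E YZ\le(1/C)\|Z\|_{p^{\prime}}$, which yields the upper estimate $\|Z\|_{\alpha,p^{\prime}}^{*}\le(1/C)\|Z\|_{p^{\prime}}$. For the lower estimate I would use the other half of Theorem~\ref{thm:Normequi1}: every $Y$ with $\|Y\|_{p}\le(1-\alpha)^{1/p}$ satisfies $\EVaR_{\alpha}^{p}(|Y|)\le1$ and is therefore feasible in the supremum, so
\[
\left\Vert Z\right\Vert _{\alpha,p^{\prime}}^{*}\ge\sup_{\|Y\|_{p}\le(1-\alpha)^{1/p}}\E YZ=(1-\alpha)^{1/p}\,\|Z\|_{p^{\prime}},
\]
which is exactly $(1-\alpha)^{(p^{\prime}-1)/p^{\prime}}\|Z\|_{p^{\prime}}$ since $1/p=(p^{\prime}-1)/p^{\prime}$.

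For sharpness of the lower bound I would reuse the extremal example from Theorem~\ref{thm:Normequi1}: take $Y=\one_{A}$ with $P(A)=1-\alpha$, for which $\EVaR_{\alpha}^{p}(|Y|)=1$ and $\|Y\|_{p}=(1-\alpha)^{1/p}$, and pair it with $Z=\one_{A}/\|{\one_{A}}\|_{p^{\prime}}^{p^{\prime}-1}$ (or any $Z$ that saturates Hölder against $\one_{A}$) so that $\E YZ=(1-\alpha)^{1/p}\|Z\|_{p^{\prime}}$. Sharpness of the upper bound requires the limiting family $Y_{\varepsilon}=\varepsilon^{-1/p}\one_{A_{\varepsilon}}$ with $P(A_{\varepsilon})=\varepsilon\to0$, which attains the constant $C$ in Theorem~\ref{thm:Normequi1} in the limit; choosing $Z_{\varepsilon}=\one_{A_{\varepsilon}}/\varepsilon^{1/p}$ (up to normalization) and passing to $\varepsilon\to0$ gives $\E Y_{\varepsilon}Z_{\varepsilon}/\EVaR_{\alpha}^{p}(|Y_{\varepsilon}|)\to(1/C)\|Z_{\varepsilon}\|_{p^{\prime}}$ along the same sequence.

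The hard part will be the sharpness of the upper inequality: since the primal constant $C$ is itself only attained in a limit ($\varepsilon\searrow0$), the dual sharpness must also be formulated as a supremum that is approached but not attained, and one must be careful in matching the extremal $Y_{\varepsilon}$ with a compatible $Z_{\varepsilon}$ so that Hölder's inequality is (approximately) tight simultaneously with the primal bound. The remaining estimates are routine consequences of Hölder's inequality and Theorem~\ref{thm:Normequi1}.
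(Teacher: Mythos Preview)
Your proposal is correct and follows exactly the standard duality argument the paper intends: the corollary is stated without proof in the paper precisely because it is an immediate consequence of Theorem~\ref{thm:Normequi1} via the inclusion-of-balls/H\"older reasoning you give, and your transfer of sharpness using the same extremal families $Y=\one_{A}$ and $Y_{\varepsilon}=\varepsilon^{-1/p}\one_{A_{\varepsilon}}$ (paired with their H\"older-conjugate $Z$'s supported on the same sets) is the natural way to complete it. Your caution about the upper-bound constant being attained only in the limit is well placed and handled correctly.
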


\begin{prop}[The explicit dual norm for $p>1$]
\label{prop: DualNorm1}Let $p>1$ and $p^{\prime}=\frac{p}{p-1}.$
The dual norm $\left\Vert \cdot\right\Vert _{\alpha,p^{\prime}}^{*}$
of $\EVaR_{\alpha}^{p}\left(\left|\cdot\right|\right)$ is given by
\begin{equation}
\left\Vert Z\right\Vert _{\alpha,p^{\prime}}^{*}=\sup_{t\in\mathbb{R}}\,\frac{\E\left(t+\left|Z\right|^{p^{\prime}-1}\right)_{+}\left|Z\right|}{t+\left(\frac{1}{1-\alpha}\right)^{\frac{1}{p}}\left\Vert \left(t+\left|Z\right|^{q-1}\right)_{+}-t\right\Vert _{p}}.\label{eq:dualNorm-p>1}
\end{equation}
\end{prop}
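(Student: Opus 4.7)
The plan is to combine the infimum representation of Theorem~\ref{thm:primal1} with a Karush--Kuhn--Tucker analysis to identify the extremal densities. First I reduce to $Z\ge 0$: since $\E YZ \le \E|Y||Z|$ with equality for $Y=|Y|\sign(Z)$, while $\EVaR_\alpha^p(|Y|)$ is insensitive to the sign of $Y$, one has $\|Z\|_{\alpha,p'}^* = \|\,|Z|\,\|_{\alpha,p'}^*$ and the supremum in~\eqref{eq: DualNorm-Def1} may be restricted to $Y\ge 0$.

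Next I apply Theorem~\ref{thm:primal1} and write $\EVaR_\alpha^p(Y)=\inf_t b(Y,t)$ with $b(Y,t):=t+c\,\|(Y-t)_+\|_p$ and $c=(1/(1-\alpha))^{1/p}$. Because $b(Y,t)\ge\EVaR_\alpha^p(Y)\ge 0$ for every $t$, the identity $\E YZ/\EVaR_\alpha^p(Y)=\sup_t \E YZ/b(Y,t)$ holds (with the convention $a/0=+\infty$; the degenerate case $\EVaR_\alpha^p(Y)=0$ forces $Y=0$ and is irrelevant), and interchanging the two suprema gives
\[
\|Z\|_{\alpha,p'}^* \;=\; \sup_{t\in\mathbb{R},\,Y\ge 0}\, \frac{\E YZ}{t+c\|(Y-t)_+\|_p}.
\]
The ratio above is invariant under the scaling $(Y,t)\mapsto(\mu Y,\mu t)$ for any $\mu>0$, a fact I will exploit below.

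For fixed $t$ I maximize the ratio over $Y\ge 0$. Using the scale invariance to normalize $b(Y,t)=1$, the inner problem becomes $\max\E YZ$ subject to $Y\ge 0$ and $\|(Y-t)_+\|_p=(1-t)/c$. The Karush--Kuhn--Tucker conditions, combined with the equality case of Hölder's inequality applied to $W:=(Y-t)_+$ and $Z$ (in analogy with the identification of the optimal density in Remark~\ref{rem:7}), yield the explicit optimizer
\[
Y^* \;=\; (t+\lambda Z^{p'-1})_+
\]
for some $\lambda\ge 0$ determined by the active constraint: on $\{Y^*>t\}$ Hölder equality forces $Y^*-t\propto Z^{p'-1}$, while on $\{Y^*<t\}$ the non-negativity constraint $Y\ge 0$ is active, forcing $Y^*=0$.

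Substituting $Y^*$ into the ratio and invoking its joint invariance under $(t,\lambda)\mapsto(\mu t,\mu\lambda)$ collapses the two-parameter optimization to a single parameter $s:=t/\lambda\in\mathbb{R}$, since
\[
\frac{\E (t+\lambda Z^{p'-1})_+ Z}{t+c\|(t+\lambda Z^{p'-1})_+ - t\|_p} \;=\; \frac{\E(s+Z^{p'-1})_+ Z}{s+c\|(s+Z^{p'-1})_+ - s\|_p},
\]
and the supremum over $s\in\mathbb{R}$ delivers~\eqref{eq:dualNorm-p>1}. The main obstacle is the KKT step: one has to handle the $L^p$-equality constraint on $(Y-t)_+$ and the pointwise constraint $Y\ge 0$ simultaneously, and verify that the intermediate region $\{0<Y^*<t\}$ has measure zero at the optimum---this mirrors the threshold structure appearing in~\eqref{eq:7-4} of Remark~\ref{rem:7}.
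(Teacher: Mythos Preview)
Your approach coincides with the paper's: both reduce to $Z,Y\ge 0$, plug in the infimum formula of Theorem~\ref{thm:primal1}, run a Lagrangian/KKT analysis to obtain the optimizer $Y^*=(t^*+\lambda Z^{p'-1})_+$ via complementary slackness, and then rescale to a single real parameter. The only organizational difference is that you swap the two suprema first and then fix $t$, whereas the paper substitutes the optimal $t^*=t^*(Y)$ into the Lagrangian and differentiates directly (an implicit envelope argument); your phrase ``using the scale invariance to normalize $b(Y,t)=1$'' is slightly off since the joint $(Y,t)\mapsto(\mu Y,\mu t)$ invariance does not act for fixed $t$, but this is harmless once the inner optimization is recast as stationarity of $\E YZ-\lambda^* b(Y,t)$ over $Y\ge 0$, which is precisely the paper's computation.
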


\begin{proof}
We may assume that $Z\ge0$ so that we may restrict~\eqref{eq: DualNorm-Def1}
to $Y\ge0$. Observe first that $\lambda\geq\left\Vert Z\right\Vert _{\alpha,p^{\prime}}^{*}$
is equivalent to $0\geq\E YZ-\lambda\EVaR_{\alpha}^{p}\left(\left|Y\right|\right)$
for all $Y\geq0$. We maximize this expression with respect to $Y.$
The Lagrangian of this maximization problem is 
\begin{align*}
L(Y,\lambda,\mu) & =\E YZ-\lambda\EVaR_{\alpha}^{p}\left(\left|Y\right|\right)-\E Y\mu,
\end{align*}
where $\mu$ is the Lagrange multiplier associated to the constraint
$Y\geq0$. The Lagrangian at the optimal $Y$ with optimizer~$t^{*}$
for $\EVaR_{\alpha}^{p}(Y)$ is 
\begin{align*}
L(Y,\lambda,\mu) & =\E YZ-\lambda t^{*}-\lambda\left(\frac{1}{1-\alpha}\right)^{^{\frac{1}{p}}}\left\Vert \left(Y-t^{*}\right)_{+}\right\Vert _{p}-\E Y\mu.
\end{align*}
The directional derivative of the Lagrangian in direction $H\in L^{p}$
at $Y$ is 
\[
\frac{\partial}{\partial Y}L(Y,\lambda,\mu)H=\E HZ-\lambda\left(\frac{1}{1-\alpha}\right)^{^{\frac{1}{p}}}\E\left[\left(Y-t^{*}\right)_{+}^{p}\right]^{\frac{1}{p}-1}\cdot\E H\left(Y-t^{*}\right)_{+}^{p-1}-\E H\mu.
\]
The derivative vanishes in every direction $H$ so that 
\[
Z-\mu=c\left(Y-t^{*}\right)_{+}^{p-1},
\]
where $c=\left(\frac{1}{1-\alpha}\right)^{^{\frac{1}{p}}}\E\left[\left(Y-t^{*}\right)_{+}^{p}\right]^{\frac{1}{p}-1}>0$.
By complimentary slackness for the optimal $Y$ and $\mu$, 
\[
Y>0\iff\mu=0\iff Z=c\left(Y-t^{*}\right)_{+}^{p-1}>c\left(-t^{*}\right)_{+}^{p-1},
\]
which is equivalent to $Y=\left(t^{*}+\left(\frac{Z}{c}\right)^{p^{\prime}-1}\right)_{+}$.
Denote the optimal $Y$ in~\eqref{eq: DualNorm-Def1} by $Y_{\max}$
with optimizer $t^{*}$ of $\EVaR_{\alpha}^{p}(Y_{\max})$. Then the
above consideration implies that
\begin{align*}
\sup_{Y\neq0}\frac{\E YZ}{\EVaR_{\alpha}^{p}\left(\left|Y\right|\right)} & =\frac{\E Y_{\max}Z}{\EVaR_{\alpha}^{p}\left(\left|Y_{\max}\right|\right)}\\
 & =\frac{\E\left(t^{*}+\left(\frac{Z}{c}\right)^{p^{\prime}-1}\right)_{+}Z}{t^{*}+\left(\frac{1}{1-\alpha}\right)^{\frac{1}{p}}\left\Vert \left(\left(t^{*}+\left(\frac{Z}{c}\right)^{p^{\prime}-1}\right)_{+}-t^{*}\right)_{+}\right\Vert _{p}}\\
 & =\frac{\left(\frac{1}{c}\right)^{q-1}\E\left(c^{q-1}t^{*}+Z^{p^{\prime}-1}\right)_{+}Z}{t^{*}+\left(\frac{1}{c}\right)^{q-1}\left(\frac{1}{1-\alpha}\right)^{\frac{1}{p}}\left\Vert \left(\left(c^{q-1}t^{*}+Z^{q-1}\right)_{+}-c^{q-1}t^{*}\right)_{+}\right\Vert _{p}}.
\end{align*}
We assumed that $Z\geq0$ and hence $((t+Z^{p^{\prime}-1})_{+}-t)_{+}=(Z^{p^{\prime}-1}+t)_{+}-t$.
The dual norm then simplifies to
\begin{align*}
\sup_{Y\neq0}\frac{\E YZ}{\EVaR_{\alpha}^{p}\left(\left|Y\right|\right)} & =\frac{\E\left(c^{q-1}t^{*}+Z^{p^{\prime}-1}\right)_{+}Z}{c^{q-1}t^{*}+\left(\frac{1}{1-\alpha}\right)^{\frac{1}{p}}\left\Vert \left(c^{q-1}t^{*}+Z^{q-1}\right)_{+}-c^{q-1}t^{*}\right\Vert _{p}}\\
 & =\sup_{t\in\mathbb{R}}\,\frac{\E\left(t+Z^{p^{\prime}-1}\right)_{+}Z}{t+\left(\frac{1}{1-\alpha}\right)^{\frac{1}{p}}\left\Vert \left(t+Z^{q-1}\right)_{+}-t\right\Vert _{p}},
\end{align*}
which concludes the proof.
\end{proof}
In the case $p<0$ we deduce a similar result to Proposition~\ref{prop: DualNorm1}.
We can give the following characterization, which is again a supremum
over one single parameter. 
\begin{prop}[Explicit dual norm for $p<0$]
\label{prop:Dualnorm2}For $p<0$, the dual norm of $\left\Vert \cdot\right\Vert =\EVaR_{\alpha}^{p}\left(\left|\cdot\right|\right)$
is given by 
\begin{equation}
\left\Vert Z\right\Vert _{\alpha,p^{\prime}}^{*}=\sup_{t>\esssup\left(Z^{p^{\prime}-1}\right)}\ \frac{\E\left(t-\left|Z\right|^{p^{\prime}-1}\right)\left|Z\right|}{t-\left(\frac{1}{1-\alpha}\right)^{\frac{1}{p}}\left\Vert \left|Z\right|^{p^{\prime}-1}\right\Vert _{p}}.\label{eq:DualNorm2}
\end{equation}
\end{prop}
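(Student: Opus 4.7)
The proposal is to mirror the Lagrangian/KKT argument used in Proposition~\ref{prop: DualNorm1}, but now basing the calculation on the infimum representation from Theorem~\ref{thm:primal2} valid for $p<0$. First, by positive homogeneity and the definition~\eqref{eq: DualNorm-Def1}, I may assume $Z\ge 0$ and restrict the supremum to $Y\ge 0$. For such $Y$, the infimum representation gives $\EVaR_{\alpha}^{p}(Y)=t^{*}-\left(\frac{1}{1-\alpha}\right)^{1/p}\|t^{*}-Y\|_{p}$ at the minimizer $t^{*}>\esssup Y$ (existence and the strict inequality are supplied by Proposition~\ref{prop:objective}).

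Next, I form the Lagrangian
\[
L(Y,\lambda,\mu)=\E YZ-\lambda\,\EVaR_{\alpha}^{p}(Y)-\E Y\mu,
\]
with $\mu\ge 0$ enforcing $Y\ge 0$. By the envelope theorem applied at the optimal $t^{*}$, only the partial derivative in $Y$ matters, and for $p<0$ a direct computation gives the directional derivative of $\|t^{*}-Y\|_{p}=(\E(t^{*}-Y)^{p})^{1/p}$ in direction $H$ as $-\|t^{*}-Y\|_{p}^{1-p}\,\E(t^{*}-Y)^{p-1}H$. Setting $\partial L/\partial Y=0$ yields the stationarity relation
\[
Z-\mu=c\,(t^{*}-Y)^{p-1},\qquad c:=\lambda\left(\tfrac{1}{1-\alpha}\right)^{1/p}\|t^{*}-Y\|_{p}^{1-p}>0.
\]
Complementary slackness on $\{Y>0\}$ forces $\mu=0$, and solving for $Y$ (using the identity $p'-1=1/(p-1)$) gives $Y=t^{*}-(Z/c)^{p'-1}$.

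The final step is to substitute this optimal $Y$ back into the ratio $\E YZ/\EVaR_{\alpha}^{p}(Y)$ and eliminate the constant $c$. Using $(Z/c)^{p'-1}Z=c^{1-p'}Z^{p'}$ one finds $\E YZ=t^{*}\E Z-c^{1-p'}\E Z^{p'}$ and $\EVaR_{\alpha}^{p}(Y)=t^{*}-\left(\frac{1}{1-\alpha}\right)^{1/p}c^{1-p'}\|Z^{p'-1}\|_{p}$. Introducing the reparameterization $\tau:=c^{p'-1}t^{*}$, a common factor $c^{1-p'}$ cancels in the ratio and produces
\[
\frac{\E YZ}{\EVaR_{\alpha}^{p}(Y)}=\frac{\E\bigl(\tau-Z^{p'-1}\bigr)Z}{\tau-\left(\tfrac{1}{1-\alpha}\right)^{1/p}\|Z^{p'-1}\|_{p}}.
\]
The feasibility constraint $Y\ge 0$ a.s., together with the requirement $t^{*}>\esssup Y$ from Proposition~\ref{prop:objective}, translates after rescaling into $\tau>\esssup(Z^{p'-1})$, and taking the supremum over $\tau$ in this range delivers~\eqref{eq:DualNorm2}.

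I expect the main obstacle to be the rigorous justification of Lagrangian duality in this nonconvex-looking setting: the functional $\|\cdot\|_{p}$ is only concave for $p<0$ (reverse Minkowski), so the standard convex KKT arguments need to be transcribed carefully, much as they were for Proposition~\ref{prop: DualNorm1}. A secondary technical issue is the boundary regime where $\essinf Z=0$ (so that $Z^{p'-1}$ is unbounded), in which case the supremum in~\eqref{eq:DualNorm2} should be interpreted via the convention $\esssup(Z^{p'-1})=\infty$ and handled through a truncation/approximation argument on $Y\ge 0$ (via the complementary-slackness alternative $Y=0$ wherever $(Z/c)^{p'-1}>t^{*}$), parallel to the positive-part truncation $(\,\cdot\,)_{+}$ appearing in Proposition~\ref{prop: DualNorm1}.
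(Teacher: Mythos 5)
Your proposal follows essentially the same route as the paper's proof: the Lagrangian with multiplier $\mu$ for $Y\ge 0$ built on the infimum representation of Theorem~\ref{thm:primal2}, the stationarity relation $Z-\mu=c\,(t^{*}-Y)^{p-1}$ (your $c$ is the paper's $\tfrac1c$), complementary slackness giving $Y=t^{*}-(Z/c)^{p^{\prime}-1}$, and the same rescaling $\tau=c^{p^{\prime}-1}t^{*}$ that cancels $c$ and turns the feasibility requirement into $\tau>\esssup\bigl(Z^{p^{\prime}-1}\bigr)$. The technical caveats you flag (concavity of $\lVert\cdot\rVert_{p}$ for $p<0$ and the case $\essinf Z=0$) are left equally informal in the paper, so your argument is on par with the published one.
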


\begin{proof}
We may assume that $Z\ge0$ so that we may restrict~\eqref{eq: DualNorm-Def1}
to $Y\ge0$. Observe first that that $\lambda\geq\left\Vert Z\right\Vert _{\alpha,p^{\prime}}^{*}$
is equivalent to $0\geq\E YZ-\lambda\EVaR_{\alpha}^{p}\left(\left|Y\right|\right)$
for all $Y\geq0$. We maximize this expression with respect to $Y$.
Then the Lagrange formulation of this maximization problem is 
\begin{align*}
L(Y,\lambda,\mu) & =\E YZ-\lambda\EVaR_{\alpha}^{p}\left(\left|Y\right|\right)-\E Y\mu\\
 & =\E YZ-\lambda t^{*}+\lambda\left(\frac{1}{1-\alpha}\right)^{^{\frac{1}{p}}}\left\Vert t^{*}-Y\right\Vert _{p}-\E Y\mu,
\end{align*}
where $t^{*}$ is the optimizer of $\EVaR_{\alpha}^{p}(Y)$ and $\mu$
the Lagrange multiplier associated to the constraint $Y\geq0$. $L$
is differentiable in each direction and the directional derivative
in $Y$ in the direction $H$ is 
\[
\frac{\partial}{\partial Y}L(Y,\lambda,\mu)H=\E HZ-\lambda\left(\frac{1}{1-\alpha}\right)^{^{\frac{1}{p}}}\E\left[\left(t^{*}-Y\right){}^{p}\right]^{\frac{1}{p}-1}\E H\left(t^{*}-Y\right)^{p-1}-\E H\mu.
\]
The derivative vanishes in every direction $H$ and consequently we
get 
\[
Z-\mu=\frac{1}{c}\left(t^{*}-Y\right)^{p-1}.
\]
Here, $\frac{1}{c}=\lambda\left(\frac{1}{1-\alpha}\right)^{^{\frac{1}{p}}}\E\left[\left(t^{*}-Y\right)^{p}\right]^{\frac{1}{p}-1}$.
By complimentary slackness for optimal $Y$ and $\mu$, it follows
that
\[
Y>0\iff\mu=0\iff Z=\frac{1}{c}\left(t^{*}-Y\right){}^{p-1}.
\]
This shows that it is enough to consider $Y$ of the form $Y=t-\left(cZ\right){}^{p^{\prime}-1}$
in the definition of the dual norm. Denote now the optimal $Y$ in~\eqref{eq: DualNorm-Def1}
by $Y_{\max}$. Then the above implies that for the optimizer $t^{*}$
of $\EVaR_{\alpha}^{p}\left(Y_{\max}\right)$ we have
\[
Y_{\max}=t^{*}-\left(cZ\right){}^{p^{\prime}-1}.
\]
And therefore the expression~\eqref{eq: DualNorm-Def1} reduces to
\begin{align*}
\sup_{Y\neq0}\frac{\E YZ}{\EVaR_{\alpha}^{p}\left(\left|Y\right|\right)} & =\frac{\E Y_{\max}Z}{\EVaR_{\alpha}^{p}\left(\left|Y_{\max}\right|\right)}=\frac{\E\left(t^{*}-\left(cZ\right){}^{p^{\prime}-1}\right)Z}{t^{*}-\left(\frac{1}{1-\alpha}\right)^{\frac{1}{p}}\left\Vert \left(cZ\right){}^{p^{\prime}-1}\right\Vert _{p}}\\
 & =\frac{\E\left[\left(\frac{t^{*}}{c^{p^{\prime}-1}}-Z^{p^{\prime}-1}\right)Z\right]}{\frac{t^{*}}{c^{p^{\prime}-1}}-\left(\frac{1}{1-\alpha}\right)^{\frac{1}{p}}\left\Vert Z^{p^{\prime}-1}\right\Vert _{p}}.
\end{align*}
Notice that $Y>0$ implies that $\frac{t^{*}}{c^{p^{\prime}-1}}>\esssup\left(Z^{p^{\prime}-1}\right)$
and thus 
\[
\frac{\E\left[\left(\frac{t^{*}}{c^{p^{\prime}-1}}-Z^{p^{\prime}-1}\right)Z\right]}{\frac{t^{*}}{c^{p^{\prime}-1}}-\left(\frac{1}{1-\alpha}\right)^{\frac{1}{p}}\left\Vert Z^{p^{\prime}-1}\right\Vert _{p}}=\sup_{t>Z^{p^{\prime}-1}}\,\frac{\E\left[\left(t-Z^{p^{\prime}-1}\right)Z\right]}{t-\left(\frac{1}{1-\alpha}\right)^{\frac{1}{p}}\left\Vert Z^{p^{\prime}-1}\right\Vert _{p}},
\]
which shows the assertion.
\end{proof}

\subsection{Hahn\textendash Banach functionals}

We now describe the Hahn-Banach functionals corresponding to $Y\in L^{p}$
and $Z\in L^{p^{\prime}}$explicitly. This means we identify the random
variable $Z\in L^{p^{\prime}}$which maximizes 
\[
\EVaR(|Y|)=\sup_{Z\neq0}\frac{\E YZ}{\left\Vert Z\right\Vert _{\alpha,p^{\prime}}^{*}}
\]
and the random variable $Y\in L^{p}$ which maximizes
\[
\left\Vert Z\right\Vert _{\alpha,p^{\prime}}^{*}=\sup_{Y\neq0}\frac{\E YZ}{\EVaR_{\alpha}^{p}(\left|Y\right|)}.
\]
\begin{prop}
\label{prop: HB1}For $p>1$ let $Y\in L^{p}$ and suppose that there
is an optimizer $t^{*}\in\mathbb{R}$ of 
\[
\inf_{t\in\mathbb{R}}\left\{ t+\left(\frac{1}{1-\alpha}\right)^{\nicefrac{1}{p}}\cdot\left\Vert (\left|Y\right|-t)_{+}\right\Vert _{p}\right\} .
\]
Then $Z^{\prime}:=\sign(Y)\cdot\left(\left|Y\right|-t^{*}\right){}_{+}^{p-1}$
maximizes the expression
\[
\EVaR_{\alpha}^{p}\left(\left|Y\right|\right)=\sup_{Z\neq0}\frac{\E YZ}{\left\Vert Z\right\Vert _{\alpha,p^{\prime}}^{*}}.
\]
\end{prop}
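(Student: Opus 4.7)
The plan is to verify directly that $Z'$ attains the supremum by computing $\E YZ'$ and $\|Z'\|_{\alpha,p'}^*$ separately and showing their ratio equals $\EVaR_\alpha^p(|Y|)$. I will work throughout with $c := \E (|Y|-t^*)_+^{p-1}$ and $Z^* := (|Y|-t^*)_+^{p-1}/c$, and assume $t^* < \esssup|Y|$ (the degenerate case $t^* = \esssup|Y|$ would force $Z' \equiv 0$, and the Hahn--Banach functional is then given by the alternative density formula in Remark~\ref{rem:7}).

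First, using the pointwise identity $Y\cdot\sign(Y) = |Y|$, I would write
\[
\E YZ' \;=\; \E\, Y\,\sign(Y)\,(|Y|-t^*)_+^{p-1} \;=\; \E |Y|\,(|Y|-t^*)_+^{p-1} \;=\; c\,\E |Y| Z^*.
\]
Because $Z^*$ is precisely the optimal density in the primal representation~\eqref{eq: EVaRDef1} applied to $|Y|$ (by Remark~\ref{rem:7} and the fact that $t^*$ is the assumed dual optimizer), it follows that $\E |Y| Z^* = \EVaR_\alpha^p(|Y|)$ and hence $\E YZ' = c\cdot\EVaR_\alpha^p(|Y|)$.

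Next, I would bound the dual norm from above: since $|Z'|/c = Z^*$ is a feasible density in~\eqref{eq: EVaRDef1}, for any test $\tilde Y \in L^p$,
\[
\E \tilde Y Z' \;\le\; \E|\tilde Y|\,|Z'| \;=\; c\,\E|\tilde Y|\,Z^* \;\le\; c\,\EVaR_\alpha^p(|\tilde Y|),
\]
so the defining supremum for $\|Z'\|_{\alpha,p'}^*$ over $\tilde Y$ with $\EVaR_\alpha^p(|\tilde Y|)\le 1$ gives $\|Z'\|_{\alpha,p'}^* \le c$. The reverse inequality comes from testing the supremum with $\tilde Y = Y$ itself, which yields $\|Z'\|_{\alpha,p'}^* \ge \E YZ'\big/\EVaR_\alpha^p(|Y|) = c$. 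Combining both, $\|Z'\|_{\alpha,p'}^* = c$, and hence
\[
\frac{\E YZ'}{\|Z'\|_{\alpha,p'}^*} \;=\; \EVaR_\alpha^p(|Y|),
\]
as required.

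The main obstacle I expect is conceptual rather than computational: one has to notice that the twist by $\sign(Y)$ is precisely what converts the optimal primal density $Z^*$ of $|Y|$ into the correct dual variable for $Y$, because $\sign(Y)$ is absorbed through the equality $Y\sign(Y)=|Y|$ in the numerator computation, while on the upper-bound side it is absorbed through the inequality $\tilde Y Z' \le |\tilde Y||Z'|$, so the very same normalization constant $c$ appears in both $\E YZ'$ and $\|Z'\|_{\alpha,p'}^*$. Once this symmetry is identified, the argument reduces to one application of Remark~\ref{rem:7} and one application of the primal definition~\eqref{eq: EVaRDef1}.
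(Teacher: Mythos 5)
Your proposal is correct and in essence coincides with the paper's proof: both identify $Z^{\prime}/c$ with the optimal primal density via Eq.~\eqref{eq:7-4} of Remark~\ref{rem:7} (so that $\E YZ^{\prime}=c\cdot\EVaR_{\alpha}^{p}\left(\left|Y\right|\right)$), and both obtain $\left\Vert Z^{\prime}\right\Vert _{\alpha,p^{\prime}}^{*}=c$ by combining feasibility of that density in~\eqref{eq: EVaRDef1} (upper bound) with testing the dual-norm supremum at $Y$ itself (attainment). Your write-up is merely a bit more explicit than the paper's about the two-sided bound, the role of $\sign(Y)$, and the degenerate case $t^{*}=\esssup\left|Y\right|$, which the paper sidesteps by assuming $Y\ge0$.
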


\begin{proof}
Without loss of generality we assume $Y\geq0$. Then, by the definition
of the dual norm, the definition of $Z^{\prime}$ and Eq.~\eqref{eq:7-4}
in Remark~\ref{rem:7} we have $\E Z^{\prime}\cdot\EVaR_{\alpha}^{p}\left(\left|Y\right|\right)=\E YZ^{\prime}$.
It is therefore enough to verify that $\E Z^{\prime}=\left\Vert Z^{\prime}\right\Vert _{\alpha,p^{\prime}}^{*}$.
Since $\frac{Z^{^{\prime}}}{\E Z^{\prime}}$ is a density we have
\[
\left\Vert Z^{\prime}\right\Vert _{\alpha,p^{\prime}}^{*}=\sup_{Y\neq0}\frac{\E YZ^{\prime}}{\EVaR_{\alpha}^{p}(Y)}=\frac{\E Y^{*}Z^{\prime}}{\E Y^{*}Z^{*}}=\E Z^{\prime},
\]
where $Y^{*}$ is the maximizer of the above supremum and $Z^{*}$
the optimal density for $Y^{*}$.
\end{proof}
We now address the converse question, which is: given $Z$, what is
the random variable $Y$ to achieve equality in~\eqref{eq: DualNorm-Def1}?
\begin{prop}
\label{prop: HB2}For $p^{\prime}>1$ let $Z\in L^{p^{\prime}}$and
suppose that there is an optimal $t^{*}\in\mathbb{R}$ in 
\[
\sup_{t\in\mathbb{R}}\frac{\E\left(t+\left|Z\right|^{p^{\prime}-1}\right)_{+}\left|Z\right|}{t+\left(\frac{1}{1-\alpha}\right)^{\frac{1}{p}}\left\Vert \left(t+\left|Z\right|^{p^{\prime}-1}\right)_{+}-t\right\Vert _{p}}.
\]
Then $Y^{\prime}:=\sign(Z)\cdot\left(t^{*}+\left|Z\right|^{p^{\prime}-1}\right)_{+}$
satisfies the equality
\[
\E Y^{\prime}Z=\EVaR_{\alpha}^{p}\left(Y^{\prime}\right)\cdot\left\Vert Z\right\Vert _{\alpha,p^{\prime}}^{*}.
\]
\end{prop}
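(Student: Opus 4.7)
The statement is the natural companion to Proposition~\ref{prop: HB1}, asserting attainment on the $Z$-side of the duality rather than the $Y$-side. The plan is to exploit the fact that the argument yielding the explicit formula~\eqref{eq:dualNorm-p>1} in the proof of Proposition~\ref{prop: DualNorm1} already forced the optimal primal variables to be of the shape $(t+|Z|^{p^{\prime}-1})_{+}$ for some $t\in\mathbb{R}$; it then remains to substitute the $t^{*}$ prescribed in the hypothesis and read off equality.

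First, I would reduce to the case $Z\ge0$. Replacing $Z$ by $|Z|$ and simultaneously absorbing the sign into $Y$ leaves both $\E YZ$ and $\EVaR_{\alpha}^{p}(|Y|)$ unchanged, and $\|Z\|_{\alpha,p^{\prime}}^{*}=\|\,|Z|\,\|_{\alpha,p^{\prime}}^{*}$, so it suffices to analyze the nonnegative candidate $Y^{\prime}=(t^{*}+Z^{p^{\prime}-1})_{+}$. Next, I would record the elementary pointwise identity $(Y^{\prime}-t^{*})_{+}=(t^{*}+Z^{p^{\prime}-1})_{+}-t^{*}$, which follows by a short case split on the sign of $t^{*}$ (using $Z^{p^{\prime}-1}\ge0$ throughout). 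Plugging $t=t^{*}$ into the infimum representation of Theorem~\ref{thm:primal1} then gives
\[
\EVaR_{\alpha}^{p}(Y^{\prime})\le t^{*}+\left(\frac{1}{1-\alpha}\right)^{1/p}\bigl\|(t^{*}+Z^{p^{\prime}-1})_{+}-t^{*}\bigr\|_{p},
\]
and the right-hand side is exactly the denominator appearing in~\eqref{eq:dualNorm-p>1}. Because $t^{*}$ is, by assumption, a maximizer of that ratio, the chain yields
\[
\|Z\|_{\alpha,p^{\prime}}^{*}=\frac{\E Y^{\prime}Z}{t^{*}+(1-\alpha)^{-1/p}\bigl\|(t^{*}+Z^{p^{\prime}-1})_{+}-t^{*}\bigr\|_{p}}\le\frac{\E Y^{\prime}Z}{\EVaR_{\alpha}^{p}(Y^{\prime})},
\]
while the reverse inequality is immediate from the definition~\eqref{eq: DualNorm-Def1} of the dual norm. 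Rearranging produces the asserted equality.

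The only delicate point is that the plan uses \emph{merely} the upper bound on $\EVaR_{\alpha}^{p}(Y^{\prime})$ that comes from evaluating the infimum representation at the single point $t=t^{*}$, so no attainment statement for the infimum defining $\EVaR_{\alpha}^{p}(Y^{\prime})$ is required beyond the existence of $t^{*}$ postulated in the hypothesis; this is what makes the argument self-contained. The sign-reduction step also deserves a moment of care to match the statement as written, with $\EVaR_{\alpha}^{p}(Y^{\prime})$ rather than $\EVaR_{\alpha}^{p}(|Y^{\prime}|)$, but in the reduced case $Z\ge0$ we have $Y^{\prime}\ge0$, so the two expressions coincide and the issue is immaterial.
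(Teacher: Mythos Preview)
Your proof is correct and follows essentially the same route as the paper: reduce to $Z\ge0$, use the infimum representation at $t=t^{*}$ to bound $\EVaR_{\alpha}^{p}(Y^{\prime})$ from above by the denominator of~\eqref{eq:dualNorm-p>1}, and combine this with the definition of the dual norm to sandwich the two quantities into equality. The paper packages the same logic as a single closed chain of inequalities $\E Y^{\prime}Z\le\|Z\|_{\alpha,p^{\prime}}^{*}\EVaR_{\alpha}^{p}(Y^{\prime})\le\|Z\|_{\alpha,p^{\prime}}^{*}\cdot(\text{denominator})=\E Y^{\prime}Z$, but the content is identical.
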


\begin{proof}
Without loss of generality, we may assume that $Z\geq0$. By assumption
\[
\left\Vert Z\right\Vert _{\alpha,p^{\prime}}^{*}=\frac{\E\left(t^{*}+\left|Z\right|^{p^{\prime}-1}\right)_{+}\left|Z\right|}{t^{*}+\left(\frac{1}{1-\alpha}\right)^{\frac{1}{p}}\left\Vert \left(\left(t^{*}+\left|Z\right|^{p^{\prime}-1}\right)-t^{*}\right)\right\Vert _{p}}.
\]
Then 
\[
\E Y^{\prime}Z\leq\left\Vert Z\right\Vert _{\alpha,p^{\prime}}^{*}\EVaR_{\alpha}^{p}(Y^{\prime})\leq\left\Vert Z\right\Vert _{\alpha,p^{\prime}}^{*}\left(t^{*}+\left(\frac{1}{1-\alpha}\right)^{\frac{1}{p}}\left\Vert \left(Y^{\prime}-t^{*}\right)_{+}\right\Vert _{p}\right)=\E Y^{\prime}Z,
\]
hence equality holds and the assertion follows.
\end{proof}

We now derive the corresponding Hahn-Banach functionals for the dual
norm $\left\Vert Z\right\Vert _{\alpha,p}^{*}$ for $p<0$. The proofs
are analogous to the case $p>1$ in Proposition~\ref{prop: HB1}
(Proposition~\ref{prop: HB2}, resp.) and therefore are omitted.
\begin{prop}
\label{prop: HB3}Let $Y\in L^{\infty}$ and $p<0$. Further suppose
that there is an optimal $t^{*}\in\mathbb{R}$ in 
\[
\inf_{t>\esssup Y}\left\{ t-\left(\frac{1}{1-\alpha}\right)^{\nicefrac{1}{p}}\cdot\left\Vert t-\left|Y\right|\right\Vert _{p}\right\} .
\]
Then $Z^{\prime}:=\sign(Y)\cdot\left(t^{*}-\left|Y\right|\right){}^{p-1}$
maximizes the expression
\[
\EVaR_{\alpha}^{p}\left(\left|Y\right|\right)=\sup_{Z\neq0}\frac{\E YZ}{\left\Vert Z\right\Vert _{\alpha,p^{\prime}}^{*}}.
\]
\end{prop}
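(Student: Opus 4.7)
The plan is to adapt the proof of Proposition~\ref{prop: HB1}, replacing the optimal-density formula from Remark~\ref{rem:7} with the one extracted from the proof of Theorem~\ref{thm:primal2}, which is the correct source for the regime $p<0$.

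First I would reduce to the case $Y\ge 0$: the factor $\sign(Y)$ in the definition of $Z'$ together with the presence of $|Y|$ in $\EVaR_\alpha^p(|Y|)$ makes both sides of the claimed identity invariant under $Y\mapsto|Y|$. Note that the infimum in the hypothesis runs over $t>\esssup Y$, so an optimizer $t^{*}\in\mathbb{R}$ necessarily satisfies $t^{*}>\esssup Y$; in particular $Z'=(t^{*}-Y)^{p-1}$ is a.s.\ finite despite $p-1<0$.

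Next I would invoke the proof of Theorem~\ref{thm:primal2}, which shows that for $t^{*}>\esssup Y$ the density $Z^{*}:=(t^{*}-Y)^{p-1}/c$, with normalizing constant $c:=\E(t^{*}-Y)^{p-1}=\E Z'$, attains the supremum in~\eqref{eq: EVaRDef1}, i.e.\ $\E YZ^{*}=\EVaR_\alpha^p(Y)$. Since $Z'=c\,Z^{*}$, multiplying through yields the key identity
\[
\E YZ'\;=\;c\cdot\EVaR_\alpha^p(Y)\;=\;\E Z'\cdot\EVaR_\alpha^p(|Y|).
\]

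To finish it remains to show $\E Z'=\Vert Z'\Vert_{\alpha,p'}^{*}$. The inequality $\Vert Z'\Vert_{\alpha,p'}^{*}\le\E Z'$ follows from the feasibility of $Z'/\E Z'=Z^{*}$ in~\eqref{eq: EVaRDef1}: for every $\tilde Y$ one has $\E\tilde YZ^{*}\le\EVaR_\alpha^p(|\tilde Y|)$, hence $\E\tilde YZ'\le\E Z'\cdot\EVaR_\alpha^p(|\tilde Y|)$, and taking the supremum over $\tilde Y$ gives the bound. The reverse inequality follows by plugging $\tilde Y:=Y$ into the dual-norm supremum, by virtue of the displayed identity above. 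Dividing that identity by $\Vert Z'\Vert_{\alpha,p'}^{*}=\E Z'$ yields
\[
\EVaR_\alpha^p(|Y|)\;=\;\frac{\E YZ'}{\Vert Z'\Vert_{\alpha,p'}^{*}},
\]
establishing that $Z'$ attains the supremum. I do not anticipate a genuine obstacle: all the analytic work (existence and structure of the minimizer $t^{*}$ and optimality of $Z^{*}$) has already been carried out in Proposition~\ref{prop:objective} and Theorem~\ref{thm:primal2}, so the remainder is purely duality bookkeeping. The only mildly delicate point is the limit case $t^{*}=\esssup Y$, where $(t^{*}-|Y|)^{p-1}$ would fail to be defined on $\{|Y|=\esssup Y\}$; this case is excluded by the hypothesis that $t^{*}$ lies in the infimum domain $t>\esssup Y$.
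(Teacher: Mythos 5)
Your proposal is correct and follows essentially the route the paper intends: the paper omits this proof as ``analogous to Proposition~\ref{prop: HB1}'', and you carry out exactly that analogy, sourcing the optimal density $Z^{*}=(t^{*}-Y)^{p-1}/\E(t^{*}-Y)^{p-1}$ from the proof of Theorem~\ref{thm:primal2} in place of Eq.~\eqref{eq:7-4}, obtaining $\E YZ^{\prime}=\E Z^{\prime}\cdot\EVaR_{\alpha}^{p}(|Y|)$, and then verifying $\E Z^{\prime}=\left\Vert Z^{\prime}\right\Vert _{\alpha,p^{\prime}}^{*}$. Your two-inequality check of that last identity (feasibility of $Z^{*}$ for the upper bound, plugging $\tilde Y=Y$ for the lower bound) is, if anything, a slightly cleaner bookkeeping than the corresponding step in the paper's proof of Proposition~\ref{prop: HB1}.
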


\begin{prop}
\label{prop: HB4}Let $Z\neq0$ and $p<0$ and suppose that there
is an optimal $t^{*}<\infty$ in 
\[
\sup_{t>\esssup Z^{p^{\prime}-1}}\,\frac{\E\left(t-\left|Z\right|^{p^{\prime}-1}\right)\left|Z\right|}{t-\left(\frac{1}{1-\alpha}\right)^{\frac{1}{p}}\left\Vert \left|Z\right|^{p^{\prime}-1}\right\Vert _{p}}.
\]
Then $Y^{\prime}:=\sign(Z)\cdot\left(t^{*}-\left|Z\right|^{p^{\prime}-1}\right)$
satisfies the equality
\[
\left\Vert Z\right\Vert _{\alpha,p^{\prime}}^{*}\cdot\EVaR_{\alpha}^{p}\left(Y^{\prime}\right)=\E Y^{\prime}Z.
\]
\end{prop}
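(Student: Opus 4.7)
The plan is to mirror the template of Proposition~\ref{prop: HB2}, now working with the infimum representation of Theorem~\ref{thm:primal2} valid for $p<0$. The strategy is to sandwich $\|Z\|_{\alpha,p'}^{*}\cdot\EVaR_{\alpha}^{p}(Y')$ between $\E Y'Z$ from both sides, forcing the gap to close at the specific choice $t=t^{*}$.

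First I would reduce to $Z\ge 0$, which is legitimate because both sides of the claimed equality are invariant under $Z\mapsto\sign(Z)|Z|$ (with the matching sign absorbed into $Y'$). The hypothesis $t^{*}>\esssup Z^{p'-1}$ combined with $p'-1<0$ forces $\essinf Z>0$; in particular $Z>0$ almost surely, and $Y'=t^{*}-Z^{p'-1}$ is strictly positive almost surely, so $|Y'|=Y'$.

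The weak-duality half of the sandwich comes for free from the definition of the dual norm in~\eqref{eq: DualNorm-Def1} applied to $Y'$, namely
\[
\E Y'Z\le\|Z\|_{\alpha,p'}^{*}\cdot\EVaR_{\alpha}^{p}(Y').
\]
For the reverse direction I would use two facts simultaneously. The assumed optimality of $t^{*}$ in Proposition~\ref{prop:Dualnorm2} rewrites as
\[
\|Z\|_{\alpha,p'}^{*}\cdot\Bigl(t^{*}-(1-\alpha)^{-1/p}\,\|Z^{p'-1}\|_{p}\Bigr)=\E Y'Z,
\]
while plugging $t=t^{*}$ into the infimum representation of Theorem~\ref{thm:primal2} for $Y'$, together with the identity $t^{*}-Y'=Z^{p'-1}$, gives
\[
\EVaR_{\alpha}^{p}(Y')\le t^{*}-(1-\alpha)^{-1/p}\,\|Z^{p'-1}\|_{p}.
\]
Multiplying this inequality by $\|Z\|_{\alpha,p'}^{*}>0$ and substituting from the previous display yields $\|Z\|_{\alpha,p'}^{*}\cdot\EVaR_{\alpha}^{p}(Y')\le\E Y'Z$, which combined with the weak-duality bound forces equality throughout.

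The main technical obstacle is verifying admissibility of $t=t^{*}$ in the infimum representation, which strictly requires $t^{*}>\esssup Y'$, equivalent to $\essinf Z^{p'-1}>0$, i.e. $Z\in L^{\infty}$. When $Z$ is not essentially bounded one has $\esssup Y'=t^{*}$ and must instead invoke the boundary branch of Proposition~\ref{prop:objective}, from which $\EVaR_{\alpha}^{p}(Y')=t^{*}$ via a limiting argument $t\searrow\esssup Y'$; the same chain of equalities then closes. This case distinction is the only genuine subtlety and is presumably why the authors elect to omit the explicit write-up in parallel with Proposition~\ref{prop: HB2}.
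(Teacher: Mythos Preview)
Your proposal is correct and follows exactly the template the paper intends: the authors omit the proof of Proposition~\ref{prop: HB4} with the remark that it is analogous to Proposition~\ref{prop: HB2}, and your sandwich argument
\[
\E Y'Z\;\le\;\|Z\|_{\alpha,p'}^{*}\cdot\EVaR_{\alpha}^{p}(Y')\;\le\;\|Z\|_{\alpha,p'}^{*}\cdot\Bigl(t^{*}-(1-\alpha)^{-1/p}\|t^{*}-Y'\|_{p}\Bigr)\;=\;\E Y'Z
\]
is precisely that analogue, with Theorem~\ref{thm:primal2} replacing Theorem~\ref{thm:primal1}. Your additional discussion of the boundary case $\esssup Y'=t^{*}$ (unbounded $Z$) via Proposition~\ref{prop:objective} is a genuine subtlety the paper does not mention, and handling it is a welcome improvement over the bare ``analogous'' claim.
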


With the maximizer of the Hahn-Banach functionals at hand, we can
give an alternative supremum representation of the Entropic Value-at-Risk
based on Rényi entropy.
\begin{cor}[Dual representation of $\EVaR$]
The Entropic Value-at-Risk has the alternative dual representation
\[
\EVaR_{\alpha}^{p}(Y)=\sup\left\{ \E YZ,\ Z\geq0,\ \E Z=1,\ \left\Vert Z\right\Vert _{\alpha,p^{\prime}}^{*}\leq1\right\} ,
\]
where either $p>1$ or $p<0$. 
\end{cor}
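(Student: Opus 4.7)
The plan is to establish the equality of the two feasible sets of admissible densities
\[
\mathcal{F}_{1}:=\left\{ Z\geq0\colon\E Z=1,\,H_{p^{\prime}}(Z)\leq\log\tfrac{1}{1-\alpha}\right\} ,\quad\mathcal{F}_{2}:=\left\{ Z\geq0\colon\E Z=1,\,\left\Vert Z\right\Vert _{\alpha,p^{\prime}}^{*}\leq1\right\} .
\]
Once $\mathcal{F}_{1}=\mathcal{F}_{2}$ is shown, the corollary follows at once since Definition~\ref{def:Entropic} identifies $\EVaR_{\alpha}^{p}(Y)$ with $\sup_{Z\in\mathcal{F}_{1}}\E YZ$.

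The inclusion $\mathcal{F}_{1}\subseteq\mathcal{F}_{2}$ is immediate. Any $Z\in\mathcal{F}_{1}$ is feasible in~\eqref{eq: EVaRDef1}, so for every admissible $Y^{\prime}$ we have $\E Y^{\prime}Z\leq\EVaR_{\alpha}^{p}(Y^{\prime})\leq\EVaR_{\alpha}^{p}(|Y^{\prime}|)$ by definition and monotonicity, and the definition~\eqref{eq: DualNorm-Def1} of the dual norm yields $\left\Vert Z\right\Vert _{\alpha,p^{\prime}}^{*}\leq1$.

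For the reverse inclusion $\mathcal{F}_{2}\subseteq\mathcal{F}_{1}$, I would split into the two cases of the corollary. For $p>1$, I would test against $Y:=Z^{p^{\prime}-1}\geq0\in L^{p}$. Using the identity $(p^{\prime}-1)p=p^{\prime}$, one computes $\E YZ=\left\Vert Z\right\Vert _{p^{\prime}}^{p^{\prime}}$ and $\left\Vert Y\right\Vert _{p}=\left\Vert Z\right\Vert _{p^{\prime}}^{p^{\prime}/p}$. The assumption $\left\Vert Z\right\Vert _{\alpha,p^{\prime}}^{*}\leq1$ combined with~\eqref{eq:7-1} then gives
\[
\left\Vert Z\right\Vert _{p^{\prime}}^{p^{\prime}}=\E YZ\leq\EVaR_{\alpha}^{p}(Y)\leq(1-\alpha)^{-1/p}\left\Vert Z\right\Vert _{p^{\prime}}^{p^{\prime}/p},
\]
which simplifies to $\left\Vert Z\right\Vert _{p^{\prime}}\leq(1-\alpha)^{-1/p}$, equivalent to $H_{p^{\prime}}(Z)\leq\log\tfrac{1}{1-\alpha}$. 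For $p<0$, the function $Z^{p^{\prime}-1}$ need not be bounded and may thus fail to lie in the domain $L^{\infty}$ of $\EVaR_{\alpha}^{p}$; I would therefore invoke the explicit dual-norm representation from Proposition~\ref{prop:Dualnorm2}. The hypothesis $\left\Vert Z\right\Vert _{\alpha,p^{\prime}}^{*}\leq1$ there reduces (via $\left\Vert Z^{p^{\prime}-1}\right\Vert _{p}=(\E Z^{p^{\prime}})^{1/p}$) to $\E Z^{p^{\prime}}\geq(1-\alpha)^{-1/p}(\E Z^{p^{\prime}})^{1/p}$, which rearranges to $\E Z^{p^{\prime}}\geq(1-\alpha)^{1-p^{\prime}}$, i.e., $H_{p^{\prime}}(Z)\leq\log\tfrac{1}{1-\alpha}$.

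The hard part is the $p<0$ case, where the clean test-function argument from $p>1$ breaks down because $Z^{p^{\prime}-1}$ may be unbounded for a general density $Z$. Proposition~\ref{prop:Dualnorm2} sidesteps this difficulty by providing an explicit expression for $\left\Vert Z\right\Vert _{\alpha,p^{\prime}}^{*}$ that constrains $\E Z^{p^{\prime}}$ directly.
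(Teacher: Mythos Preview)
Your argument is correct and in fact slightly cleaner than the paper's. Both proofs establish the inclusion $\mathcal{F}_{2}\subseteq\mathcal{F}_{1}$ the same way: for $p>1$ by testing against $Y=Z^{p^{\prime}-1}$ (which is exactly what the paper obtains by setting $t=0$ in Proposition~\ref{prop: DualNorm1}), and for $p<0$ by invoking the explicit dual-norm formula of Proposition~\ref{prop:Dualnorm2}. The difference lies in the other direction. You prove the full set equality $\mathcal{F}_{1}\subseteq\mathcal{F}_{2}$ in one line from the very definition of the dual norm, whereas the paper does not show this inclusion at all: it instead verifies only that the \emph{maximizing} density $Z^{*}$ from~\eqref{eq:7-4} (resp.\ from Theorem~\ref{thm:primal2}) satisfies $\left\Vert Z^{*}\right\Vert _{\alpha,p^{\prime}}^{*}\leq1$, appealing to the Hahn\textendash Banach Propositions~\ref{prop: HB2} and~\ref{prop: HB4}. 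Your route avoids that machinery entirely and yields the stronger conclusion $\mathcal{F}_{1}=\mathcal{F}_{2}$; the paper's route, on the other hand, illustrates how the Hahn\textendash Banach functionals interact with the dual norm, which is thematically relevant to Section~\ref{sec:Dual-Norms}.
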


\begin{proof}
Suppose that $p>1$ first. Then, by Proposition~\ref{prop: DualNorm1},
the inequality $\left\Vert Z\right\Vert _{\alpha,p^{\prime}}^{*}\leq1$
is equivalent to 
\[
\frac{\E\left(t+\left|Z\right|^{p^{\prime}-1}\right)_{+}\left|Z\right|}{t+\left(\frac{1}{1-\alpha}\right)^{\frac{1}{p}}\left\Vert \left(t+\left|Z\right|^{p^{\prime}-1}\right)_{+}-t\right\Vert _{p}}\leq1
\]
for all $t\in\mathbb{R}$. Setting $t=0$ it follows that 
\[
\E Z^{p^{\prime}}\leq\left(\frac{1}{1-\alpha}\right)^{\frac{1}{p}}\left(\E Z^{p^{\prime}}\right)^{\frac{1}{p}},
\]
which is equivalent to $\left\Vert Z\right\Vert _{p^{\prime}}\leq\left(\frac{1}{1-\alpha}\right)^{\frac{1}{p}}$
and so $\sup\left\{ \E YZ,\ Z\geq0,\ \E Z=1,\ \left\Vert Z\right\Vert _{\alpha,p^{\prime}}^{*}\leq1\right\} \leq\EVaR_{\alpha}^{p}(Y)$. 

For $p<0$ the constraint $\left\Vert Z\right\Vert _{\alpha,p^{\prime}}^{*}\leq1$
is equivalent to 
\[
\frac{\E\left(t-\left|Z\right|^{p^{\prime}-1}\right)\left|Z\right|}{t-\left(\frac{1}{1-\alpha}\right)^{\frac{1}{p}}\left\Vert \left|Z\right|^{p^{\prime}-1}\right\Vert _{p}}<1,
\]
which under the assumptions $Z\geq0$ and $\E Z=1$ can be rewritten
as $\E Z^{p^{\prime}}\geq\left(\frac{1}{1-\alpha}\right)^{\frac{1}{p}}\left\Vert Z^{p^{\prime}-1}\right\Vert _{p}$or
$\left\Vert Z\right\Vert _{p^{\prime}}\geq\left(\frac{1}{1-\alpha}\right)^{\frac{p^{\prime}-1}{p^{\prime}}}$. 

It remains to be shown that the maximizing $Z$ in~\eqref{eq: EVaRDef1}
satisfies the constraint $\left\Vert Z\right\Vert _{\alpha,p^{\prime}}^{*}\leq1$.
In fact, by Theorem~\ref{thm:primal1}, we know that for $p>1$ the
optimal $Z$ in $\EVaR_{\alpha}^{p}(Y)$ is given by $Z=\frac{(Y-t^{*})_{+}^{p-1}}{\E(Y-t^{*})_{+}^{p-1}}$,
where $t^{*}$ is the optimizer of~\eqref{eq: EVaRinf1}. By Proposition~\ref{prop: HB2},
the random variable $Y^{\prime}:=\sign(Z)\cdot\left(Z^{p^{\prime}-1}-t^{*}\right)$
satisfies 
\[
\E Y^{\prime}Z=\EVaR_{\alpha}^{p}\left(Y^{\prime}\right)\ \left\Vert Z\right\Vert _{\alpha,p^{\prime}}^{*}
\]
and since $Z$ is feasible for~\eqref{eq: EVaRDef1} it follows that$\left\Vert Z\right\Vert _{\alpha,p^{\prime}}^{*}=\frac{\E Y^{\prime}Z}{\EVaR_{\alpha}^{p}\left(Y^{\prime}\right)}\leq1$
by definition of $\EVaR_{\alpha}^{p}$. The same reasoning applies
to the case $p<0$. Here, the optimal $Z$ is given by $Z=\frac{(t^{*}-Y)^{p-1}}{\E(t^{*}-Y)^{p-1}}$
according to Theorem~\ref{thm:primal2}. We apply Proposition~\ref{prop: HB4}
to conclude the assertion.
\end{proof}

\subsection{Kusuoka representation}

The $\EVaR_{\alpha}^{p}$  is a version independent coherent risk
measure for which consequently a Kusuoka representation can be obtained
(cf.\ \citet{Kusuoka-R}). We derive the Kusuoka representation from
its dual representation. 
\begin{prop}[Kusuoka representation]
The Kusuoka representation of the Entropic Value-at-Risk for $p>1$
or $p<0,\,\alpha\in\left[0,1\right)$ and $Y\in L^{p}$ ($L^{\infty})$,
respectively is 
\begin{equation}
\EVaR_{\alpha}^{p}(Y)=\sup_{\mu}\int_{0}^{1}\AVaR_{x}(Y)\ \mu\left(\mathrm{d}x\right),\label{eq:Kusuoka}
\end{equation}
where the supremum is among all probability measures $\mu$ on $[0,1)$
for which the function 
\[
\sigma_{\mu}\left(u\right)=\int_{0}^{u}\frac{1}{1-v}\mu(\mathrm{d}v)
\]
satisfies
\[
\int_{0}^{1}\sigma_{\mu}\left(u\right)^{p^{\prime}}\mathrm{d}u\leq\left(\frac{1}{1-\alpha}\right)^{p^{\prime}-1}.
\]
The supremum in~\eqref{eq:Kusuoka} is attained for the measure $\mu_{\sigma*}$
associated with the distortion function 
\[
\sigma^{*}\left(u\right)\coloneqq F_{Z^{*}}^{-1}\left(u\right)=\VaR_{u}\left(Z^{*}\right),
\]
where $Z^{*}$ is the optimal random variable in~\eqref{eq: EVaRDef1}
and $F_{Z^{*}}^{-1}(u)=\VaR_{u}(Z^{*})$ its generalized inverse. 
\end{prop}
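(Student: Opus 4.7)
The plan is to derive the Kusuoka representation directly from the dual characterization~\eqref{eq: EVaRDef1}. Rewriting the entropy constraint as $\left\Vert Z\right\Vert_{p^{\prime}}\le\left(\frac{1}{1-\alpha}\right)^{1/p}$, equivalently $\E Z^{p^{\prime}}\le\left(\frac{1}{1-\alpha}\right)^{p^{\prime}-1}$, the task is to maximize $\E YZ$ over densities $Z$ subject to this moment bound. Since $\EVaR_{\alpha}^{p}$ is law invariant (both the objective and the feasible set depend on $Z$ only through its distribution), the Hardy--Littlewood--P\'olya rearrangement inequality $\E YZ\le\int_{0}^{1}F_{Y}^{-1}(u)\,F_{Z}^{-1}(u)\,\mathrm{d}u$, with equality whenever $Z$ is comonotone with $Y$, reduces the problem to a maximization over nondecreasing \emph{spectra} $\sigma\colon[0,1)\to\mathbb{R}_{+}$ with $\int_{0}^{1}\sigma(u)\,\mathrm{d}u=1$ and $\int_{0}^{1}\sigma(u)^{p^{\prime}}\,\mathrm{d}u\le\left(\frac{1}{1-\alpha}\right)^{p^{\prime}-1}$.

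The next step is to set up a bijection between the feasible spectra $\sigma$ and probability measures $\mu$ on $[0,1)$. Given $\mu$, the function $\sigma_{\mu}(u):=\int_{0}^{u}\frac{1}{1-v}\,\mu(\mathrm{d}v)$ is manifestly nondecreasing, and Fubini yields
\begin{equation*}
\int_{0}^{1}\sigma_{\mu}(u)\,\mathrm{d}u=\int_{0}^{1}\frac{1}{1-v}\int_{v}^{1}\mathrm{d}u\,\mu(\mathrm{d}v)=\mu\bigl([0,1)\bigr)=1,
\end{equation*}
so $\sigma_{\mu}$ is indeed a spectrum. Conversely, any nondecreasing $\sigma$ integrating to one arises in this way from the measure $\mu(\mathrm{d}v):=(1-v)\,\mathrm{d}\sigma(v)$ (interpreted in the Stieltjes sense, with any initial jump $\sigma(0+)$ absorbed into a point mass at $0$). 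Under this bijection, the moment constraint on the spectrum translates verbatim into the constraint $\int_{0}^{1}\sigma_{\mu}(u)^{p^{\prime}}\,\mathrm{d}u\le\left(\frac{1}{1-\alpha}\right)^{p^{\prime}-1}$ in the statement.

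The core computation is the Fubini identity converting a spectral integral into a mixture of Average Values-at-Risk. Using $\AVaR_{v}(Y)=\frac{1}{1-v}\int_{v}^{1}F_{Y}^{-1}(u)\,\mathrm{d}u$,
\begin{align*}
\int_{0}^{1}F_{Y}^{-1}(u)\,\sigma_{\mu}(u)\,\mathrm{d}u & =\int_{0}^{1}F_{Y}^{-1}(u)\int_{0}^{u}\frac{\mu(\mathrm{d}v)}{1-v}\,\mathrm{d}u\\
 & =\int_{0}^{1}\frac{1}{1-v}\left(\int_{v}^{1}F_{Y}^{-1}(u)\,\mathrm{d}u\right)\mu(\mathrm{d}v)=\int_{0}^{1}\AVaR_{v}(Y)\,\mu(\mathrm{d}v).
\end{align*}
Combining with the rearrangement reduction gives $\EVaR_{\alpha}^{p}(Y)=\sup_{\mu}\int_{0}^{1}\AVaR_{v}(Y)\,\mu(\mathrm{d}v)$, with the supremum over the stated admissible class.

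For attainment, I would take the explicit optimal density $Z^{*}$ from~\eqref{eq:7-4} (Theorem~\ref{thm:primal1}) or from the construction in the proof of Theorem~\ref{thm:primal2}; its quantile function $\sigma^{*}(u)=F_{Z^{*}}^{-1}(u)=\VaR_{u}(Z^{*})$ is the optimal spectrum, so the measure $\mu_{\sigma^{*}}$ obtained by inverting the bijection above attains the supremum. The most delicate step is the bijection and the equality case of the rearrangement inequality: one must verify that $\mu_{\sigma^{*}}$ is a genuine probability measure on $[0,1)$ (no mass escaping to $1$, which here is guaranteed because $\sigma^{*}$ is bounded for $p>1$ and equals $(t^{*}-Y)^{p-1}/\E(t^{*}-Y)^{p-1}$ for $p<0$), and that the comonotone coupling of $Y$ with $Z^{*}$ is compatible with the reference probability space. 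The standard atomless assumption on $(\Omega,\mathcal{F},P)$, together with the fact that $Z^{*}$ is an explicit monotone transformation of $Y$, resolves this potential obstruction.
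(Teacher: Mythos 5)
Your proposal is correct and follows essentially the same route as the paper: reduce to the supremum representation with the moment constraint, pass to comonotone (spectral) couplings, use the correspondence $\sigma_{\mu}(u)=\int_{0}^{u}\frac{\mu(\mathrm{d}v)}{1-v}$ together with the Fubini identity $\int_{0}^{1}F_{Y}^{-1}\sigma_{\mu}=\int_{0}^{1}\AVaR_{v}(Y)\,\mu(\mathrm{d}v)$, and attain the supremum at $\sigma^{*}=F_{Z^{*}}^{-1}$ via the measure $\mu^{*}=\sigma^{*}(0)\delta_{0}+(1-v)\,\mathrm{d}\sigma^{*}(v)$. You are merely more explicit than the paper about the upper-bound direction (law invariance plus the Hardy\textendash Littlewood rearrangement inequality) and about the spectrum\textendash measure bijection, which the paper leaves implicit in the phrase that the representation ``follows from the supremum representation.''
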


\begin{proof}
The representation follows from the supremum representation~\eqref{eq: EVaRDef1}.
Observe that the supremum is attained and for the maximizing density
$Z^{*}$ in\textbf{~\eqref{eq: EVaRDef1}}
\[
\E\left(Z^{*}\right)^{p^{\prime}}=\left(\frac{1}{1-\alpha}\right)^{p^{\prime}-1}
\]
holds. By definition of $\sigma^{*}$ we have 
\[
\E\left(Z^{*}\right)^{p^{\prime}}=\int_{0}^{1}\left(\sigma^{*}\left(u\right)\right)^{p^{\prime}}\mathrm{d}u.
\]
We define the measure $\mu^{*}\left(A\right)\coloneqq\sigma^{*}\left(0\right)\cdot\delta_{0}\left(A\right)+\int_{A}\left(1-u\right)\mathrm{d}\sigma^{*}\left(u\right)$
for a measurable set $A\subseteq\left[0,1\right)$. For this measure
we have
\[
\int_{0}^{u}\frac{1}{1-x}\mu^{*}\left(\mathrm{d}x\right)=\sigma^{*}\left(0\right)+\int_{0}^{u}\frac{1}{1-v}\left(1-v\right)\mathrm{d}\sigma^{*}\left(v\right)=\sigma^{*}\left(u\right)
\]
and therefore $\sigma^{*}$ is feasible in the above supremum. Furthermore
\begin{align*}
\int_{0}^{1}\AVaR_{x}(Y)\mu^{*}\left(\mathrm{d}x\right) & =\sigma^{*}\left(0\right)\AVaR_{0}(Y)+\int_{0}^{1}\frac{1}{1-x}\int_{x}^{1}F_{Y}^{-1}\left(u\right)\mathrm{d}u\,\left(1-x\right)\mathrm{d}\sigma^{*}\left(x\right)\\
 & =\int_{0}^{1}\sigma^{*}\left(u\right)\,F_{Y}^{-1}\left(u\right)\,\mathrm{d}u\\
 & =\E YZ^{*}=\EVaR_{\alpha}^{p}(Y)
\end{align*}
and hence the assertion follows.
\end{proof}
In this section we derive a computational convenient representation
of the $\EVaR$-dual norms and gave the explicit formulas for their
maximizers. This allowed us to give another supremum representation
for $\EVaR$. We further elaborated on the Kusuoka representation
of $\EVaR$.

\section{\label{sec:Summary}Concluding Remarks}

This paper introduces entropic risk measures specified by a family
of entropies. These risk measures are interesting in stochastic optimization
and its applications as entropy allows the interpretation of information
losses and the corresponding risk measures reflect ambiguity in terms
of lost information. 

\citet{AhmadiJavidEVaR-R} introduces the so-called Entropic Value-at-Risk
as the tightest upper bound for the Value-at-Risk and the Average-Value-at-Risk
using Chernoff's inequality. He expresses the classical Entropic Value-at-Risk
by employing Shannon entropy.

We extend this work to the class of Rényi entropies and we show that
the associated risk measures are monotone and continuous with respect
to the Rényi order. The case studied in \citet{AhmadiJavidEVaR-R}
arises as a special, limiting case. For the Rényi entropy of order
larger than $1$, the Rényi entropic risk measures interpolate the
Average Value-at-Risk and the Entropic Value-at-Risk based on Shannon
entropy. For the Rényi order smaller than $1$ the Entropic Value-at-Risk
based on Rényi entropy dominates the Entropic Value-at-Risk. The essential
supremum is recovered as a limiting case as well.

Most importantly from a viewpoint of stochastic optimization we derive
an equivalent infimum representation of the risk measures~\eqref{eq: EVaRDef1},
where the infimum is considered over a real variable. This allows
a efficient computation of stochastic programs employing these risk
measures based on Rényi entropy.

We further study the norms associated with entropic risk measures
and elaborate the exact constants in comparing them with Hölder norms.
In this way we relate them to higher order risk measures. We further
explicit the formulas of the dual norms and the corresponding Hahn\textendash Banach
functionals. We use the duality results to derive alternative dual
and Kusuoka representations and state the maximizing densities explicitly. 

\section{Acknowledgment}

We wish to thank the editor and the referees of this journal for their
time and commitment in assessing the paper.\bibliographystyle{abbrvnat}
\bibliography{LiteraturRuben}

\appendix

\section{\label{sec:Appendix}Appendix }

We give a proof of Theorem~\ref{thm:log-convex}. For this we recall
the following result first.
\begin{thm}[Envelope Theorem]
Let $f(x,q)$ be a continuously differentiable function with $x\in\mathbb{R}$
and $q\in\mathbb{R}$. Assume that the parametric problem 
\[
v(q):=\max_{x\in\mathbb{R}}\,f(x,q),
\]
admits a continuously differentiable solution $x^{*}(q)$. Then the
optimal value function $v(q)=f(x^{*}(q),q)$ of $f$ has the derivative
\[
\frac{\mathrm{d}v}{\mathrm{d}q}(q)=\frac{\mathrm{d}f}{\mathrm{d}q}(x^{*}(q),q).
\]
 
\end{thm}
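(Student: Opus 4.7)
The plan is to differentiate $v(q)=f(x^{*}(q),q)$ directly by the chain rule and then kill the unwanted term via the first-order optimality condition at the interior maximizer $x^{*}(q)$.

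Concretely, first I would invoke the chain rule. Since $f$ is assumed continuously differentiable in both arguments and $x^{*}(\cdot)$ is differentiable by hypothesis, the composition $q\mapsto f(x^{*}(q),q)$ is differentiable and satisfies
\[
\frac{\mathrm{d}v}{\mathrm{d}q}(q)=\frac{\partial f}{\partial x}\bigl(x^{*}(q),q\bigr)\cdot\frac{\mathrm{d}x^{*}}{\mathrm{d}q}(q)+\frac{\partial f}{\partial q}\bigl(x^{*}(q),q\bigr).
\]
The right-hand side contains the term $\frac{\partial f}{\partial x}(x^{*}(q),q)\cdot\frac{\mathrm{d}x^{*}}{\mathrm{d}q}(q)$ that must be shown to vanish.

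For this, I would use that for each fixed $q$ the point $x^{*}(q)$ is a maximizer of $x\mapsto f(x,q)$ on all of $\mathbb{R}$, which is an open set. Hence $x^{*}(q)$ is an interior critical point and the first-order necessary condition for a maximum yields
\[
\frac{\partial f}{\partial x}\bigl(x^{*}(q),q\bigr)=0
\]
identically in $q$. Substituting this into the chain-rule identity eliminates the first summand and gives
\[
\frac{\mathrm{d}v}{\mathrm{d}q}(q)=\frac{\partial f}{\partial q}\bigl(x^{*}(q),q\bigr),
\]
which is exactly the assertion.

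There is essentially no analytic obstacle here; the only delicate point is that the first-order condition requires the maximizer to lie in the \emph{interior} of the feasible set, which is automatic because the optimization is unconstrained over $\mathbb{R}$. If one wanted to weaken the differentiability of $x^{*}$, a more refined argument bounding $v(q+h)-v(q)$ between $f(x^{*}(q),q+h)-f(x^{*}(q),q)$ and $f(x^{*}(q+h),q+h)-f(x^{*}(q+h),q)$ and then applying the mean value theorem would be needed, but under the stated hypotheses the chain-rule argument above suffices.
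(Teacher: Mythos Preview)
Your argument is correct and is the standard proof of the unconstrained envelope theorem. The paper itself does not prove this statement at all; it merely \emph{recalls} the envelope theorem as a known tool in order to apply it in the subsequent proof of Lemma~\ref{lem:convexEVaR}, so there is no paper proof to compare against. Your chain-rule-plus-first-order-condition derivation is exactly what one would supply if a proof were required.
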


Before we give a proof of Theorem~\ref{thm:log-convex} we show that
$\EVaR_{\alpha}^{p}(Y)$ is convex in its dual order. 
\begin{lem}[Convexity of the Entropic Value-at-Risk]
\label{lem:convexEVaR}For $1<p_{0},\,p_{1}$ and $0\leq\lambda\leq1$
define $p_{\lambda}^{\prime}:=(1-\lambda)p_{0}^{\prime}+\lambda p_{1}^{\prime}$,
where $p_{0}^{\prime}$ ($p_{1}^{\prime}$, resp.) is the Hölder conjugate
exponent of $p_{0}$ ($p_{1}$, resp.). Then, for $Y\in L^{\infty}$,
$\alpha\in(0,1)$ and $p_{\lambda}:=\frac{p_{\lambda}^{\prime}}{p_{\lambda}^{\prime}-1}$
we have that 
\[
\EVaR_{\alpha}^{p_{\lambda}}\left(\left|Y\right|\right)\leq\left(1-\lambda\right)\EVaR_{\alpha}^{p_{0}}\left(\left|Y\right|\right)+\lambda\EVaR_{\alpha}^{p_{1}}\left(\left|Y\right|\right).
\]
This means the Entropic Value-at-Risk is convex in its conjugate order. 
\end{lem}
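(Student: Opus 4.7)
The plan is to start from the infimum representation of Theorem~\ref{thm:primal1}. Writing
\[
g(p') := \EVaR_\alpha^{p'/(p'-1)}(|Y|) \quad \text{and} \quad f(t, p') := t + (1-\alpha)^{-(p'-1)/p'} \left\Vert(|Y|-t)_+\right\Vert_{p'/(p'-1)},
\]
this reads $g(p') = \inf_{t \in \mathbb{R}} f(t, p')$. Since $f$ is smooth and strictly convex in $t$ for fixed $p' \in (1,\infty)$, there is a unique optimizer $t^*(p')$ depending smoothly on $p'$, so the Envelope Theorem recalled at the start of the Appendix gives $g'(p') = \frac{\partial f}{\partial p'}(t^*(p'), p')$.

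Next I would compute $\frac{\partial f}{\partial p'}$ explicitly. Setting $r := p'/(p'-1)$, the derivatives $\frac{dr}{dp'} = -(p'-1)^{-2}$ and $\frac{d}{dp'}\left(\frac{p'-1}{p'}\right) = (p')^{-2}$ govern the chain-rule computation. At $t = t^*(p')$ the first-order condition $\partial_t f(t^*, p') = 0$ is equivalent to
\[
(1-\alpha)^{-1/r}\bigl[\E(|Y|-t^*)_+^{r}\bigr]^{(1-r)/r}\,\E(|Y|-t^*)_+^{r-1} = 1,
\]
which says that the associated density $Z^* = (|Y|-t^*)_+^{r-1}/\E(|Y|-t^*)_+^{r-1}$ from Remark~\ref{rem:7} saturates the entropy constraint, $\|Z^*\|_{p'} = (1-\alpha)^{-1/r}$. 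Using this identity to eliminate the implicit terms, I expect $g'(p')$ to collapse to a compact expression involving the expectation of $\log(|Y|-t^*)_+$ against the power density $W^* := (|Y|-t^*)_+^{r}/\E(|Y|-t^*)_+^{r}$, analogous to formula~\eqref{eq:7} in the proof of Lemma~\ref{lem:R=0000E9nyi-Entropy-Mono}.

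The main obstacle is then the monotonicity of $g'$ in $p'$, i.e., $g''(p') \ge 0$, which will directly imply the claimed convexity $g(p_\lambda') \le (1-\lambda) g(p_0') + \lambda g(p_1')$. The key algebraic lever I intend to exploit is Proposition~\ref{prop:Convexity}, stating that $q \mapsto \log \E X^q$ is convex; applied to $X = (|Y|-t^*)_+$, this convexity should, after accounting for the chain-rule factors above, translate into the desired sign of $g''$. Because the envelope theorem eliminates the first-order contribution of $\frac{dt^*}{dp'}$ to $g'$, the monotonicity question reduces to a moment inequality for the single random variable $(|Y|-t^*(p'))_+$, rather than one that would also depend on the implicit derivative $\frac{dt^*}{dp'}$. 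Verifying this inequality rigorously via Proposition~\ref{prop:Convexity} is the main technical step, and once it is done the conclusion of the lemma is immediate.
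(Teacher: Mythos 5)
Your opening steps coincide with how the paper begins: apply the envelope theorem to the infimum representation~\eqref{eq: EVaRinf1}, use the first-order condition (equivalently, the saturation $\E (Z^*)^{p^{\prime}}=\beta^{p^{\prime}-1}$ of the entropy constraint) to eliminate the implicit terms, and arrive at a compact formula for $g^{\prime}(p^{\prime})$ in terms of the power density of $\left(|Y|-t^{*}\right)_{+}$; this is exactly the paper's formula~\eqref{eq:7-5}. The gap is in your final, decisive step. The envelope theorem removes $\frac{\mathrm{d}t^{*}}{\mathrm{d}p^{\prime}}$ only from the \emph{first} derivative: $g^{\prime}(p^{\prime})=\partial_{p^{\prime}}f\left(t^{*}(p^{\prime}),p^{\prime}\right)$ still contains $t^{*}(p^{\prime})$, and differentiating once more, using $\partial_{t}f(t^{*},p^{\prime})=0$ and the implicit function theorem, gives
\[
g^{\prime\prime}(p^{\prime})=\partial_{p^{\prime}p^{\prime}}^{2}f+\partial_{tp^{\prime}}^{2}f\cdot\frac{\mathrm{d}t^{*}}{\mathrm{d}p^{\prime}}
=\partial_{p^{\prime}p^{\prime}}^{2}f-\frac{\left(\partial_{tp^{\prime}}^{2}f\right)^{2}}{\partial_{tt}^{2}f}\;\le\;\partial_{p^{\prime}p^{\prime}}^{2}f .
\]
Hence establishing the ``moment inequality for the single random variable $\left(|Y|-t^{*}\right)_{+}$,'' i.e.\ $\partial_{p^{\prime}p^{\prime}}^{2}f\ge0$ at the optimum, is not sufficient: the inequality points the wrong way, and indeed a pointwise infimum of functions convex in $p^{\prime}$ need not be convex. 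So any argument that freezes $t^{*}$ and appeals to Proposition~\ref{prop:Convexity} for $X=\left(|Y|-t^{*}\right)_{+}$ cannot close the proof; you must control how the optimizer moves with $p^{\prime}$. (A secondary point: even for fixed $t$, convexity of $p^{\prime}\mapsto\left(\frac{1}{1-\alpha}\right)^{\frac{p^{\prime}-1}{p^{\prime}}}\left\Vert \left(|Y|-t\right)_{+}\right\Vert _{\frac{p^{\prime}}{p^{\prime}-1}}$ does not follow directly from Proposition~\ref{prop:Convexity}, because of the reparametrization $r=\frac{p^{\prime}}{p^{\prime}-1}$, the $\nicefrac{1}{r}$-th root, and the prefactor, whose exponent $\frac{p^{\prime}-1}{p^{\prime}}$ is concave in $p^{\prime}$.)

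This missing control of the $p^{\prime}$-dependence of the optimizer is precisely where the paper's proof spends its effort after reaching the same derivative formula: it factors $g^{\prime}(p^{\prime})$ as $\beta^{\frac{p^{\prime}-1}{p^{\prime}}}\left\Vert \left(Y-x^{*}\right)_{+}\right\Vert _{\frac{p^{\prime}}{p^{\prime}-1}}\left(\frac{1}{p^{\prime}}\log\beta-\frac{\E Z^{p^{\prime}}\log Z}{p^{\prime}\beta^{p^{\prime}-1}}\right)$, shows the first factor is decreasing in $p^{\prime}$ by invoking the monotonicity of $p^{\prime}\mapsto\EVaR_{\alpha}^{p}(Y)$ from Theorem~\ref{thm:Monotonicity} (this is where the movement of $x^{*}(p^{\prime})$ is absorbed), and bounds the second factor using the constraint saturation; a companion lemma showing $p^{\prime}\mapsto x^{*}(p^{\prime})$ is nondecreasing is proved as well and used for the log-convexity theorem. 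To repair your plan you need an ingredient of this type—monotonicity of $\EVaR_{\alpha}^{p}$ in $p^{\prime}$ or of the optimizer $t^{*}(p^{\prime})$—in place of the fixed-$t^{*}$ appeal to Proposition~\ref{prop:Convexity}.
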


\begin{rem}
The following proof of the preceding lemma is rather technical. For
this we describe the procedure in brief first. Using the envelope
theorem we can calculate the $p^{\prime}$-derivative of the infimum
representation~\eqref{eq: EVaRinf1} at the optimal point. Using
the relationship between the optimizer of the infimum and supremum
representations we derive a more useful formula for this derivative.
We conclude the proof by showing that the $p^{\prime}$-derivative
of $\EVaR_{\alpha}^{p}(Y)$ is increasing.
\end{rem}

The next lemma shows that the optimizer of the infimum representation~\eqref{eq: EVaRinf1}
of $\EVaR_{\alpha}^{p}(Y)$ is nondecreasing. The main result then
follows as a simple corollary. 
\begin{proof}[Proof of Lemma~\ref{lem:convexEVaR}]
Let $\alpha\in(0,1)$, $\beta=\frac{1}{1-\alpha}$ and $Y\in L^{\infty}$.
Without loss of generality we may  restrict ourselves to $Y>0$ and
further suppose that $Y$ is not constant. We apply the envelope theorem
to the infimum representation~\eqref{eq: EVaRinf1} of the Entropic
Value-at-Risk (see Theorem~\ref{thm:primal1}). The product rule
yields 
\[
\frac{\mathrm{d}}{\mathrm{d}p^{\prime}}\EVaR_{\alpha}^{p}(Y)=\left(\frac{\mathrm{d}}{\mathrm{d}p^{\prime}}\beta^{\frac{p^{\prime}-1}{p^{\prime}}}\right)\left\Vert \left(Y-x\right)_{+}\right\Vert _{\frac{p^{\prime}}{p^{\prime}-1}}+\beta^{\frac{p^{\prime}-1}{p^{\prime}}}\,\frac{\mathrm{d}}{\mathrm{d}p^{\prime}}\left\Vert \left(Y-x\right)_{+}\right\Vert _{\frac{p^{\prime}}{p^{\prime}-1}}\bigg|_{x=x^{*}(p^{\prime})},
\]
where $p^{\prime}$ is the Hölder conjugate of $p>1$. For the remainder
of the proof we set $x=x^{*}(p^{\prime})$ as it is clear which $p^{\prime}$
is considered. We now detail the above derivative as 
\[
\frac{\mathrm{d}}{\mathrm{d}p^{\prime}}\beta^{\frac{p^{\prime}-1}{p^{\prime}}}=\left(\frac{1}{p^{\prime}}\right)^{2}\beta^{\frac{p^{\prime}-1}{p^{\prime}}}\log\beta
\]
 and 
\begin{align*}
\frac{\mathrm{d}}{\mathrm{d}p^{\prime}}\left\Vert \left(Y-x\right)_{+}\right\Vert _{\frac{p^{\prime}}{p^{\prime}-1}} & =\left\Vert \left(Y-x\right)_{+}\right\Vert _{\frac{p^{\prime}}{p^{\prime}-1}}\cdot\left(\frac{1}{p^{\prime^{2}}}\log\E\left(Y-x\right)_{+}^{\frac{p^{\prime}}{p^{\prime}-1}}-\frac{1}{(p^{\prime}-1)p^{\prime}}\frac{\E\left(Y-x\right)_{+}^{\frac{p^{\prime}}{p^{\prime}-1}}\log\left(Y-x\right)_{+}}{\E\left(Y-x\right)_{+}^{\frac{p^{\prime}}{p^{\prime}-1}}}\right).
\end{align*}
Therefore the derivative of $\EVaR_{\alpha}^{p}$ rewrites as
\begin{align}
\frac{\mathrm{d}}{\mathrm{d}p^{\prime}}\EVaR_{\alpha}^{p}(Y)= & \beta^{\frac{p^{\prime}-1}{p^{\prime}}}\left\Vert \left(Y-x\right)_{+}\right\Vert _{\frac{p^{\prime}}{p^{\prime}-1}}\cdot\nonumber \\
 & \left(\frac{1}{p^{\prime^{2}}}\left(\log\beta+\log\E\left(Y-x\right)_{+}^{\frac{p^{\prime}}{p^{\prime}-1}}\right)-\frac{1}{(p^{\prime}-1)p^{\prime}}\cdot\frac{\E\left(Y-x\right)_{+}^{\frac{p^{\prime}}{p^{\prime}-1}}\log\left(Y-x\right)_{+}}{\E\left(Y-x\right)_{+}^{\frac{p^{\prime}}{p^{\prime}-1}}}\right).\label{eq:derivative RVaR}
\end{align}
We continue by rewriting the last factor. To this end it is useful
to consider the maximizing densities of the supremum representation
of $\EVaR$, cf.~\eqref{eq:7-4},
\begin{equation}
Z=Z^{*}(p^{\prime})=\frac{\left(Y-x^{*}(p^{\prime})\right)_{+}^{\frac{1}{p^{\prime}-1}}}{\E\left(Y-x^{*}(p^{\prime})\right)_{+}^{\frac{1}{p^{\prime}-1}}},\label{eq:18}
\end{equation}
from which the identity 
\begin{equation}
\E\left(Y-x\right)_{+}^{\frac{p^{\prime}}{p^{\prime}-1}}=\E Z^{p^{\prime}}\cdot\left(\E\left(Y-x\right)_{+}^{\frac{1}{p^{\prime}-1}}\right)^{p^{\prime}}\label{eq:17}
\end{equation}
follows. Furthermore, by the convexity of the set of feasible densities
in~\eqref{eq: EVaRDef1}, each optimal $Z=Z^{*}(p^{\prime})$ satisfies
the identity $\frac{1}{p^{\prime}-1}\log\E Z^{p^{\prime}}=\log\beta$.
For the second factor of~\eqref{eq:derivative RVaR} we have 
\begin{align}
\frac{1}{p^{\prime^{2}}}\left(\log\beta+\log\E\left(Y-x\right)_{+}^{\frac{p^{\prime}}{p^{\prime}-1}}\right) & =\frac{1}{p^{\prime}}\left(\log\beta+\log\E\left(Y-x\right)_{+}^{\frac{1}{p^{\prime}-1}}\right),\label{eq:19}
\end{align}
where we have used~\eqref{eq:17} and, by employing~\eqref{eq:18},
\begin{align}
\frac{\E\left(Y-x\right)_{+}^{\frac{p^{\prime}}{p^{\prime}-1}}\log\left(Y-x\right)_{+}}{\E\left(Y-x\right)_{+}^{\frac{p^{\prime}}{p^{\prime}-1}}} & =\frac{(p^{\prime}-1)\E Z^{p^{\prime}}\log Z}{\beta^{p^{\prime}-1}}+(p^{\prime}-1)\log\E\left(Y-x\right)_{+}^{\frac{1}{p^{\prime}-1}}.\label{eq:20}
\end{align}
In conclusion, the factor $\frac{1}{p^{\prime^{2}}}\left(\log\beta+\log\E\left(Y-x\right)_{+}^{\frac{p^{\prime}}{p^{\prime}-1}}\right)-\frac{1}{(p^{\prime}-1)p^{\prime}}\cdot\frac{\E\left(Y-x\right)_{+}^{\frac{p^{\prime}}{p^{\prime}-1}}\log\left(Y-x\right)}{\E\left(Y-x\right)_{+}^{\frac{p^{\prime}}{p^{\prime}-1}}}$
in~\eqref{eq:derivative RVaR} is (cf.~\eqref{eq:19} and~\eqref{eq:20})
\[
\frac{1}{p^{\prime}}\left(\log\beta+\log\E\left(Y-x\right)_{+}^{\frac{1}{p^{\prime}-1}}\right)-\frac{\E Z^{p^{\prime}}\log Z}{p^{\prime}\beta^{p^{\prime}-1}}-\frac{1}{p^{\prime}}\log\E\left(Y-x\right)_{+}^{\frac{1}{p^{\prime}-1}},
\]
which simplifies to 
\[
\frac{1}{p^{\prime}}\log\beta-\frac{\E Z^{p^{\prime}}\log Z}{p^{\prime}\beta^{p^{\prime}-1}}.
\]
From the previous considerations we now have the desired formula for
the $p^{\prime}$-derivative of $\EVaR_{\alpha}^{p}(Y)$: 
\begin{equation}
\frac{\mathrm{d}}{\mathrm{d}p^{\prime}}\EVaR_{\alpha}^{p}(Y)=\beta^{\frac{p^{\prime}-1}{p^{\prime}}}\left\Vert \left(Y-x\right)_{+}\right\Vert _{\frac{p^{\prime}}{p^{\prime}-1}}\left(\frac{1}{p^{\prime}}\log\beta-\frac{\E Z^{p^{\prime}}\log Z}{p^{\prime}\beta^{p^{\prime}-1}}\right).\label{eq:7-5}
\end{equation}
It remains to be seen that this derivative is increasing in $p^{\prime}$,
which is equivalent to showing that 
\[
p^{\prime}\mapsto\beta^{\prime\frac{p^{\prime}-1}{p^{\prime}}}\left\Vert \left(Y-x\right)_{+}\right\Vert _{\frac{p^{\prime}}{p^{\prime}-1}}\left(\frac{\E Z^{p^{\prime}}\log Z}{p^{\prime}\beta^{p^{\prime}-1}}-\frac{1}{p^{\prime}}\log\beta\right)
\]
is decreasing. We first show that $p^{\prime}\mapsto\beta^{\prime\frac{p^{\prime}-1}{p^{\prime}}}\left\Vert \left(Y-x\right)_{+}\right\Vert _{\frac{p^{\prime}}{p^{\prime}-1}}$
is decreasing. From Theorem~\ref{thm:Monotonicity} we know that
$p^{\prime}\mapsto\EVaR_{\alpha}^{p}(Y)$ is decreasing and hence
for the optimizer $x$ of $\EVaR_{\alpha}^{p}(Y)$ we have
\[
\frac{\mathrm{d}}{\mathrm{d}p^{\prime}}\EVaR_{\alpha}^{p}(Y)=\frac{\mathrm{d}}{\mathrm{d}p^{\prime}}\beta^{\prime\frac{p^{\prime}-1}{p^{\prime}}}\left\Vert \left(Y-x\right)_{+}\right\Vert _{\frac{p^{\prime}}{p^{\prime}-1}}<0
\]
and thus $p^{\prime}\mapsto\beta^{\prime\frac{p^{\prime}-1}{p^{\prime}}}\left\Vert \left(Y-x\right)_{+}\right\Vert _{\frac{p^{\prime}}{p^{\prime}-1}}$
is decreasing. 

It remains to show that 
\[
p^{\prime}\mapsto\frac{\E Z^{p^{\prime}}\log Z}{p^{\prime}\beta^{p^{\prime}-1}}-\frac{1}{p^{\prime}}\log\beta
\]
is decreasing. Note that $\E Z^{*}(p^{\prime})^{p^{\prime}}\log Z^{*}(p^{\prime})\leq\E Z^{*}(p^{\prime})^{p^{\prime}+1}\leq\E C(Y)\cdot\left(Z^{*}(p^{\prime}+1)\right)^{p^{\prime}+1}=C(Y)\cdot\beta^{p^{\prime}}$where
$C(Y)\geq1$ is a constant only depending on $Y$. Then
\begin{align*}
\frac{\E Z^{p^{\prime}}\log Z}{p^{\prime}\beta^{p^{\prime}-1}}-\frac{1}{p^{\prime}}\log\beta & \leq\frac{C(Y)\beta^{p^{\prime}}-\beta^{p^{\prime}-1}\log\beta}{p^{\prime}\beta^{p^{\prime}-1}}\leq\frac{C(Y,\beta)}{p^{\prime}},
\end{align*}
where $C(Y,\beta)>0$ is a constant only depending on $Y$ and $\beta$
and thus we conclude that (cf.~\eqref{eq:7-5})
\[
\frac{\mathrm{d}}{\mathrm{d}p^{\prime}}\EVaR_{\alpha}^{p}(Y)=\beta^{\frac{p^{\prime}-1}{p^{\prime}}}\left\Vert \left(Y-x\right)_{+}\right\Vert _{\frac{p^{\prime}}{p^{\prime}-1}}\left(\frac{1}{p^{\prime}}\log\beta-\frac{\E Z^{p^{\prime}}\log Z}{p^{\prime}\beta^{p^{\prime}-1}}\right)
\]
is increasing in $p^{\prime}$, which is equivalent to the convexity
of $p^{\prime}\mapsto\EVaR_{\alpha}^{p}$.
\end{proof}
Before we give the proof of the main Theorem~\ref{thm:log-convex}
we analyze the optimizer of the infimum representation of $\EVaR$. 
\begin{lem}
Let $\alpha\in(0,1)$, $\beta=\frac{1}{1-\alpha}$ and $Y\in L^{\infty}$.
As usual for $p>1$ we set $p^{\prime}=\frac{p}{p-1}$. Consider the
infimum representation of $\EVaR_{\alpha}^{p}(Y)=\inf_{x\in\mathbb{R}}x+\beta^{\frac{1}{p}}\left\Vert \left(Y-x\right)_{+}\right\Vert _{p}$
with optimizer $x^{*}$. Then the mapping $p^{\prime}\mapsto x^{*}(p^{\prime})$
is nondecreasing. 
\end{lem}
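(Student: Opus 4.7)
The plan is to apply the implicit function theorem to the first-order condition characterizing $x^*(p')$ and reduce the sign of the cross partial derivative to an inequality that is a direct consequence of the convexity of the log-moment function $t\mapsto \log\E(Y-x^*)_+^t$.

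For non-degenerate $Y\in L^\infty$ the objective $g(x,p'):= x + \beta^{1/p}\|(Y-x)_+\|_p$ (with $p = p'/(p'-1)$) is strictly convex in $x$, so $x^*(p')$ is the unique zero of the first-order condition $F(x,p'):= \partial g/\partial x = 0$. A direct computation writes $F = 1-\Psi$ with
\[
\Psi(x,p) := \beta^{1/p}\bigl(\E(Y-x)_+^p\bigr)^{(1-p)/p}\,\E(Y-x)_+^{p-1}.
\]
Introducing the function $L(t):= \log\E(Y-x^*)_+^t$, the condition $\Psi(x^*,p) = 1$ is equivalent to
\[
(p-1)L(p) - pL(p-1) = \log\beta,
\]
which is also the identity $\E(Z^*)^{p'} = \beta^{p'-1}$ for the extremal density $Z^*\propto (Y-x^*)_+^{p-1}$ derived in the proof of Lemma~\ref{lem:convexEVaR}. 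Strict convexity of $g$ gives $F_x > 0$, and since $dp/dp' = -(p'-1)^{-2} < 0$, the implicit function theorem reduces the claim $dx^*/dp' \ge 0$ to $\partial_p\Psi(x^*,p) \le 0$, i.e.\ (using $\Psi(x^*,p)=1$) to $\partial_p\log\Psi(x^*,p) \le 0$.

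The product rule applied to $\log\Psi = (\log\beta)/p + (1-p)L(p)/p + L(p-1)$ gives
\[
\partial_p\log\Psi(x^*,p) = -\frac{\log\beta + L(p)}{p^2} + \frac{1-p}{p}L'(p) + L'(p-1).
\]
Multiplying by $p^2>0$ and using the first-order condition in the form $\log\beta + L(p) = p\bigl[L(p)-L(p-1)\bigr]$, the inequality $\partial_p\log\Psi \le 0$ simplifies to
\[
pL'(p-1) - (p-1)L'(p) \le L(p) - L(p-1).
\]
By H\"older's inequality $L$ is convex on the interior of its domain, so $L'$ is nondecreasing; hence
\[
pL'(p-1) - (p-1)L'(p) = L'(p-1) - (p-1)\bigl[L'(p)-L'(p-1)\bigr] \le L'(p-1) \le \int_{p-1}^p L'(t)\,dt = L(p)-L(p-1),
\]
which closes the argument.

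The main technical obstacle is the symbolic simplification in the reduction step: recognizing that the first-order condition collapses $\log\beta + L(p)$ into $p[L(p)-L(p-1)]$, so that the cross-derivative inequality becomes an immediate consequence of the convexity of the log-moment function $L$, without needing any finer comparison between the two power-weighted densities associated with $(Y-x^*)_+^p$ and $(Y-x^*)_+^{p-1}$.
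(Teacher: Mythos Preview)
Your proof is correct and follows essentially the same strategy as the paper's: both reduce the monotonicity of $x^*(p')$ to a sign condition on the $p$-derivative of the first-order quantity $\Psi$ (equivalently, of the norm ratio $\bigl(\|(Y-x)_+\|_{p-1}/\|(Y-x)_+\|_p\bigr)^{p(p-1)}$) and then appeal to the convexity of the log-moment function $t\mapsto\log\E(Y-x)_+^t$. Your version is more explicit at the final step, deriving the inequality $pL'(p-1)-(p-1)L'(p)\le L(p)-L(p-1)$ directly from convexity of $L$, whereas the paper records the same conclusion only as a consequence of ``monotonicity of the $p$-norms and logarithmic convexity of $p'\mapsto\|\cdot\|_p$''.
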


\begin{proof}
We may restrict ourselves to $Y>0$ and recall that the derivative
of $\EVaR_{\alpha}^{p}(Y)$ with respect to $x$ is given by 
\[
1-\beta^{\frac{1}{p}}\left(\E\left(Y-x^{*}(p^{\prime})\right)_{+}^{p}\right)^{\frac{1}{p}-1}\E\left(Y-x^{*}(p^{\prime})\right)_{+}^{p-1}
\]
whenever the derivative exists. It follows that $p^{\prime}\mapsto x^{*}(p^{\prime})$
is nondecreasing in $p^{\prime}$ if and only if $1-\beta^{\frac{p^{\prime}-1}{p^{\prime}}}\left(\E\left(Y-x^{*}(p_{0}^{\prime})\right)_{+}^{\frac{p^{\prime}}{p^{\prime}-1}}\right)^{-\frac{1}{p^{\prime}}}\E\left(Y-x^{*}(p_{0}^{\prime})\right)_{+}^{\frac{1}{p^{\prime}-1}}<0$
for all $p^{\prime}>p_{0}^{\prime}$. By convexity of~\eqref{eq: EVaRinf1}
there is a unique optimal $x^{*}(p^{\prime})$ for any $p^{\prime}>1$.
Furthermore the optimizer $x^{*}(p^{\prime})$ is characterized by
\[
\beta\left(\frac{\left\Vert \left(Y-x\right)_{+}\right\Vert _{\frac{1}{p^{\prime}-1}}}{\left\Vert \left(Y-x\right)_{+}\right\Vert _{\frac{p^{\prime}}{p^{\prime}-1}}}\right)^{\frac{p^{\prime}}{(p^{\prime}-1)^{2}}}=1.
\]
It is enough to verify the that $p^{\prime}\mapsto\left(\frac{\left\Vert \left(Y-x\right)_{+}\right\Vert _{\frac{1}{p^{\prime}-1}}}{\left\Vert \left(Y-x\right)_{+}\right\Vert _{\frac{p^{\prime}}{p^{\prime}-1}}}\right)^{\frac{p^{\prime}}{(p^{\prime}-1)^{2}}}$
is nondecreasing for fixed $x<\esssup Y$. As the logarithm is a strictly
increasing function, we may consider the mapping $p^{\prime}\mapsto\log\left(\frac{\left\Vert \left(Y-x\right)_{+}\right\Vert _{\frac{1}{p^{\prime}-1}}}{\left\Vert \left(Y-x\right)_{+}\right\Vert _{\frac{p^{\prime}}{p^{\prime}-1}}}\right)^{\frac{p^{\prime}}{(p^{\prime}-1)^{2}}}$
with $p^{\prime}$-derivative
\[
\left(\frac{\mathrm{d}}{\mathrm{d}p^{\prime}}\frac{p^{\prime}}{(p^{\prime}-1)^{2}}\right)\cdot\log\frac{\left\Vert \left(Y-x\right)_{+}\right\Vert _{\frac{1}{p^{\prime}-1}}}{\left\Vert \left(Y-x\right)_{+}\right\Vert _{\frac{p^{\prime}}{p^{\prime}-1}}}+\frac{p^{\prime}}{(p^{\prime}-1)^{2}}\cdot\frac{\mathrm{d}}{\mathrm{d}p^{\prime}}\log\frac{\left\Vert \left(Y-x\right)_{+}\right\Vert _{\frac{1}{p^{\prime}-1}}}{\left\Vert \left(Y-x\right)_{+}\right\Vert _{\frac{p^{\prime}}{p^{\prime}-1}}}.
\]
By monotonicity of the $p$-Norms and by logarithmic convexity of
$p^{\prime}\mapsto\left\Vert \cdot\right\Vert _{p}$ we may conclude
that 
\[
\frac{\mathrm{d}}{\mathrm{d}p^{\prime}}\log\left(\frac{\left\Vert \left(Y-x\right)_{+}\right\Vert _{p-1}}{\left\Vert \left(Y-x\right)_{+}\right\Vert _{p}}\right)^{p(p-1)}>0.
\]
It follows immediately that $p^{\prime}\mapsto1-\beta^{\frac{1}{p}}\left(\E\left(Y-x\right)_{+}^{p}\right)^{\frac{1}{p}-1}\E\left(Y-x\right)_{+}^{p-1}$
is decreasing and thus 
\[
1-\beta^{\frac{1}{p}}\left(\E\left(Y-x^{*}(p_{0}^{\prime})\right)_{+}^{p}\right)^{\frac{1}{p}-1}\E\left(Y-x^{*}(p_{0}^{\prime})\right)_{+}^{p-1}<0
\]
for all $p^{\prime}>p_{0}^{\prime}$.
\end{proof}
The proof of Theorem~\ref{thm:log-convex} now follows as a simple
corollary of the preceding 2 lemmas.
\begin{proof}[Proof of Theorem~\ref{thm:log-convex}]
Suppose the assumptions of Lemma~\ref{lem:convexEVaR} are satisfied.
With out loss of generality we may assume that $Y>0$ and that $Y$
is not constant. We then consider the derivative with respect to $p^{\prime}$
using the envelope theorem and obtain
\begin{align*}
\frac{\mathrm{d}}{\mathrm{d}p^{\prime}}\log\EVaR_{\alpha}^{p}(Y) & =\frac{\beta^{\frac{p^{\prime}-1}{p^{\prime}}}\left\Vert \left(Y-x^{*}(p^{\prime})\right)_{+}\right\Vert _{\frac{p^{\prime}}{p^{\prime}-1}}\left(\frac{1}{p^{\prime}}\log\beta-\frac{\E Z^{p^{\prime}}\log Z}{p^{\prime}\beta^{p^{\prime}-1}}\right)}{x^{*}(p^{\prime})+\beta^{\frac{p^{\prime}-1}{p^{\prime}}}\left\Vert \left(Y-x^{*}(p^{\prime})\right)_{+}\right\Vert _{\frac{p^{\prime}}{p^{\prime}-1}}}.
\end{align*}
We define $C=x^{*}\cdot\left(\beta^{\frac{p^{\prime}-1}{p^{\prime}}}\left\Vert \left(Y-x^{*}\right)_{+}\right\Vert _{\frac{p^{\prime}}{p^{\prime}-1}}\right)^{-1}+1>0$,
from which it follows that 
\[
\frac{\mathrm{d}}{\mathrm{d}p^{\prime}}\log\EVaR_{\alpha}^{p}(Y)=\frac{\left(\frac{1}{p^{\prime}}\log\beta-\frac{\E Z^{p^{\prime}}\log Z}{p^{\prime}\beta^{p^{\prime}-1}}\right)}{C}.
\]
It remains to show that $p^{\prime}\mapsto C(p^{\prime})$ is increasing.
Since $p^{\prime}\mapsto x^{*}$ is increasing it suffices to show
that $x^{*}\mapsto C(x^{*})$ is increasing. To see this we differentiate
$C(x^{*}(p^{\prime}))$ with respect to $x$, 
\begin{align*}
\frac{\mathrm{d}}{\mathrm{d}x}C(x^{*}(p^{\prime})) & =\beta^{\frac{p^{\prime}-1}{p^{\prime}}}\left\Vert \left(Y-x^{*}\right)_{+}\right\Vert _{\frac{p^{\prime}}{p^{\prime}-1}}+x^{*}(p^{\prime})\cdot\left(\beta^{\frac{p^{\prime}-1}{p^{\prime}}}\left(\E\left(Y-x^{*}\right)_{+}^{p}\right)^{\frac{1}{p}-1}\E\left(Y-x^{*}\right)_{+}^{p-1}\right)\\
 & =\beta^{\frac{p^{\prime}-1}{p^{\prime}}}\left\Vert \left(Y-x^{*}\right)_{+}\right\Vert _{\frac{p^{\prime}}{p^{\prime}-1}}+x^{*}(p^{\prime})\geq0.
\end{align*}
We conclude that $p^{\prime}\mapsto C(Y,\alpha,p^{\prime})$ is increasing
and by the proof of Lemma~\ref{lem:convexEVaR} it now follows that
\[
p^{\prime}\mapsto\frac{\left(\frac{\E Z^{p^{\prime}}\log Z}{p^{\prime}\beta^{p^{\prime}-1}}-\frac{1}{p^{\prime}}\log\beta\right)}{C(Y,\alpha,p^{\prime})}
\]
is decreasing which concludes the assertion.
\end{proof}

\end{document}